\DeclareMathOperator{\ord}{ord}
\DeclareMathOperator{\dime}{dim}
\DeclareMathOperator{\pideg}{PI-deg}
\DeclareMathOperator{\cf}{Fract}
\DeclareMathOperator{\gcdi}{gcd}
\DeclareMathOperator{\lcmu}{lcm}
\DeclareMathOperator{\diagonal}{diag}
\DeclareMathOperator{\tors}{tor}
\DeclareMathOperator{\colu}{col}
\DeclareMathOperator{\rowa}{row}
\DeclareMathOperator{\kere}{ker}
\DeclareMathOperator{\ran}{rank}
\DeclareMathOperator{\spect}{Spec}
\DeclareMathOperator{\mspect}{Maxspec}
\numberwithin{equation}{section}
\newtheorem{theo}{Theorem}[section]
\newtheorem{defi}[theo]{Definition}
\newtheorem{lemm}[theo]{Lemma}
\newtheorem{rema}[theo]{Remark}
\newtheorem{coro}[theo]{Corollary}
\newtheorem{prop}[theo]{Proposition}
\begin{document}

\setcounter{page}{1} 
\baselineskip .65cm 
\pagenumbering{arabic}

\title[PI Quantized Weyl Algebras]{Simple Modules and Azumaya Loci over \\the PI quantized Weyl Algebras}
\author [Sanu Bera~ And~ Snehashis Mukherjee]{Sanu Bera$^1$ \and Snehashis Mukherjee$^2$}

\address {\newline Sanu Bera$^1$ and Snehashis Mukherjee$^2$
\newline School of Mathematical Sciences, \newline Ramakrishna Mission Vivekananda Educational and Research Institute (rkmveri), \newline Belur Math, Howrah, Box: 711202, West Bengal, India.
 }
\email{\href{mailto:sanubera6575@gmail.com}{sanubera6575@gmail.com$^1$};\href{mailto:tutunsnehashis@gmail.com}{tutunsnehashis@gmail.com$^2$}}

\subjclass[2020]{16D60, 16D70, 16S85}
\keywords{Quantum Weyl Algebra, Simple modules, Polynomial Identity algebra}
\begin{abstract}
In this article, both versions of multiparameter quantum Weyl algebras have been studied at the roots of unity. The center, PI degree, maximal-dimensional simple modules, and Azumaya locus have been explicitly computed for such algebras.
\end{abstract}
\maketitle
\section{{Introduction}}
Let $\mathbb{K}$ be a field and $\mathbb{K}^*$ denotes the multiplicative group of non-zero elements of $\mathbb{K}$ and $n$ is a positive integer. Let $\Lambda:=\left(\lambda_{ij}\right)_{n \times n}$ be an  $n \times n$ multiplicatively antisymmetric matrix over $\mathbb{K}$, that is, $ \lambda_{ii}=1$ and $\lambda_{ij}\lambda_{ji}=1$ for all $1 \leq i,j\leq n$ and and let $\underline{q}:=(q_1,\cdots,q_n)$ be an $n$-tuple elements of $\mathbb{K}\setminus\{0,1\}$. Given such $\Lambda$ and $\underline{q}$, the Maltsiniotis multiparameter quantized Weyl algebra ${A_n^{\underline{q},\Lambda}}$, called {\it quantum Weyl algebra}, is the algebra generated over the field $\mathbb{K}$ by the variables $x_1,y_1,\cdots,x_n,y_n$ subject to the following relations: 
\begin{align*}
y_iy_j&=\lambda_{ij}y_jy_i \ \ \ \ \ \forall \ \ \ 1\leq i<j\leq n,\\
x_ix_j&=q_i\lambda_{ij}x_jx_i \ \ \ \ \ \forall \ \ \ 1\leq i<j\leq n,\\
x_iy_j&=\lambda_{ij}^{-1}y_jx_i\ \ \ \ \ \forall \ \ \ 1\leq i<j\leq n, \\
y_ix_j&=q_i^{-1}\lambda_{ij}^{-1}x_jy_i \ \ \ \ \ \forall \ \ \ 1\leq i<j\leq n,\\
x_iy_i-q_iy_ix_i&=1+\sum_{k=1}^{i-1}(q_k-1)y_kx_k \ \ \ \ \ \forall \ \ \ 1\leq i\leq n.
\end{align*}
The quantum Weyl algebra ${A_n^{\underline{q},\Lambda}}$, which arises from the work of Maltsiniotis \cite{gm} on noncommutative differential calculus, has been extensively studied in \cite{ad,gz,krg,daj}. When $n=1$, ${A_n^{\underline{q},\Lambda}}$ is a rank one quantized Weyl algebra $A_1^{q}$, whose prime ideals are classified in \cite{krg}. The automorphism group of $A_1^{q}$ was determined in \cite{ad}.  The prime spectrum and the automorphism group for ${A_n^{\underline{q},\Lambda}}$ in the generic case were studied in \cite{lr}. The isomorphism problem for ${A_n^{\underline{q},\Lambda}}$ is solved in \cite{gh2}.  
\par Another family of multiparameter quantized Weyl algebras has been studied in the literature, including \cite{aj}, which has more symmetric defining relations than those of ${{A}_n^{\underline{q},\Lambda}}$. The quantized Weyl algebra of symmetric type is denoted by ${\mathcal{A}_n^{\underline{q},\Lambda}}$ and is referred to as an alternative quantum Weyl algebra. The definition of the algebra ${\mathcal{A}_n^{\underline{q},\Lambda}}$ in terms of generators and relations has been mentioned in Section 7. 
\par We note that ${\mathcal{A}_n^{\underline{q},\Lambda}}$ and ${A_n^{\underline{q},\Lambda}}$ are closely related in many aspects \cite{aj, daj}. Both algebras have some iterated skew polynomial presentation twisted by automorphisms and derivations and also have a common localization. The prime ideals of ${\mathcal{A}_n^{\underline{q},\Lambda}}$ were classified in generic case \cite{aj}. When $\lambda_{ij}=1$, the algebra ${\mathcal{A}_n^{\underline{q},\Lambda}}$ is isomorphic to the tensor product $A_1^{q_1}\otimes\cdots \otimes A_1^{q_n}$, whose automorphism group has been studied in \cite{cpw} for $q_i\neq 1$. The automorphism group and isomorphism problem for ${\mathcal{A}_n^{\underline{q},\Lambda}}$ in the generic case have been studied in \cite{xt}. 
\par Most of these results concern the generic case when the algebras are not polynomial identity. In this article, we shall focus on quantized Weyl algebras when they are PI algebras (equivalently, defining parameters are roots of unity).  
\subsection*{Assumptions:}In the context of the root of unity setting, throughout this chapter we will be assuming the following assumption regarding the defining multiparameters ${q_i}$ and $\lambda_{ij}$ as follows:
\begin{equation}\label{asm}
\begin{minipage}{0.9\textwidth}
\begin{itemize}
    \item[(1)] $q_i$'s are primitive $l_i$-th root of unity for $1\leq i \leq n$.
    \item[(2)] $\lambda_{ij}$'s are $l_i$-th roots of unity for all $1\leq i\leq j\leq n$. 
    \item[(3)] the divisibility condition $l_1|l_2|\cdots|l_n$.
    \end{itemize}
\end{minipage} \tag{$\star$}
\end{equation}
As $q_i\in \mathbb{K}\setminus\{0,1\}$, it follows that $l_i\geq 2$ for all $1\leq i\leq n$. The assumptions (\ref{asm}) are satisfied in the important uniparameter cases:
\begin{itemize}
    \item [(A)] when $q_1=\cdots=q_n$ and $\lambda_{ij}=1$ (cf. \cite{bry}).
    \item [(B)] when $q_i=q^2$, $\lambda_{ij}=q^{-1}$ for $1\leq i<j\leq n$ and $\ord(q)$ is odd (cf. \cite{cg,ig}).
\end{itemize} 
If $q$ is a root of unity, say of order $l$, then the first quantum Weyl algebra $A_1^{q}$ is PI algebra with $\pideg(A_1^{q})=l$ and hence all irreducible representations of $A_1^{q}$ are finite dimensional and explicitly described up to equivalence in two research projects directed by E. Letzter \cite{blt} and by L. Wang \cite{he} via studying the matrix solutions $(X,Y)$ of the equation $xy-qyx=1$. To date, there has been no comprehensive classification of simple modules over the PI quantized Weyl algebras for $n\geq 2$. Moreover, Chelsea Walton posed a problem in \cite[Problem 2]{cw} concerning the explicit classification of irreducible representations of the quantum Weyl algebras in the uniparameter case (B), up to equivalence. In this chapter, our objective is to compute the PI degrees, classify all maximal dimensional simple modules and consequently determine the Azumaya locus for the multiparameter quantized Weyl algebras ${A_n^{\underline{q},\Lambda}}$ and ${\mathcal{A}_n^{\underline{q},\Lambda}}$ at roots of unity assuming (\ref{asm}). In particular, these findings provide a partial solution to the problem stated in \cite[Problem 2]{cw}. Throughout this chapter $\mathbb{K}$ is an algebraically closed field of arbitrary characteristics and all modules are the right modules.
\subsection*{Arrangement:} The paper is organized as follows. In Section $2$, we discuss some necessary facts for quantum Weyl algebra and the theory of Polynomial Identity algebras. Additionally, we present a key technique for calculating the PI degree of quantum affine space and the derivation erasing process, which remains independent of characteristics.
In Section $3$, we compute the explicit expression of PI degree for quantum Weyl algebra and some of its prime factors, under the assumptions (\ref{asm}). Moving on to Section 4, we present a classification of all maximal-dimensional simple modules over quantum Weyl algebra. Consequently, in Sections $5$ and $6$, we compute the center and the Azumaya locus for quantum Weyl algebra respectively. Finally, in Section 7, we extend our analysis to alternative quantum Weyl algebra, obtaining analogous results.
\section{{Preliminaries}}\label{pr}
In this section, we begin by revisiting essential results related to the quantum Weyl algebra. Subsequently, we recall relevant information about the Polynomial Identity algebra and the process of deleting derivation, enabling us to comment on the $\mathbb{K}$-dimension of simple modules over both quantized Weyl algebras at roots of unity. Throughout this paper, $\mathbb{K}$ will denote an algebraically closed field, and all modules under consideration shall be the right modules.
\subsection{Torsion and Torsionfree modules} 
Let $A$ be an algebra and $M$ be a right $A$-module and $S\subset A$ be a right Ore set. The submodule
\[\tors_{S}(M):=\{m\in M:ms=0\ \text{for some}\  s\in S\}\]
is called the $S$-torsion submodule of $M$. The module $M$ is said to be $S$-torsion if $\tors_{S}(M)=M$ and $S$-torsionfree if $\tors_{S}(M)=0$. If Ore set $S$ is generated by $x\in A$, we simply say that the $S$-torsion/torsionfree module $M$ is $x$-torsion/torsionfree.
\par A non zero element $x$ of an algebra ${A}$ is called a normal element if $x{A}={A}x$. Clearly, if $x$ is a normal element of $A$, then the set $\{x^i:~i\geq 0\}$ is an Ore set generated by $x$. The next lemma is obvious.
\begin{lemm}\label{itn}
Suppose that $A$ is an algebra, $x$ is a normal element of $A$ and $M$ is a simple $A$-module. Then either $Mx=0$ (if $M$ is $x$-torsion) or the map \[x_{M}:M\rightarrow M\  \text{given by}\  m\mapsto mx\] is an isomorphism (if $M$ is $x$-torsionfree).
\end{lemm}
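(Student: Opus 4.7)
The plan is to exploit the normality of $x$ to show that both the image $Mx$ and the kernel of the multiplication map $x_M$ are $A$-submodules of $M$, and then invoke simplicity to rule out intermediate cases. The whole argument is a short dichotomy once we check that these two subsets are indeed submodules, which is what the normality condition $xA = Ax$ buys us.

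First I would verify that $Mx$ is an $A$-submodule of $M$. For any $m \in M$ and $a \in A$, normality of $x$ yields some $a' \in A$ with $xa = a'x$, hence $(mx)a = m(xa) = m(a'x) = (ma')x \in Mx$. Since $M$ is simple, $Mx$ must equal either $0$ or $M$. In the first case $M$ is $x$-torsion and we are done. In the second case, the map $x_M : m \mapsto mx$ is surjective, and it remains to show injectivity under the $x$-torsionfree assumption. For this I would check that $\ker(x_M) = \{m \in M : mx = 0\}$ is an $A$-submodule: if $mx = 0$ and $a \in A$, then using $ax = xa''$ for some $a'' \in A$ (again by normality) gives $(ma)x = m(ax) = m(xa'') = (mx)a'' = 0$. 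By simplicity, $\ker(x_M)$ is $0$ or $M$; the latter would force $Mx = 0$, contradicting the $x$-torsionfree hypothesis, so $x_M$ is injective and thus an isomorphism.

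There is no real obstacle here; the only subtle point is remembering to apply normality in both directions, namely $xA \subseteq Ax$ to show $Mx$ is a submodule, and $Ax \subseteq xA$ to show $\ker(x_M)$ is a submodule. The dichotomy between torsion and torsionfree is then an immediate consequence of $M$ being simple, so the lemma indeed deserves the authors' description as obvious.
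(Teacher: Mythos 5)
Your argument is correct and complete: both uses of normality ($xA\subseteq Ax$ for the image, $Ax\subseteq xA$ for the kernel) are exactly what is needed, and simplicity then forces the stated dichotomy. The paper itself offers no proof (it declares the lemma obvious), and your verification is precisely the routine argument the authors leave implicit.
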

The above lemma says that the action of a normal element on a simple module is either trivial or invertible.
\subsection{Quantum Weyl Algebras}
The quantum Weyl algebra ${{A}_n^{\underline{q},\Lambda}}$ has an iterated skew polynomial presentation with respect to the ordering of variables $y_1,x_1,y_2,x_2,\cdots,y_n,x_n$ of the form: 
$$\mathbb{K}[y_1][x_1,\tau_1,\delta_1][y_2,\sigma_2][x_2,\tau_2,\delta_2]\cdots [y_n,\sigma_n][x_n,\tau_n.\delta_n],$$
where the $\tau_j$ and $\sigma_{j}$ are $\mathbb{K}$-linear automorphisms and the $\delta_j$ are $\mathbb{K}$-linear $\tau_j$-derivations  such that
\begin{align*}
  \tau_j(y_i)&=q_i\lambda_{ij}y_i,\ i<j & \sigma_j(y_i)&=\lambda_{ij}^{-1}y_i,\ i<j\\
  \tau_j(x_i)&=q_i^{-1}\lambda_{ij}^{-1}x_i,\ i<j& \sigma_j(x_i)&=\lambda_{ij}x_i,\ i<j\\
  \tau_j(y_j)&=q_jy_j,\ \forall~j & \delta_j(x_i)&=\delta_j(y_i)=0,\ i<j\\
  \delta_j(y_j)&=1+\sum\limits_{i<j}(q_i-1)y_ix_i,\ \forall ~j&&
\end{align*}
This observation along with the skew Hilbert Basis Theorem (cf. \cite[Theorem 2.9]{mcr}) yields that the algebra ${A_n^{\underline{q},\Lambda}}$ is an affine noetherian domain. Moreover, the family of ordered monomials
\begin{equation}\label{kbasis}
    \{y_1^{a_1}x_1^{b_1}\cdots y_n^{a_n}x_n^{b_n}~:~a_i,b_i\in \mathbb{Z}_{\geq 0}\}
\end{equation} is a $\mathbb{K}$-basis of ${A_n^{\underline{q},\Lambda}}$. 
Let us define $z_0:=1$ and 
\[z_i:=x_iy_i-y_ix_i\ \ \text{for}\ \  1\leq i\leq n.\] 
These elements are going to play a crucial role throughout this chapter. It is easy to verify that for all $1\leq i\leq n$,
\[z_i=1+\sum_{j=1}^i(q_j-1)y_jx_j=q_i^{-1}\left(1+\sum_{j=1}^i(q_j-1)x_jy_j\right)=z_{i-1}+(q_i-1)y_ix_i.\]
Therefore the last of the listed defining relations for ${A_n^{\underline{q},\Lambda}}$ can be rewritten as
\[x_iy_i-q_iy_ix_i=z_{i-1},\ \ \forall \ \ 1\leq i\leq n.\] \begin{prop}\emph{(\cite{aj})}\label{l1}
Direct computations yield the following results for ${A_n^{\underline{q},\Lambda}}$.
\begin{enumerate}
\item For all $1\leq i\leq n$, $z_i$ is a normal element of ${A_n^{\underline{q},\Lambda}}$. More precisely, we have: 
\begin{itemize}
    \item[(a)]  For all $i,j$ with $1\leq i<j\leq n$, $z_ix_j=x_jz_i$ and $z_iy_j=y_jz_i$ 
    \item[(b)]  For all $i,j$ with $1\leq j\leq i\leq n$, $z_ix_j=q_j^{-1}x_jz_i$ and $z_iy_j=q_jy_jz_i$
    \item[(c)] For all $i,j$ with $1\leq i,j\leq n$, $z_iz_j=z_jz_i$.
\end{itemize}
\item The following identities hold in the algebra ${A_n^{\underline{q},\Lambda}}$, for $k\geq 1$:
\begin{itemize}
    \item[(a)] $x_i^ky_i=q_i^ky_ix_i^k+\left(1+q_i+\cdots+q_i^{k-1}\right)z_{i-1}x_i^{k-1}$,
    \item[(b)] $x_iy_i^k=q_i^ky_i^kx_i+\left(1+q_i+\cdots+q_i^{k-1}\right)z_{i-1}y_i^{k-1}$. 
\end{itemize}
\end{enumerate}
\end{prop}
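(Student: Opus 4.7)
The plan is to verify all the claims by direct computation, exploiting the closed-form expression $z_i = 1+\sum_{k=1}^{i}(q_k-1)y_kx_k$ for Part~1 and induction on $k$ for Part~2. For Part~1(a), since $z_i-1$ is a linear combination of terms $y_kx_k$ with $k\leq i<j$, I would show that each such summand commutes with $x_j$ (and, symmetrically, with $y_j$). A short calculation using $x_kx_j=q_k\lambda_{kj}x_jx_k$ and $y_kx_j=q_k^{-1}\lambda_{kj}^{-1}x_jy_k$ gives $y_kx_kx_j=x_jy_kx_k$, with the $\lambda$-factors cancelling cleanly; the analogous relations involving $y_j$ handle the second commutation. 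Part~1(c) then follows at once from (a): for $i<j$, $z_i$ commutes with each of $x_j,y_j$, hence with $z_j=x_jy_j-y_jx_j$.

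For Part~1(b), with $j\leq i$, I would split the sum $z_i-1$ according to whether $k<j$, $k=j$, or $k>j$. The case $k<j$ proceeds as in (a) and contributes the block $(1-q_j^{-1})x_j(z_{j-1}-1)$ to $z_ix_j-q_j^{-1}x_jz_i$. For $k=j$, the defining relation $x_jy_j-q_jy_jx_j=z_{j-1}$ lets me rewrite $(q_j-1)y_jx_j\cdot x_j$, producing a residual $-q_j^{-1}z_{j-1}x_j$ term. For $k>j$, the relations $x_kx_j=q_j^{-1}\lambda_{jk}^{-1}x_jx_k$ and $y_kx_j=\lambda_{jk}x_jy_k$ (derived from the defining relations indexed by the smaller index) combine to $y_kx_kx_j=q_j^{-1}x_jy_kx_k$, so those summands already carry the correct scalar. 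Summing the three contributions and simplifying via $z_{j-1}x_j=x_jz_{j-1}$ (a special case of (a)) should yield precisely $z_ix_j=q_j^{-1}x_jz_i$; the parallel argument handles $z_iy_j=q_jy_jz_i$.

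Part~2 is a routine induction on $k$, with the base case $k=1$ being the defining relation $x_iy_i-q_iy_ix_i=z_{i-1}$. For the inductive step of (a), multiply the hypothesis on the left by $x_i$, apply the base case to the $x_iy_i$ factor, and use $x_iz_{i-1}=z_{i-1}x_i$ (Part~1(a) with $i-1<i$) to pull $z_{i-1}$ past $x_i$; the coefficients then combine as $q_i^k+(1+q_i+\cdots+q_i^{k-1})=1+q_i+\cdots+q_i^k$. The argument for (b) is entirely parallel, relying on $z_{i-1}y_i=y_iz_{i-1}$. The main obstacle is the bookkeeping in Part~1(b): the precise form of the commutation relations between $(y_k,x_k)$ and $(y_j,x_j)$ depends on the relative order of $k$ and $j$, and one must verify carefully that all the $\lambda$-factors conspire to cancel and leave exactly the scalar $q_j^{-1}$ (respectively $q_j$) in front.
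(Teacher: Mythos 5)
Your proposal is correct and takes essentially the same route as the paper, which offers no argument of its own beyond attributing these identities to direct computation in \cite{aj}; your verification (splitting $z_i-1=\sum_{k\le i}(q_k-1)y_kx_k$ by the position of $k$ relative to $j$, then induction for Part~2) is exactly that computation carried out. One minor bookkeeping point: the residual produced by the $k=j$ term is $-(q_j-1)q_j^{-1}z_{j-1}x_j=-(1-q_j^{-1})z_{j-1}x_j$ rather than $-q_j^{-1}z_{j-1}x_j$, and it is precisely this coefficient that cancels the $(1-q_j^{-1})x_jz_{j-1}$ coming from the constant and $k<j$ terms once you invoke $z_{j-1}x_j=x_jz_{j-1}$.
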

\par In Section 7, we have presented all the necessary information and essential properties regarding the alternative quantum Weyl algebra ${\mathcal{A}_n^{\underline{q},\Lambda}}$. It is important to note that ${\mathcal{A}_n^{\underline{q},\Lambda}}$ and ${{A}_n^{\underline{q},\Lambda}}$ share a common localization, which plays a crucial role in studying ${\mathcal{A}_n^{\underline{q},\Lambda}}$.
\subsection{Polynomial Identity Algebras}
In this subsection, we recall some known facts concerning Polynomial Identity algebra that we shall be applying on (both) quantized Weyl algebras at roots of unity for further development. 
\par A ring $R$ is said to be a polynomial identity (PI) ring if $R$ satisfies some monic polynomial $f\in \mathbb{Z}\langle x_1,\cdots,x_k\rangle$ i.e., $f(r_1,\cdots,r_k)=0$ for all $r_i\in R$.  The minimal degree of a PI ring $R$ is the least degree of all monic polynomial identities for $R$. PI rings cover a large class of rings including commutative rings. Commutative rings satisfy the polynomial identity $x_1x_2-x_2x_1$ and therefore have minimal degree $2$. Let us recall a result which provides a sufficient condition for a ring to be PI. 
\begin{prop}\emph{(\cite[Corollary 13.1.13]{mcr})}{\label{f}}
If $R$ is a ring that is a finitely generated module over a commutative subring, then $R$ is a PI ring.
\end{prop}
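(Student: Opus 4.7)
The plan is to reduce to the Amitsur--Levitzki theorem, which asserts that for any commutative ring $C$ and any $n\geq 1$, the matrix ring $M_n(C)$ satisfies the standard polynomial identity $s_{2n}$ of degree $2n$. Since polynomial identities are inherited by subrings and by quotient rings, it suffices to realize $R$ as a subring of a quotient of some $M_n(C)$.

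Let $C\subseteq R$ be a commutative subring with $R=Cr_1+\cdots+Cr_n$. First I would introduce the left regular representation $L:R\to\enmor_C(R)$ given by $L_r(x)=rx$; this is an injective ring homomorphism because $L_r(1_R)=r$, so the task reduces to showing that $\enmor_C(R)$ is PI. Next I would write $R\cong F/K$ with $F=C^n$ free of rank $n$ and $K=\kere\pi$ for the surjection $\pi:F\to R$, $\varepsilon_i\mapsto r_i$, and inside $M_n(C)=\enmor_C(F)$ form the two subsets
\[S=\{\phi\in M_n(C):\phi(K)\subseteq K\},\qquad I=\{\phi\in M_n(C):\phi(F)\subseteq K\}.\]
A direct check shows that $S$ is a subring of $M_n(C)$, that $I\subseteq S$ is a two-sided ideal of $S$, and that the assignment sending $\phi\in S$ to its induced endomorphism of $F/K$ defines a surjective ring homomorphism $S\to\enmor_C(R)$ with kernel exactly $I$; surjectivity uses the projectivity of the free module $F$ to lift each $C$-endomorphism of $R$ to an endomorphism of $F$, which then automatically preserves $K$. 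Thus $\enmor_C(R)\cong S/I$.

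Assembling the steps, $R\hookrightarrow\enmor_C(R)\cong S/I$, a quotient of a subring of $M_n(C)$. Since $M_n(C)$ satisfies $s_{2n}$ by Amitsur--Levitzki, so do $S$ and $S/I$, and consequently so does $R$. The main obstacle is the bookkeeping step that produces the ring isomorphism $\enmor_C(R)\cong S/I$: one has to verify carefully that $I$ is two-sided in $S$ (inside $M_n(C)$ it is merely a left ideal; the closure $\phi(K)\subseteq K$ defining $S$ is exactly what makes $I$ also a right $S$-module), and that the projectivity of $F$ supplies the necessary lifts. Once these checks are performed the remaining ingredients --- the Amitsur--Levitzki identity and the descent of polynomial identities under subrings and quotients --- are standard.
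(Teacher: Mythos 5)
Your argument is, in substance, the standard proof underlying the citation: the paper itself offers no proof of this proposition, quoting it from McConnell--Robson \cite{mcr}, and the route there is exactly the one you describe (regular representation into $\enmor_C(R)$, realization of $\enmor_C(R)$ as a quotient $S/I$ of a subring $S$ of $M_n(C)$ via a free presentation, and the Amitsur--Levitzki identity $s_{2n}$, which passes to subrings and to quotient rings). The bookkeeping you worry about is indeed fine: the lift of an endomorphism of $F/K$ through the free module automatically preserves $K$, and $I$ is a two-sided ideal of $S$ precisely because elements of $S$ preserve $K$.

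The one genuine wrinkle is a left/right mismatch, which matters because $C$ is only assumed commutative, not central in $R$. You generate $R$ as a \emph{left} $C$-module, $R=Cr_1+\cdots+Cr_n$, but then use \emph{left} multiplications $L_r(x)=rx$; these are right $C$-linear, not left $C$-linear, and for non-central $C$ (e.g.\ the diagonal matrices inside $M_2(\mathbb{K})$, over which $M_2(\mathbb{K})$ is a finitely generated left module) the map $L_r$ need not lie in $\enmor_C({}_CR)$ at all, so the embedding $R\hookrightarrow\enmor_C(R)$ is not defined as written. The repair is routine: either read the hypothesis as generation of $R$ as a right $C$-module, $R=r_1C+\cdots+r_nC$, and keep left multiplications (then $\pi:C^n\to R$, $\varepsilon_i\mapsto r_i$, is right $C$-linear and the rest of your argument goes through verbatim with $M_n(C)\cong\enmor_C(C^n)$); or keep left generation and use right multiplications $x\mapsto xr$, which embed the opposite ring $R^{\mathrm{op}}$ into $\enmor_C({}_CR)$, and then note that $s_{2n}$ evaluated in $R^{\mathrm{op}}$ equals $\pm s_{2n}$ evaluated in $R$, so $R$ still satisfies $s_{2n}$. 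In the paper's only application of this proposition the commutative subring is central, so there your argument is already complete; but since the statement assumes only a commutative subring, the sidedness point should be fixed.
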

Now in the root of unity setting, the above proposition yields the following result.
\begin{prop} \label{qwapia}
The quantum Weyl algebra ${A_n^{\underline{q},\Lambda}}$ is a PI algebra if and only if $q_i$'s and $\lambda_{ij}$'s are roots of unity.
\end{prop}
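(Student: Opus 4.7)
My plan is to prove the two directions separately. For the sufficient direction I would apply Proposition~\ref{f} after exhibiting a commutative subring over which $A_n^{\underline{q},\Lambda}$ is module-finite. For the necessary direction I would combine the PI-inheritance under subalgebras and Ore localizations with the standard characterization of quantum affine spaces (which are PI if and only if all defining scalars are roots of unity).

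Suppose first that every $q_i$ and $\lambda_{ij}$ is a root of unity, and let $N$ be a common multiple of the orders of all $q_i$, $\lambda_{ij}$, and $q_i\lambda_{ij}$. Raising each cross-index commutation scalar to the $N$th power shows that $x_i^N$ and $y_i^N$ commute with every $x_j, y_j$ for $j \neq i$. For the self-index case, Proposition~\ref{l1}(2)(a) yields
\[ x_i^N y_i = q_i^N y_i x_i^N + (1 + q_i + \cdots + q_i^{N-1}) z_{i-1} x_i^{N-1}, \]
where $q_i^N = 1$ and the geometric sum vanishes because $N$ is a multiple of $\ord(q_i) \geq 2$; the commutativity of $y_i^N$ with $x_i$ follows symmetrically from part~(2)(b). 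Thus $C := \mathbb{K}[x_1^N, y_1^N, \ldots, x_n^N, y_n^N]$ is a commutative subalgebra and, by the PBW basis~(\ref{kbasis}), $A_n^{\underline{q},\Lambda}$ is generated as a $C$-module by the finitely many monomials $y_1^{a_1} x_1^{b_1} \cdots y_n^{a_n} x_n^{b_n}$ with $0 \leq a_i, b_i < N$. Proposition~\ref{f} then concludes.

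Conversely, assume $A_n^{\underline{q},\Lambda}$ is PI. By the PBW basis, the subalgebra generated by $y_1, \ldots, y_n$ is the quantum affine space with commutation scalars $\lambda_{ij}$, so every $\lambda_{ij}$ is a root of unity. Applying the same argument to the quantum affine subalgebra generated by $x_1, \ldots, x_n$ (with scalars $q_i \lambda_{ij}$ for $i < j$) shows that $q_i$ is a root of unity for $1 \leq i \leq n-1$. The remaining parameter $q_n$ is the main obstacle, since it is not visible through subalgebras of the $x$'s and $y$'s. The plan for this final step is to invert the normal non-zero-divisor $z_{n-1}$ (normal by Proposition~\ref{l1}(1)(a), and a non-zero-divisor because $A_n^{\underline{q},\Lambda}$ is a domain), and to set $x_n' := z_{n-1}^{-1} x_n$ in the localization. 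Since $z_{n-1}$ commutes with both $x_n$ and $y_n$, one computes
\[ x_n' y_n - q_n y_n x_n' = z_{n-1}^{-1}(x_n y_n - q_n y_n x_n) = z_{n-1}^{-1} z_{n-1} = 1, \]
and the PBW basis ensures that $\mathbb{K}\langle x_n', y_n\rangle$ is an honest copy of $A_1^{q_n}$. As subalgebras and Ore localizations inherit the PI property, $A_1^{q_n}$ is PI, forcing $q_n$ to be a root of unity as well.
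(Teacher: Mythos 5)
Your argument is correct, and it splits naturally against the paper's proof. The sufficiency half is essentially the paper's: the paper takes $l=\lcmu(\ord(q_i),\ord(\lambda_{ij}))$, observes via Proposition~\ref{l1} that the subalgebra generated by the $x_i^{l},y_i^{l}$ is central, notes module-finiteness from the basis (\ref{kbasis}), and invokes Proposition~\ref{f}; your $N$ and your vanishing-geometric-sum computation are the same argument (and in fact show your $C$ is central, not merely commutative). The converse is where you genuinely diverge, to your advantage: the paper disposes of it in one line by pointing at the subalgebra $\mathbb{K}\langle x_i,x_j\rangle$ with parameter $q_i\lambda_{ij}$ and citing \cite[Proposition I.14.2]{brg}, which by itself neither separates $q_i$ from $\lambda_{ij}$ (one also needs the plane $\mathbb{K}\langle y_i,y_j\rangle$ with parameter $\lambda_{ij}$, exactly as you use) nor sees the last parameter $q_n$ (or the case $n=1$) at all. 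You identify this obstruction explicitly and recover $q_n$ by inverting the normal nonzerodivisor $z_{n-1}$ and passing to $x_n'=z_{n-1}^{-1}x_n$ (Jordan's localization trick, the same change of variables as in Theorem~\ref{loiso}); since the localization embeds in $\cf(A_n^{\underline{q},\Lambda})$, Corollary~\ref{pisame} keeps it PI, so your route is complete and arguably tighter than the paper's converse. Two points deserve one extra sentence each: (i) that $\mathbb{K}\langle x_n',y_n\rangle$ is a faithful copy of $A_1^{q_n}$ and not a proper quotient (proper quotients of $A_1^{q}$ can be PI even for generic $q$) does follow as you indicate, because $z_{n-1}$ commutes with $x_n$, so $y_n^a(x_n')^b=z_{n-1}^{-b}y_n^ax_n^b$, and (\ref{kbasis}) exhibits $A_n^{\underline{q},\Lambda}$ as a free left module over the subalgebra on the first $n-1$ pairs with basis $\{y_n^ax_n^b\}$, whence linear independence since the algebra is a domain; (ii) your last step uses the standard fact that $A_1^{q}$ is not PI when $q$ is not a root of unity, which the paper only states in the forward direction. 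If you want to avoid both, Proposition~\ref{l1}(1)(b) gives $z_ny_n=q_ny_nz_n$ inside $A_n^{\underline{q},\Lambda}$ itself, so $\mathbb{K}\langle z_n,y_n\rangle$ is a quantum plane with parameter $q_n$ and the same citation \cite[Proposition I.14.2]{brg} finishes the converse without localizing (at the cost of the same kind of faithfulness check).
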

\begin{proof}
Suppose $q_i$ and $\lambda_{ij}$ are roots of unity such that $l=\lcmu(\ord(q_i),\ord(\lambda_{ij}):1\leq i,j\leq n)$. Let $Z$ be the subalgebra of ${A_n^{\underline{q},\Lambda}}$ generated by $x_i^{l},y_i^{l},\ \ \forall\ \ 1\leq i\leq n$. Then from the defining relations of ${A_n^{\underline{q},\Lambda}}$ along with Proposition \ref{l1}, the subalgebra $Z$ is central. Now from the $\mathbb{K}$-basis of ${A_n^{\underline{q},\Lambda}}$, one can easily verify that ${A_n^{\underline{q},\Lambda}}$ is  a finitely generated module over a central subalgebra $Z$. Hence it follows from Proposition \ref{f} that ${A_n^{\underline{q},\Lambda}}$ is a PI algebra.
\par For the converse, just note that the $\mathbb{K}$-subalgebra of ${A_n^{\underline{q},\Lambda}}$ generated by $x_i$ and $x_j$ with relation $x_ix_j=q_i\lambda_{ij}x_jx_i$ is not PI if $q_i$ or $\lambda_{ij}$ is not a root of unity (cf. \cite[Proposition I.14.2.]{brg}).
\end{proof}
We now define the PI degree of prime PI-algebras. This definition will suffice because the algebras covered in this article are all prime. As a consequence of the Artin-Wedderburn Theorem, any central simple algebra $A$ is isomorphic to a matrix ring over a central simple division ring. Hence $\dime_{Z(A)}(A)=n^2$ for some natural number $n$. From this, we define the PI degree of $A$ to be $n$. We now recall one of the fundamental results from the Polynomial Identity theory.
\begin{theo}[Posner’s Theorem {\cite[Theorem 13.6.5]{mcr}}]\label{pos}
 Let $A$ be a prime PI ring with centre $Z(A)$ and minimal
degree $d$. Let $S=Z(A)\setminus\{0\}$, $Q=AS^{-1}$ and $F=Z(A)S^{-1}$. Then $Q$ is a central simple algebra with centre $F$ and $\dime_{F}(Q)=(\frac{d}{2})^2$.
\end{theo}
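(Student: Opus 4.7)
The plan is to follow the classical route through central polynomials. First, observe that since $A$ is prime, the center $Z(A)$ is a commutative integral domain: if $a,b\in Z(A)$ satisfy $ab=0$, then $aAb=(ab)A=0$, forcing $a=0$ or $b=0$. Hence $S=Z(A)\setminus\{0\}$ consists of regular elements of $A$ and $F=Z(A)S^{-1}$ is the fraction field of $Z(A)$.

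The first real task is to verify that $S$ is an Ore set in $A$, so that the classical quotient $Q=AS^{-1}$ is well defined. For this I would invoke the Formanek--Razmyslov central polynomial theorem: for every $m\geq 1$ there exists a multilinear polynomial $g_m\in\mathbb{Z}\langle X_1,\ldots,X_k\rangle$ which is not an identity of $M_m$ but takes central values on every ring satisfying the identities of $m\times m$ matrices. Evaluating such a $g_m$ (for $m$ matched to the PI degree of $A$) on elements of a nonzero one-sided ideal produces nonzero central elements, from which the left and right Ore conditions for $S$ follow by a standard manipulation. The same mechanism shows that $Q$ is simple: any nonzero two-sided ideal of $Q$ contracts to a nonzero ideal of $A$, which then contains a nonzero element of $Z(A)$, and that element is a unit of $Q$.

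Once $Q$ is built, $Z(Q)=F$ is routine (the inclusion $F\subseteq Z(Q)$ is clear, and any $as^{-1}\in Z(Q)$ forces $a\in Z(A)$). Since $Q$ is simple and still PI, Kaplansky's theorem on primitive PI rings yields that $Q$ is finite dimensional over $F$, so $Q$ is a central simple $F$-algebra. Wedderburn--Artin then produces $Q\cong M_r(D)$ for a division $F$-algebra $D$ with $Z(D)=F$ and $[D:F]=k^2$, giving $\dime_F Q=(rk)^2$.

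To match this number with $d$, I would pass to the algebraic closure: $Q\otimes_F\bar F\cong M_{rk}(\bar F)$, whose minimal degree equals $2rk$ by the sharp form of the Amitsur--Levitzki theorem. Minimal degrees agree along the chain $A\hookrightarrow Q\hookrightarrow Q\otimes_F\bar F$, since identities of $A$ extend to $Q$ after clearing central denominators and multilinear identities ascend through faithfully flat extensions. Therefore $d=2rk$ and $\dime_F Q=(d/2)^2$. The main obstacle is the first genuine step --- extracting the Ore condition (and the central units that force simplicity) from the Formanek--Razmyslov polynomials --- while the remaining steps are essentially formal applications of Wedderburn--Artin, Kaplansky's theorem and Amitsur--Levitzki.
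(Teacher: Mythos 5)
The paper does not prove this statement at all: Posner's Theorem is imported verbatim from McConnell--Robson \cite[Theorem 13.6.5]{mcr} and used as a black box, so there is no internal proof to compare yours against. Judged on its own, your outline is the standard textbook route (central polynomials, simplicity of $Q$, Kaplansky, Wedderburn--Artin, Amitsur--Levitzki) and is essentially sound, but you have misplaced the difficulty. Since $S=Z(A)\setminus\{0\}$ is a \emph{central} multiplicative set of regular elements (regularity following from primeness exactly as you argue), the left and right Ore conditions are automatic --- $as=sa$ for all $a\in A$, $s\in S$ --- so the localization $AS^{-1}$ exists with no appeal to Formanek--Razmyslov polynomials; what you call ``the first genuine step'' and ``the main obstacle'' is a non-issue. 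The place where central polynomials genuinely carry the weight is the step you treat more casually: showing that every nonzero (two-sided) ideal of the prime PI ring $A$ meets $Z(A)$ nontrivially, which is what makes $Q$ simple, and which requires an actual argument that the relevant central polynomial has a nonzero evaluation on the given ideal (Rowen's theorem), not just its existence. The remaining steps --- identities of $A$ passing to $Q$ by multilinearization and clearing central denominators, Kaplansky giving finite-dimensionality of the simple PI ring $Q$ over $F=Z(Q)$, and the exact count $d=2rk$ via extension of scalars and Amitsur--Levitzki --- are correctly identified and are indeed routine once the ideal-meets-center lemma is in place.
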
 
Note that the $Q$ in Posner’s theorem is PI and, since $Q$ is a central simple algebra, we can state its PI degree to be $\frac{d}{2}$ by the discussion above. Furthermore, as a result, \cite[I.13.2(6)]{brg}, $Q$ has the same minimal degree as $A$, namely $d$. Recognizing that the PI degree can be interpreted as some measure of how close to being commutative a PI-algebra is and that this, in turn, is related to its minimal degree, the definition of PI degree given above can be extended to all prime PI rings in the following way. 
\begin{defi}
The PI degree of a prime PI ring $A$ with minimal degree $d$ is $\pideg(A)=\frac{d}{2}$.
\end{defi}
\begin{rema}\label{piquotient}\normalfont
Based on this definition, for a prime PI ring $R$:
\begin{itemize}
    \item [(1)] $\pideg(S)\leq \pideg(R)$ for all subalgebra $S$ of $R$.
    \item [(2)] $\pideg(R/P)\leq \pideg(R)$ for all prime ideals $P\in \spect(R)$.     
\end{itemize}
\end{rema}
As a consequence of Posner's Theorem, every prime PI ring $R$ has a total ring of fractions $\cf(R)$ obtained by inverting all nonzero central elements of $R$. Now we obtain the following result \cite[Corollary I.13.3]{brg}: 
\begin{coro}\label{pisame}
Let $R$ be a prime PI ring. If $S$ is a subring of $\cf(R)$ with $R\subseteq S$, then $S$ is also a prime PI ring and $\pideg(S)=\pideg(R)$.
\end{coro}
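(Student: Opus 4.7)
The plan is to establish three things in turn: (i) $S$ is PI, (ii) $S$ is prime, and (iii) $\pideg(S)=\pideg(R)$. The primeness step is the only nontrivial one, since once $S$ is prime PI the Remark immediately above the corollary delivers both needed PI-degree inequalities essentially for free.

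For (i), I would note that by Posner's theorem $\cf(R)$ is a central simple algebra of finite dimension $\pideg(R)^{2}$ over its centre $F$, and hence is a finitely generated module over the commutative subring $F$. Proposition \ref{f} then guarantees $\cf(R)$ is PI, and any polynomial identity satisfied by $\cf(R)$ restricts to its subring $S$, so $S$ is PI as well.

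For (ii), I would contract every nonzero ideal of $S$ to a nonzero ideal of $R$ and then transfer primeness from $R$. Given a nonzero two-sided ideal $I$ of $S$, pick $0\neq s\in I$ and use $\cf(R)=R[Z(R)\setminus\{0\}]^{-1}$ to write $s=rc^{-1}$ with $r\in R$ and $c\in Z(R)\setminus\{0\}$. Since $c\in R\subseteq S$, the element $sc$ lies in $I$; but $sc=r\in R$, and $r\neq 0$ because $c$ is a unit in the simple Artinian ring $\cf(R)$. Hence $0\neq r\in I\cap R$. A direct check shows $I\cap R$ is a two-sided ideal of $R$ (using $R\subseteq S$ for the absorption property). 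Now if $I,J$ are nonzero ideals of $S$ with $IJ=0$, the chain $(I\cap R)(J\cap R)\subseteq IJ=0$ contradicts the primeness of $R$, so $S$ is prime.

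Once $S$ is known to be a prime PI ring, step (iii) is immediate: Remark \ref{piquotient}(1) applied to $R\subseteq S$ gives $\pideg(R)\leq \pideg(S)$, and applied to $S\subseteq \cf(R)$ gives $\pideg(S)\leq \pideg(\cf(R))$. Since $R$ and $\cf(R)$ share the same minimal degree (as noted in the discussion following Posner's theorem), $\pideg(\cf(R))=\pideg(R)$, and the two inequalities pinch to equality. The main obstacle throughout is the primeness argument in (ii); it works here because the denominator $c$ needed to express an element of $S\subseteq \cf(R)$ already lies in $R$ and is therefore available inside $S$ to multiply back into $R$, a feature that would fail for an arbitrary subring of a prime ring.
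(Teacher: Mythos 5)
Your proposal is correct. Note, though, that the paper does not prove this statement at all: it is quoted directly from \cite[Corollary I.13.3]{brg}, so there is no internal proof to compare against, and what you have written is a legitimate self-contained derivation. Your three steps are sound: (i) Posner's Theorem \ref{pos} makes $\cf(R)$ a finite-dimensional module over the field $F$, so Proposition \ref{f} gives that $\cf(R)$ is PI, and identities pass to the subring $S$; (ii) the contraction argument is exactly the right mechanism, and the key observation you isolate --- that the denominator $c\in Z(R)\setminus\{0\}$ already lies in $R\subseteq S$, so $sc=r$ pulls a nonzero element of a nonzero ideal of $S$ back into $R$ --- is what makes $I\cap R$ and $J\cap R$ nonzero ideals of $R$ and transfers primeness; (iii) the pinching $\pideg(R)\leq\pideg(S)\leq\pideg(\cf(R))=\pideg(R)$ is valid, using the equality of minimal degrees of $R$ and $\cf(R)$ noted after Posner's Theorem. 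Two cosmetic remarks: in (iii) you could bypass Remark \ref{piquotient} entirely and argue directly with minimal degrees ($d(R)\leq d(S)\leq d(\cf(R))=d(R)$, since any identity of a ring holds in its subrings), which avoids the slight mismatch that the Remark is phrased for subalgebras and tacitly assumes the subobject is prime; and in (i) one can equally invoke the standard identity of degree $2\pideg(R)$ on the central simple algebra $\cf(R)$. Neither affects correctness.
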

Primitive PI ring exhibits a particularly nice structure, established in Kaplansky's Theorem (cf. \cite[ Theorem 13.3.8]{mcr}).  Now Kaplansky's Theorem has a striking consequence in the case of a prime affine PI algebra over an algebraically closed field. The following result provides an important link between the PI degree of a prime affine PI algebra over an algebraically closed field and the $\mathbb{K}$-dimension of its irreducible representations (cf. \cite[Theorem I.13.5, Lemma III.1.2]{brg}): 
\begin{prop}\label{sim}
Let $A$ be a prime affine PI algebra over an algebraically closed field $\mathbb{K}$, with PI-deg($A$) = $n$ and $V$ be a simple $A$-module. Then $V$ is a vector space over $\mathbb{K}$ of dimension $t$, where $t \leq n$, and $A/ann_A(V) \cong M_t(\mathbb{K})$. Moreover, the upper bound PI-deg($A$) is attained by some simple $A$-modules.
\end{prop}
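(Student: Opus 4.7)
The plan is to combine Kaplansky's theorem on primitive PI rings with an affine Nullstellensatz to pin down the structure of $A/\text{ann}_A(V)$, and then invoke Posner's theorem (Theorem \ref{pos}) together with the existence of central polynomials to produce a simple module attaining the maximal dimension.

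First I would handle the structural part. Since $V$ is simple, $R := A/\text{ann}_A(V)$ is a primitive ring; it remains PI because polynomial identities pass to quotients. Kaplansky's theorem then forces $R$ to be simple Artinian and finite-dimensional over its center, so Wedderburn gives $R \cong M_t(D)$ for a division ring $D$ with $Z(D) = Z(R)$ and $[D : Z(D)] < \infty$. The affineness of $A$ over $\mathbb{K}$, combined with the PI hypothesis, now permits applying the Amitsur--Procesi Nullstellensatz for affine PI algebras: every primitive quotient of $A$ is finite-dimensional over $\mathbb{K}$, and in particular $Z(R)$ is algebraic over $\mathbb{K}$. Algebraic closedness forces $Z(R) = \mathbb{K}$, and since a finite-dimensional division algebra over an algebraically closed field is trivial, $D = \mathbb{K}$ and $R \cong M_t(\mathbb{K})$. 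This identifies $V$ with $\mathbb{K}^t$ as a $\mathbb{K}$-vector space, so $\dime_{\mathbb{K}} V = t$. The bound $t \leq n$ is then immediate from Remark \ref{piquotient}(2), since $t = \pideg(R) \leq \pideg(A) = n$.

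The second assertion, that the upper bound $n$ is attained by some simple module, is the more delicate part and where the main obstacle lies. The strategy is to exhibit a maximal ideal $\mathfrak{m}$ of the center $Z(A)$ at which the PI degree does not drop; then $A/\mathfrak{m}A$ is a finite-dimensional $\mathbb{K}$-algebra of PI degree exactly $n$, and the structural argument above supplies an irreducible representation of $\mathbb{K}$-dimension $n$. To produce such an $\mathfrak{m}$, I would use a Formanek central polynomial for $M_n(\mathbb{K})$: evaluating it on $A$ yields a central element $c \in Z(A)$ whose image in the central localization $Q = A S^{-1}$ of Theorem \ref{pos} is nonzero (since $Q$ is central simple of PI degree $n$), and hence $c \neq 0$ in $Z(A)$. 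Since $Z(A)$ is an affine commutative $\mathbb{K}$-algebra, Hilbert's Nullstellensatz produces a maximal ideal $\mathfrak{m}$ of $Z(A)$ with $c \notin \mathfrak{m}$; the nonvanishing of Formanek's polynomial at $\mathfrak{m}$ is precisely the condition that $A/\mathfrak{m}A$ retains PI degree $n$, completing the argument. The delicate step is the nonvanishing claim: it requires that Formanek's polynomial, viewed as a trace-like function, not be annihilated by passage to the total ring of fractions, and this is where standard machinery from PI theory (as encapsulated in \cite[Theorem I.13.5]{brg}) must be invoked.
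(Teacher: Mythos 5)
The paper does not actually prove this proposition; it is quoted from Brown--Goodearl \cite[Theorem I.13.5, Lemma III.1.2]{brg}, so there is no in-paper argument to compare against. Your first part is correct and is the standard argument: $A/\ann_A(V)$ is a primitive PI ring, Kaplansky gives $M_t(D)$ with $D$ finite-dimensional over the center, the Amitsur--Procesi Nullstellensatz for affine PI algebras makes that center algebraic over $\mathbb{K}$, and algebraic closedness collapses everything to $M_t(\mathbb{K})$ with $t=\pideg\bigl(A/\ann_A(V)\bigr)\le n$.

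The second part has the right strategy (a Formanek--Razmyslov central polynomial $g$ for $n\times n$ matrices which is an identity for $(n-1)\times(n-1)$ matrices, plus a Nullstellensatz), but two steps are unjustified as written. First, you assert that $Z(A)$ is an affine commutative $\mathbb{K}$-algebra in order to invoke Hilbert's Nullstellensatz; the center of an affine PI algebra need not be affine, and nothing in the hypotheses (prime, affine, PI) supplies this. Second, you assert that $A/\mathfrak{m}A$ is finite-dimensional over $\mathbb{K}$ of PI degree $n$; this would follow if $A$ were module-finite over $Z(A)$, which is not assumed, and one must even check $\mathfrak{m}A\neq A$. Both gaps disappear if you work with ideals of $A$ instead of ideals of $Z(A)$: your localization argument does produce a nonzero central evaluation $c=g(a_1,\dots,a_m)\in Z(A)\setminus\{0\}$, and since an affine PI algebra over a field is a Jacobson ring whose primitive quotients are finite-dimensional (the same Nullstellensatz used in part one), there is a primitive ideal $P$ of $A$ with $c\notin P$. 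Then $A/P\cong M_t(\mathbb{K})$ with $t\le n$ by the first part, and $t<n$ is impossible because $g$ is an identity of $M_t(\mathbb{K})$ for $t<n$ yet has the nonzero evaluation $c+P$ there; hence $t=n$ and the simple module of $A/P\cong M_n(\mathbb{K})$ attains the bound.
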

\par  
\begin{rema}
In the roots of unity context, the quantum Weyl algebra is classified as a prime affine PI algebra. Consequently, according to Proposition \ref{sim}, it is quite clear that each simple ${A_n^{\underline{q},\Lambda}}$-module is finite dimensional and can have dimension at most $\pideg( {A_n^{\underline{q},\Lambda}}$). In Section \ref{lpi}, our focus will be on computing the PI degree of ${A_n^{\underline{q},\Lambda}}$. 
\end{rema} 
In the following two subsections, we will focus on deleting derivation process and PI degree of quantum affine spaces to compute the PI degree of the concerned algebras.
\subsection{Derivation Erasing Process} In this discussion we shall recall some results of PI degree parity between iterated Ore extension and associated quantum polynomial algebra depending on various assumptions on base ring, existing derivations, and base field. 
\par The main result of \cite{hh} states that if $R$ is a noetherian domain which is also an algebra over a field $\mathbb{K}$ then, under some quantum-like hypothesis, the iterated Ore extension $R_n=R[x_1,\sigma_1,\delta_1]\cdots[x_n,\sigma_n,\delta_n]$ and $T_n=R[x_1,\sigma_1]\cdots[x_n,\sigma_n]$ have the same PI degree.  This result was known earlier only in the case where $R$ is a prime algebra over a field of $\mathbb{K}$ of characteristic $0$ (cf. \cite{sj1}). In \cite{lm2}, A. Leroy and J. Matczuk presented a short and elementary proof of the derivation erasing process under the hypothesis that the iterated Ore extension is a prime PI ring. In particular, the following result was shown:

\begin{prop}\emph{(\cite[Theorem 7]{lm2})}\label{dr}
Suppose that $R=R_0$ is a prime PI algebra over a field $\mathbb{K}$ and $n\geq 1$. Let $R_i:=R_{i-1}[x_i,\sigma_i,\delta_i],~1\leq i\leq n$, be a sequence of Ore extensions such that each $\sigma_i$ is a $\mathbb{K}$-linear automorphism of $R_{i-1}$ and each $\delta_i$ is a $\mathbb{K}$-linear $\sigma_i$-derivation of $R_{i-1}$ such that:

\begin{enumerate}
    \item[(i)] $\sigma_i|_{R_0}$ is an automorphism of $R_0$ of finite order, for any $1\leq i \leq n$.
    \item[(ii)] $\sigma_i(x_j)=q_{ij}x_j$, where $q_{ij}\in \mathbb{K}^*$, for any $1\leq j<i\leq n$.
    \item[(iii)] $\delta_i$ is a $q_i$-skew $\sigma_i$-derivation of $R_{i-1}$, where $1\neq q_i\in \mathbb{K}$, for any $1\leq i \leq n$.
\end{enumerate}
Then the following conditions are equivalent:
\begin{enumerate}
    \item $R_n$ is a PI algebra.
    \item $T_n=R_0[y_1,\sigma'_1]\cdots [y_n,\sigma'_n]$ is a PI algebra, where $\sigma'_i|_{R_0}=\sigma_i|_{R_0}$ and $\sigma'_i(y_j)=q_{ij}y_j$, for $1\leq i \leq n$ and $1\leq j <i \leq n$;
    \item $q_{ij}$ is a root of unity, for any $1\leq j <i \leq n$;
    \item $\sigma_i$ is an automorphism of finite order of $R_{i-1}$, for any $1\leq i \leq n$.
\end{enumerate}
Moreover, if one of the above equivalent conditions holds, then the algebras $R_n$ and $T_n$ have isomorphic classical rings of quotients and equal PI degree.
\end{prop}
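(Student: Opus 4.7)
My plan is to prove the four-way equivalence via a short cycle and then derive the quotient-ring identification and PI-degree equality as a consequence of the $q$-skew structure.

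First I would observe that (3)$\Leftrightarrow$(4) is essentially formal: since $R_{i-1}$ is generated as an algebra by $R_0$ together with $x_1,\ldots,x_{i-1}$, and $\sigma_i$ acts on $R_0$ with finite order by hypothesis (i) and on each $x_j$ by scaling by $q_{ij}$ by hypothesis (ii), the finite order of $\sigma_i$ is equivalent to all $q_{ij}$ being roots of unity. For (4)$\Rightarrow$(2) I would pick, for each $i$, an exponent $m_i$ large enough that $y_i^{m_i}$ commutes with every lower generator up to the trivial automorphism; combined with a central polynomial subalgebra of $R_0$ furnished by Posner's theorem (Theorem \ref{pos}) applied to the prime PI ring $R_0$, this produces a central subalgebra of $T_n$ over which $T_n$ is finitely generated as a module, whence $T_n$ is PI by Proposition \ref{f}. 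Conversely, for (2)$\Rightarrow$(3), the quantum plane $\mathbb{K}\langle y_i, y_j\rangle/(y_iy_j-q_{ij}y_jy_i)$ embeds as a subalgebra of $T_n$ and is PI iff $q_{ij}$ is a root of unity, so Remark \ref{piquotient}(1) forces (3).

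The real content of the proposition is the equivalence with (1) together with the claim that $R_n$ and $T_n$ share the same classical ring of quotients and the same PI degree. Here the $q$-skew hypothesis is essential. I would argue by induction on $n$, which reduces matters to the one-step case: given a prime PI ring $R$ and an Ore extension $S = R[x;\sigma,\delta]$ with $\delta$ a $q$-skew $\sigma$-derivation and $q \neq 1$, exhibit an element $y \in \cf(S)$ such that $yr = \sigma(r)y$ for all $r \in R$ and such that $S$ and $R[y;\sigma]$ have the same total ring of fractions. A natural candidate is $y := x - a$ for a suitable $a$ in some Ore localization of $R$, chosen so that the inner twist by $y$ annihilates $\delta$ on $R$; the identity $\delta\sigma = q\sigma\delta$ together with the invertibility of $1 - q, 1 - q^2, \ldots$ (guaranteed by $q \neq 1$ after localizing) permits such an $a$ to be constructed as a formal expression involving iterated $\delta$-images of a generating set, which terminates in the Goldie quotient because of the PI (and hence Ore) structure of $S$. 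Once $y$ is produced, the inclusion $R[y;\sigma] \subseteq \cf(S) \supseteq S$ is inside a common total quotient, and Corollary \ref{pisame} immediately yields $\pideg(S) = \pideg(R[y;\sigma])$; iterating across $i = 1, \ldots, n$ matches $R_n$ with $T_n$ and closes the loop (1)$\Leftrightarrow$(2).

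The principal obstacle is the one-step construction and justification of this $y$ in the $q$-skew setting: one must verify that the twist $a$ actually lies in an Ore localization of $R$ rather than in a merely formal completion, and that the resulting inclusion in $\cf(S)$ preserves the total ring of fractions so that Corollary \ref{pisame} may legitimately be invoked. Verifying that the resulting inner derivation exactly cancels $\delta$ on every element of $R$ is also where the $q$-skew commutation is used most sharply. Once this step is in place, the equality of PI degrees is formal, and the transfer of the PI property between $R_n$ and $T_n$ in both directions follows, completing the proof.
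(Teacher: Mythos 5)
This proposition is not proved in the paper at all: it is quoted verbatim from Leroy--Matczuk \cite[Theorem 7]{lm2}, so there is no internal argument to compare yours against; your proposal has to stand on its own, and as it stands it has two genuine gaps. The decisive one is the single step you yourself flag as ``the principal obstacle'': producing $a$ in an Ore localization of $R$ (equivalently, showing the $q$-skew $\sigma$-derivation $\delta$ becomes inner in the Goldie quotient ring) so that $y=x-a$ satisfies $yr=\sigma(r)y$ and $\cf(R[x;\sigma,\delta])=\cf(R[y;\sigma])$. This is precisely the mathematical content of Leroy--Matczuk's theorem, not a routine verification: the natural candidate built from iterated $\delta$-images is an infinite formal series (as in Cauchon's deleting-derivations map, which moreover requires inverting $x$, not just central elements of $R$), and there is no reason it ``terminates in the Goldie quotient because of the PI structure.'' Establishing innerness of a $q$-skew derivation ($q\neq 1$) on the quotient ring of a prime PI Goldie ring requires a genuine argument using the relation $\delta\sigma=q\sigma\delta$, and without it neither the equivalence of (1) with the other conditions nor the ``moreover'' clause (isomorphic quotient rings, equal PI degree) is proved. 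One should also check primeness of each intermediate $R_{i-1}$ and that the deleted-derivation variables reproduce exactly the commutation data of $T_n$ across all $n$ steps, which your induction sketch glosses over.

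The second gap is in your (4)$\Rightarrow$(2): Posner's Theorem does not furnish a central subalgebra of $R_0$ over which $R_0$ (let alone $T_n$) is a finitely generated module --- it only describes the central localization $Q=RS^{-1}$ as a central simple algebra --- and a prime PI algebra need not be module-finite over any central subring. So Proposition \ref{f} cannot be invoked the way you propose. The implication is still true, but it needs a different argument (for instance, that a skew polynomial extension $R[y;\sigma]$ with $\sigma$ of finite order $m$ is a free module of rank $m$ over the subring $R[y^m]$ with $y^m$ central, together with the fact that polynomial extensions of PI rings are PI, iterated over the tower), or simply a citation. Your (3)$\Leftrightarrow$(4) and (2)$\Rightarrow$(3) steps are fine.
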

This erasing derivation process does not assume that the derivations are locally nilpotent or that the prime base ring is of zero characteristic. In this sense, it gives a generalization of the well-known Cauchon process of erasing derivations (cf. \cite{cau}). Both Theorem $1.2(1)$ and Corollary $(4.7)$ of \cite{hh} are direct consequences of the above proposition. 
\par Most of the quantum algebras can be presented as iterated Ore extension of the form $\mathbb{K}[x_1][x_2,\sigma_2,\delta_2]\cdots[x_n,\sigma_n,\delta_n]$, where the appearing automorphisms and skew derivations are as in the proposition (\ref{dr}) by taking $R_0=\mathbb{K},\sigma_1=id_{\mathbb{K}},\delta_1=0$. Thus such algebras have the same PI degree with quantum affine space $\mathbb{K}[x_1][x_2,\sigma_2]\cdots[x_n,\sigma_n]$. 
\subsection{PI Degree of Quantum Affine Spaces} 
Let $\mathbf{q}=(q_{ij})$ be a multiplicatively antisymmetric $(n\times n)$-matrix over $\mathbb{K}$, that is, $q_{ii}=1$ for all $i$ and $q_{ji}={q_{ij}}^{-1}$ for all $i\neq j$. Given such a matrix, the multiparameter quantum affine space of degree $n$ is the $\mathbb{K}$-algebra $\mathcal{O}_{\mathbf{q}}(\mathbb{K}^n)$ generated by the variables $x_1,\cdots ,x_n$ subject only to the relations
\begin{equation} \label{relation}
x_ix_j=q_{ij}x_jx_i, \ \ \ \forall\ \ \ 1 \leq i,j\leq n.
\end{equation}
It is of interest to know when a quantum affine space $\mathcal{O}_{\mathbf{q}}(\mathbb{K}^n)$ is a PI ring and what its PI degree is. Using Kaplansky's Theorem and Proposition \ref{f}, one can prove that $\mathcal{O}_{\mathbf{q}}(\mathbb{K}^n)$ is a PI ring if and only if all the $q_{ij}$ are roots of unity. The following result of De Concini and Procesi provides one of the key techniques for calculating the PI degree of a quantum affine space independent of characteristic.
\begin{prop}\emph{(\cite[Proposition 7.1]{di})}\label{quan}
Let $\mathbf{q}=\left(q_{ij}\right)$ be an  $n \times n$ multiplicatively antisymmetric matrix over $\mathbb{K}$.
 Suppose that $q_{ij}=q^{h_{ij}}$ for all $i,j$, where $q \in \mathbb{K}^*$ is a primitive $m$-th root of unity and the $h_{ij} \in \mathbb{Z}$. Let $h$ be the cardinality of the image of the homomorphism 
\[
    \mathbb{Z}^n \xrightarrow{(h_{ij})} \mathbb{Z}^n \xrightarrow{\pi} \left(\mathbb{Z}/m\mathbb{Z}\right)^n,
\]
where $\pi$ denotes the canonical epimorphism. Then \[\pideg(\mathcal{O}_{\mathbf{q}}(\mathbb{K}^n))=\pideg(\mathcal{O}_{\mathbf{q}}\left(\mathbb{(K^*)}^n\right)=\sqrt{h}.\]
\end{prop}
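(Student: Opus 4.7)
The plan is to reduce the problem to the quantum torus, identify its centre explicitly in terms of the lattice $L$ appearing in the statement, and then read off the PI degree from Posner's theorem. Since $\mathcal{O}_{\mathbf{q}}((\mathbb{K}^*)^n)$ is obtained from $\mathcal{O}_{\mathbf{q}}(\mathbb{K}^n)$ by inverting the Ore set generated by the normal elements $x_1, \ldots, x_n$, both algebras share a common ring of fractions, so Corollary~\ref{pisame} gives $\pideg(\mathcal{O}_{\mathbf{q}}(\mathbb{K}^n)) = \pideg(\mathcal{O}_{\mathbf{q}}((\mathbb{K}^*)^n))$. Writing $T$ for the torus, it therefore suffices to prove $\pideg(T) = \sqrt{h}$.

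Setting $x^a := x_1^{a_1} \cdots x_n^{a_n}$ for $a \in \mathbb{Z}^n$, the family $\{x^a\}_{a \in \mathbb{Z}^n}$ is a $\mathbb{K}$-basis of $T$, and a direct computation from relations~(\ref{relation}) yields
\[
x_j \cdot x^a \;=\; q^{\sum_i h_{ji} a_i}\, x^a \cdot x_j, \qquad 1 \le j \le n.
\]
Thus each inner automorphism $\mathrm{ad}(x_j)$ is diagonal on the monomial basis, and by linear independence of the monomials the centre must be the $\mathbb{K}$-span of the central ones. A monomial $x^a$ is central precisely when $\sum_i h_{ji} a_i \equiv 0 \pmod{m}$ for all $j$; by antisymmetry of $(h_{ij})$ this is equivalent to $a$ lying in the kernel $L := \kere(\pi \circ (h_{ij}))$. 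Consequently $Z(T) = \bigoplus_{a \in L} \mathbb{K}\, x^a$, and by the very definition of $h$ one has $[\mathbb{Z}^n : L] = h$.

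Choosing coset representatives $a^{(1)}, \dots, a^{(h)}$ of $L$ inside $\mathbb{Z}^n$, the relation $x^b \cdot x^{a^{(k)}} \in \mathbb{K}^* \cdot x^{b + a^{(k)}}$ for $b \in L$ gives the $Z(T)$-module decomposition
\[
T \;=\; \bigoplus_{k=1}^{h} Z(T) \cdot x^{a^{(k)}},
\]
exhibiting $T$ as a free module of rank $h$ over its centre. Localising at $Z(T) \setminus \{0\}$ produces $Q := T \otimes_{Z(T)} \cf(Z(T))$, a $\cf(Z(T))$-vector space of dimension $h$. Posner's theorem (Theorem~\ref{pos}) identifies $Q$ as a central simple $\cf(Z(T))$-algebra with $\dime_{\cf(Z(T))}(Q) = \pideg(T)^2$, and comparing dimensions yields $\pideg(T) = \sqrt{h}$, as required.

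The only genuinely delicate point is the second step, the identification of $Z(T)$: it is not enough to produce a list of central monomials, one must rule out hidden central combinations. This is handled by the simultaneous diagonalisation of $\mathrm{ad}(x_1), \ldots, \mathrm{ad}(x_n)$ on a linearly independent monomial basis, which forces every central element to be supported on $L$. Once $Z(T)$ is pinned down, the free-module decomposition and the appeal to Posner's theorem are essentially formal; the equality $\pideg(\mathcal{O}_{\mathbf{q}}(\mathbb{K}^n)) = \pideg(\mathcal{O}_{\mathbf{q}}((\mathbb{K}^*)^n))$ is likewise immediate from Corollary~\ref{pisame}.
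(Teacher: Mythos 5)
Your proof is correct. The paper itself does not prove this proposition --- it is quoted from De Concini--Procesi \cite{di} --- and your argument is essentially the standard one behind that citation: reduce to the quantum torus via Corollary \ref{pisame}, diagonalise conjugation by the $x_j$ on the monomial basis to show $Z(T)=\bigoplus_{a\in L}\mathbb{K}x^a$ with $L=\kere(\pi\circ(h_{ij}))$, note $T$ is free of rank $h=[\mathbb{Z}^n:L]$ over $Z(T)$, and compare dimensions with Posner's theorem. The only point you might state explicitly is that Posner's theorem needs $T$ to be prime PI, which holds here since $q$ is a root of unity, so that $x_1^{\pm m},\dots,x_n^{\pm m}$ generate a central subalgebra over which $T$ is a finitely generated module.
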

It is well known that a skew-symmetric matrix over $\mathbb{Z}$ such as our matrix $H:=(h_{ij})$ can be brought into a $2\times 2$ block diagonal form (commonly known as skew normal form) by a unimodular matrix $W\in GL_n(\mathbb{Z})$. Then $H$ is congruent to a block diagonal matrix of the form: 
\[WHW^{t}=\diagonal\left(\begin{pmatrix}
0&h_1\\
-h_1&0
\end{pmatrix},\cdots,\begin{pmatrix}
0&h_s\\
-h_s&0
\end{pmatrix},\bf{0}_{n-2s}\right),\]
where $\bf{0}_{n-2s}$ is the square matrix of zeros of dimension equals $\dime(\kere H)$, so that $2s=\ran(H)=n-\dime(\kere H)$ and $h_i\mid h_{i+1}\in \mathbb{Z}\setminus\{0\},~~\forall \ \ 1\leq i \leq s-1$. This nonzero $h_1,h_1,\cdots,h_s,h_s$ (each occurs twice) are called the invariant factors of $H$. The following result simplifies the calculation of $h$ in the statement of proposition (\ref{quan}) by the properties of the integral matrix $H$, namely the dimension of its kernel, along with its invariant factors and the value of $m$.
\begin{lemm}\emph{(\cite[Lemma 5.7]{ar})}\label{mainpi}
Take $1\neq q\in \mathbb{K}^*$, a primitive $m$-th root of unity. Let $H$ be a skew symmetric integral matrix associated to $\mathbf{q}$ with invariant factors $h_1,h_1,\cdots,h_s,h_s$. Then PI degree of $\mathcal{O}_{\mathbf{q}}(\mathbb{K}^n)$ is given as \[\pideg(\mathcal{O}_{\mathbf{q}}(\mathbb{K}^n))=\pideg(\mathcal{O}_{\mathbf{q}}\left(\mathbb{(K^*)}^n\right)=\prod_{i=1}^{\frac{n-\dime(\kere H)}{2}}\frac{m}{\gcdi(h_i,m)}.\]
\end{lemm}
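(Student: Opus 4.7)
The plan is to invoke Proposition \ref{quan}, which reduces matters to computing the cardinality $h$ of the image of the composite map $\mathbb{Z}^n \xrightarrow{H} \mathbb{Z}^n \xrightarrow{\pi} (\mathbb{Z}/m\mathbb{Z})^n$. Once $h$ is computed, the PI degree is $\sqrt{h}$, and it remains only to match this value to the advertised product. The proof proceeds by first reducing $H$ to its skew normal form and then reading $h$ off block by block.

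The first step is to show that $h$ is unchanged when $H$ is replaced by $WHW^t$ for any $W \in GL_n(\mathbb{Z})$. Writing the image as $\{WHW^t v \bmod m : v \in \mathbb{Z}^n\}$ and noting that $W^t$ is a bijection on $\mathbb{Z}^n$, this set equals $\{WHu \bmod m : u \in \mathbb{Z}^n\}$. Moreover, $\det W = \pm 1$ forces the reduction of $W$ modulo $m$ to lie in $GL_n(\mathbb{Z}/m\mathbb{Z})$, so left multiplication by $W$ induces an automorphism of $(\mathbb{Z}/m\mathbb{Z})^n$ and preserves cardinalities. Therefore I may assume $H$ has been put into its skew normal form, and $\dime(\kere H) = n - 2s$ is visibly preserved.

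For such a block-diagonal $H$, the map $\pi \circ H$ decomposes as a direct product over the $2 \times 2$ blocks, with the zero block $\mathbf{0}_{n-2s}$ contributing a trivial factor. A single block $\begin{pmatrix} 0 & h_i \\ -h_i & 0 \end{pmatrix}$ sends $(a,b) \in \mathbb{Z}^2$ to $(h_i b, -h_i a)$, whose image in $(\mathbb{Z}/m\mathbb{Z})^2$ is the subgroup generated by $h_i$ in each coordinate, of cardinality $\bigl(m/\gcdi(h_i,m)\bigr)^2$. Taking the product over $i = 1, \ldots, s$ yields $h = \prod_{i=1}^{s}\bigl(m/\gcdi(h_i,m)\bigr)^2$, so Proposition \ref{quan} gives $\pideg(\mathcal{O}_{\mathbf{q}}(\mathbb{K}^n)) = \sqrt{h} = \prod_{i=1}^{s} m/\gcdi(h_i,m)$, which matches the claim since $s = (n - \dime(\kere H))/2$. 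The only mildly subtle point in the whole argument is the invariance of $h$ under conjugation by unimodular $W$, which is ultimately a consequence of the fact that reduction mod $m$ commutes with the action of invertible integer matrices; everything else is straightforward bookkeeping with cyclic subgroups of $\mathbb{Z}/m\mathbb{Z}$.
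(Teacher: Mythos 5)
Your argument is correct: the unimodular-invariance step is justified exactly as you say (reduction mod $m$ sends $W$ into $GL_n(\mathbb{Z}/m\mathbb{Z})$, and $W^t$ permutes $\mathbb{Z}^n$), and the block-by-block count $h=\prod_{i=1}^{s}\bigl(m/\gcdi(h_i,m)\bigr)^{2}$ combined with Proposition \ref{quan} gives the stated formula with $s=(n-\dime(\kere H))/2$. The paper itself does not prove this lemma but quotes it from \cite[Lemma 5.7]{ar}; your derivation is precisely the standard argument behind that reference, so there is nothing further to reconcile.
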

Let us mention that Haynel applied Corollary $(4.7)$ of \cite{hh} and Proposition (\ref{quan}), a result of De Concini and Procesi, to compute PI degree explicitly of some quantum algebras such as the coordinate ring of odd/even-dimensional quantum Euclidean space, quantum symplectic space, coordinate ring of quantum matrices (uniparameter) and quantized Weyl algebras (uniparameter). \par In the next section, we will focus on computing the PI degree of multiparameter quantum Weyl algebras. 
\section{{PI degree for Quantum Weyl Algebras and its prime factors}}\label{lpi}
Let us first recall the assumption (\ref{asm}) on multiparameters. Hence the quantum Weyl algebra ${{A}_n^{\underline{q},\Lambda}}$ is a PI algebra. This section aims to calculate an explicit expression of PI degree for quantum Weyl algebras and some prime factor of it, with the assumption (\ref{asm}). Here we will use the derivation erasing process as in Proposition \ref{dr} and then a key technique for calculating the PI degree of quantum affine space as in Proposition \ref{quan}.
\subsection{PI degree for \texorpdfstring{${{A}_n^{\underline{q},\Lambda}}$}{TEXT}} Based on the iterated skew polynomial presentation of the quantum Weyl algebra ${{A}_n^{\underline{q},\Lambda}}$, we are now ready to use the derivation erasing independent of characteristic due to by A. Leroy and J. Matczuk \cite{lm2}.\\ 
\textbf{Step 1:} (Deleting Derivation) A straightforward calculation shows that the $\mathbb{K}$-linear maps $\tau_j$ and $\delta_j$ satisfy the skew relation $\delta_j\tau_j=q_j\tau_j\delta_j$ $(q_j\neq 1)$. Thus all the hypotheses of the derivation erasing process as in Proposition \ref{dr} are satisfied by the PI algebra ${{A}_n^{\underline{q},\Lambda}}$. Hence it follows that $\pideg {{A}_n^{\underline{q},\Lambda}}= \pideg \mathcal{O}_{B}(\mathbb{K}^{2n})$, where the $2n\times 2n$ matrix of relations $B$ is comprised of $2\times 2$ blocks:
\[B_{ii}=
\begin{pmatrix} 
1 & q_i^{-1} \\
q_i & 1
\end{pmatrix},\ \
B_{ij}=\begin{pmatrix}
\lambda_{ij}&q_i^{-1}\lambda_{ij}^{-1}\\
\lambda_{ij}^{-1}&q_i\lambda_{ij}
\end{pmatrix}\ i<j,\ \
B_{ji}=\begin{pmatrix}
\lambda_{ij}^{-1}&\lambda_{ij}\\
q_i\lambda_{ij}&q_i^{-1}\lambda_{ij}^{-1}
\end{pmatrix}\ i<j.
\]
\textbf{Step 2:} (Integral Matrix) Now we form an integral matrix associated with this $B$ under the assumption (\ref{asm}). Then the multiplicative group $\Gamma$ generated by all the multiparameters $q_i$ and $\lambda_{ij}$ is a cyclic group of order $l_n$ and suppose $q$ be a generator of the group $\Lambda$. Note that we have a chain of cyclic subgroups \[\langle q_1\rangle\subseteq\langle q_2\rangle\subseteq\cdots\subseteq\langle q_n\rangle=\langle q\rangle\] due to the divisibility condition $l_1|\cdots|l_n$. Therefore we can choose $a_i:=\frac{l_{i+1}}{l_i}$ such that $\langle q_i\rangle=\langle q_{i+1}^{a_i}\rangle$ for all $1\leq i\leq n-1$. Set $a_n=1$. So there exist integers $b_i,b_{ij}$ such that \[q_i=q_{i+1}^{a_ib_i},\ q_n=q^{a_nb_n}\  \text{and} \ \lambda_{ij}=q_i^{b_{ij}}=q_{i+1}^{a_ib_ib_{ij}}\ \ \forall\  1\leq i\leq j\leq n.\] Note that $\gcdi(b_i,l_i)=1$ for all $1\leq i\leq n$. Define 
\[s_i:=a_i\cdots a_n,\ k_i:=b_i\cdots b_n,\ k_{ij}:=b_{ij}\ \ \forall \ 1\leq i\leq j\leq n.\] Thus we can write
\[q_i=q^{s_ik_i}\  \text{and} \ \lambda_{ij}=q_i^{k_{ij}}=q^{s_ik_ik_{ij}}\ \ \forall \ 1\leq i\leq j\leq n.\]
Now the powers of this $q$ from the matrix $B$ give a $2n\times 2n$ integer matrix $B^{'}$ comprised of $2\times 2$  blocks
\begin{align*}
&B^{'}_{ii}=
\begin{pmatrix} 
0 & -s_ik_i \\
s_ik_i & 0
\end{pmatrix},\ \
B^{'}_{ij}=
\begin{pmatrix}
s_ik_ik_{ij}&-s_ik_i-s_ik_ik_{ij}\\
-s_ik_ik_{ij}&s_ik_i+s_ik_ik_{ij}
\end{pmatrix}\ i<j,\\
&B^{'}_{ji}=
\begin{pmatrix}
-s_ik_ik_{ij}& s_ik_ik_{ij}\\
s_ik_i+s_ik_ik_{ij}&-s_ik_i-s_ik_ik_{ij}
\end{pmatrix}\ i<j.
\end{align*}
\textbf{Step 3:} (Final Step) The PI degree of ${{A}_n^{\underline{q},\Lambda}}$ can be calculated using the integral matrix $B'$ from Proposition \ref{quan}. The cardinality of the image of the homomorphism 
\begin{equation}\label{pihom}
   \mathbb{Z}^{2n} \xrightarrow{B'} \mathbb{Z}^{2n} \xrightarrow{\pi} \left(\mathbb{Z}/l_n\mathbb{Z}\right)^{2n}, 
\end{equation} determined by the integral matrix $B'$ remains unchanged if we perform certain elementary reductions on $B'$ beforehand. Let us apply the following sequence of elementary operations on $B^{'}$ to reduce it into a simpler form:
\begin{itemize}
    \item For $i=2,4,\cdots, 2n$, replace $\rowa(i)$ with $\rowa(i) + \rowa(i-1)$.
    \item For $i=2,4,\cdots, 2n$, replace $\colu(i)$ with $\colu(i) + \colu(i-1)$.
    \item For $1\leq i < j \leq n$, replace $\rowa(2j-1)$ by $\rowa(2j-1)+k_{ij}\rowa(2i)$ and replace $\rowa(2j)$ with $\rowa(2j)-\rowa(2i)$.
    \item For $1\leq i < j \leq n$, replace $\colu(2j-1)$ by $\colu(2j-1)+k_{ij}\colu(2i)$ and replace $\colu(2j)$ with $\colu(2j)-\colu(2i)$.
\end{itemize}
Then we have a $2n\times 2n$ integral matrix $B^{''}$ of this form 
$$B^{''}=\diagonal\left( \begin{pmatrix} 
0 & -s_1k_1 \\
s_1k_1 & 0
\end{pmatrix},\cdots,\begin{pmatrix} 
0 & -s_nk_n \\
s_nk_n & 0
\end{pmatrix}\right).$$
Therefore the cardinally of the image of homomorphism (\ref{pihom}) induced by this integral matrix $B''$ is given by $\displaystyle\prod\limits_{i=1}^{n} [\ord(s_ik_i)]^2$, where order is taken in the additive group $\mathbb{Z}/{l_n\mathbb{Z}}$.
Thus we have 
\[\pideg {A}_n^{\underline{q},\Lambda}=\displaystyle\prod\limits_{i=1}^{n} \ord(s_ik_i)=\displaystyle\prod\limits_{i=1}^{n} \frac{l_n}{\gcdi(s_ik_i,l_n)}=\displaystyle\prod\limits_{i=1}^{n} \ord (q_i)=\displaystyle\prod\limits_{i=1}^{n} l_i.\]
Finally, we have proved the following
\begin{theo}\label{pitheorem}
The PI degree of ${A}_n^{\underline{q},\Lambda}$ is given by\[\pideg{{A}_n^{\underline{q},\Lambda}}=\prod\limits_{i=1}^nl_i,\] under the roots of unity assumption (\ref{asm}) on the defining multiparameters.  
\end{theo}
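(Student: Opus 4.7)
The plan is to reduce the problem to computing the PI degree of a quantum affine space via the derivation erasing process (Proposition \ref{dr}), and then to apply the De Concini--Procesi formula (Proposition \ref{quan}). First I would verify the hypotheses of Proposition \ref{dr} on the iterated skew polynomial presentation of $A_n^{\underline{q},\Lambda}$: the automorphisms $\tau_j$ and $\sigma_j$ act diagonally on the previously adjoined generators, and a short calculation yields the $q_j$-skew relation $\delta_j \tau_j = q_j \tau_j \delta_j$ with $q_j \ne 1$. Consequently $\pideg A_n^{\underline{q},\Lambda} = \pideg \mathcal{O}_B(\mathbb{K}^{2n})$, where $B$ is the $2n \times 2n$ commutation matrix on the generators $y_1, x_1, \ldots, y_n, x_n$ obtained by stripping the derivations from the presentation.

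Next I would package all defining parameters as powers of a single primitive $l_n$-th root of unity $q$. The divisibility chain $l_1 \mid l_2 \mid \cdots \mid l_n$ from (\ref{asm}) forces the inclusions $\langle q_1\rangle \subseteq \cdots \subseteq \langle q_n\rangle = \langle q\rangle$, and choosing integers $s_i$ (built from the ratios $l_{i+1}/l_i$), $k_i$ (coprime to $l_i$), and $k_{ij}$ allows the uniform encoding $q_i = q^{s_i k_i}$ and $\lambda_{ij} = q^{s_i k_i k_{ij}}$. This in turn yields an integer exponent matrix $B'$, and Proposition \ref{quan} expresses the desired PI degree as $\sqrt{h}$, where $h$ is the cardinality of the image of the map $B' \colon \mathbb{Z}^{2n} \to (\mathbb{Z}/l_n\mathbb{Z})^{2n}$.

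The main technical obstacle is simplifying $B'$. Because $h$ is unchanged under unimodular row and column operations, I would first add the two rows (and the two columns) within each diagonal $2 \times 2$ block to expose the core skew entry $\pm s_i k_i$. The key observation is that, after this normalisation, each off-diagonal block $B'_{ij}$ becomes proportional (with factor $k_{ij}$) to the corresponding diagonal block $B'_{ii}$, so a further sequence of cross-block row and column sweeps annihilates all off-diagonal entries. The reduced matrix is block-diagonal skew with diagonal $2 \times 2$ blocks $\begin{pmatrix} 0 & -s_i k_i \\ s_i k_i & 0 \end{pmatrix}$, whence $h = \prod_{i=1}^n \ord(s_i k_i)^2$ inside $\mathbb{Z}/l_n\mathbb{Z}$. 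Since $q^{s_i k_i} = q_i$ has multiplicative order $l_i$, the additive order $\ord(s_i k_i)$ in $\mathbb{Z}/l_n\mathbb{Z}$ equals $l_i$, and Proposition \ref{quan} finally yields $\pideg A_n^{\underline{q},\Lambda} = \sqrt{h} = \prod_{i=1}^n l_i$.
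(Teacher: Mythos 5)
Your proposal is correct and follows essentially the same route as the paper's proof: derivation erasing via Proposition \ref{dr} to pass to $\mathcal{O}_B(\mathbb{K}^{2n})$, encoding all parameters as powers of a single primitive $l_n$-th root of unity through the divisibility chain (with the same $s_i$, $k_i$, $k_{ij}$ data), reducing the integer matrix by unimodular in-block and cross-block operations to the skew blocks $\begin{pmatrix}0 & -s_ik_i\\ s_ik_i & 0\end{pmatrix}$, and concluding via Proposition \ref{quan} since $\ord(s_ik_i)=l_i$ in $\mathbb{Z}/l_n\mathbb{Z}$. No substantive differences to report.
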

\begin{rema}\normalfont
In view of Proposition \ref{sim}, it is worth noting that, there exists a simple ${A}_n^{\underline{q},\Lambda}$-module with a $\mathbb{K}$-dimension of $\prod\limits_{i=1}^nl_i$. The subsequent Section \ref{intqwa} will provide an explicit construction and classification of these simple modules.
\end{rema}
\subsection{PI degree for Prime Factors of \texorpdfstring{${{A}_n^{\underline{q},\Lambda}}$}{TEXT}}\label{sr} Fix an index $r$ with $1\leq r\leq n$. Then the ideal generated by the normal element $z_r:=x_ry_r-y_rx_r$ is a completely prime ideal of ${{A}_n^{\underline{q},\Lambda}}$ (see \cite[Proposition 4.5]{aj}) and hence the factor algebra ${{A}_n^{\underline{q},\Lambda}}/\langle z_r \rangle$ is a domain. Therefore the algebra ${{A}_n^{\underline{q},\Lambda}}/\langle z_r \rangle$ is a prime affine PI algebra under the root of unity assumption (\ref{asm}). Here, our objective is to explicitly determine the PI degree for the factor algebra ${{A}_n^{\underline{q},\Lambda}}/\langle z_r \rangle$.\\
\noindent\textbf{Step 1:} Let $R_r$ denote the $\mathbb{K}$-algebra generated by the variables $X_1,Y_1,\cdots,X_n,Y_n$ subject to the relations:
\begin{align}\label{qwadr1}
Y_iY_j&=\lambda_{ij}Y_jY_i \ \ \ \ \ \forall \ \ \ 1\leq i<j\leq n\nonumber\\
X_iX_j&=q_i\lambda_{ij}X_jX_i \ \ \ \ \ \forall \ \ \ 1\leq i<j\leq n\nonumber\\
X_iY_j&=\lambda_{ij}^{-1}Y_jX_i\ \ \ \ \ \forall \ \ \ 1\leq i<j\leq n\\
Y_iX_j&=q_i^{-1}\lambda_{ij}^{-1}X_jY_i \ \ \ \ \ \forall \ \ \ 1\leq i<j\leq n\nonumber\\
X_rY_r&=Y_rX_r\nonumber\\
X_iY_i-q_iY_iX_i&=1+\sum_{k=1}^{i-1}(q_k-1)Y_kX_k \ \ \ \ \ \forall \ \ \ 1\leq i\neq r\leq n\nonumber
\end{align}
The algebra $R_r$ is similar to quantum Weyl algebra except for the defining relation between $x_r$ and $y_r$. The algebra $R_r$ is also a PI algebra in the root of unity setting. The following result establishes a connection between the algebras ${{A}_n^{\underline{q},\Lambda}}$ and $R_{r}$.
\begin{lemm}\label{cpfqwa}
The factor algebra ${{A}_n^{\underline{q},\Lambda}}/\langle z_r \rangle$ is isomorphic to the factor algebra $R_{r}/I_{r}$, where $I_r$ is the ideal of $R_r$ generated by $1+\sum\limits_{i=1}^{r}(q_i-1)Y_iX_i$. 
\end{lemm}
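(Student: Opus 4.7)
The plan is to exhibit an explicit isomorphism between the two quotient algebras by defining mutually inverse maps on generators, checking that each is well defined by verifying the defining relations.

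First I would construct a $\mathbb{K}$-algebra homomorphism $\phi : R_r \to {{A}_n^{\underline{q},\Lambda}}/\langle z_r\rangle$ sending $X_i \mapsto \bar x_i$ and $Y_i \mapsto \bar y_i$. All the commutation relations among generators with distinct indices (the first four families in \eqref{qwadr1}) are shared verbatim with ${{A}_n^{\underline{q},\Lambda}}$, so they hold in the quotient. The extra relation $X_rY_r = Y_rX_r$ of $R_r$ holds because $\bar z_r = \overline{x_ry_r - y_rx_r} = 0$ in ${{A}_n^{\underline{q},\Lambda}}/\langle z_r\rangle$. The last family of relations (for $i\neq r$) is identical to those in ${{A}_n^{\underline{q},\Lambda}}$. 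Hence $\phi$ is a well-defined algebra map. Using the identity $z_r = 1+\sum_{i=1}^{r}(q_i-1)y_ix_i$ recorded in the excerpt before Proposition \ref{l1}, the image under $\phi$ of the generator $1 + \sum_{i=1}^{r}(q_i-1)Y_iX_i$ of $I_r$ equals $\bar z_r = 0$. Thus $\phi$ descends to $\bar\phi : R_r/I_r \to {{A}_n^{\underline{q},\Lambda}}/\langle z_r\rangle$.

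Next I would build the inverse by defining $\psi : {{A}_n^{\underline{q},\Lambda}} \to R_r/I_r$ via $x_i \mapsto \bar X_i$ and $y_i \mapsto \bar Y_i$. Again, all relations involving distinct indices and the relations $x_iy_i - q_iy_ix_i = 1 + \sum_{k=1}^{i-1}(q_k-1)y_kx_k$ for $i\neq r$ transfer directly. The only relation requiring work is the case $i=r$: we must verify that $\bar X_r\bar Y_r - q_r \bar Y_r\bar X_r = 1+\sum_{k=1}^{r-1}(q_k-1)\bar Y_k\bar X_k$ in $R_r/I_r$. Since $X_rY_r = Y_rX_r$ in $R_r$, the left-hand side collapses to $(1-q_r)\bar Y_r\bar X_r$, and after rearrangement the required identity is exactly $1 + \sum_{k=1}^{r}(q_k-1)\bar Y_k\bar X_k = 0$, which holds in $R_r/I_r$ by the definition of $I_r$. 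Thus $\psi$ is well-defined, and $\psi(z_r) = \bar X_r\bar Y_r - \bar Y_r\bar X_r = 0$, so $\psi$ factors through a map $\bar\psi : {{A}_n^{\underline{q},\Lambda}}/\langle z_r\rangle \to R_r/I_r$.

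Finally, $\bar\phi \circ \bar\psi$ and $\bar\psi \circ \bar\phi$ act as the identity on the generating sets $\{\bar x_i,\bar y_i\}$ and $\{\bar X_i,\bar Y_i\}$ respectively, so both compositions are the identity, proving the isomorphism. There is no genuine obstacle here; the only point that requires care is the bookkeeping that confirms the single modified defining relation of ${{A}_n^{\underline{q},\Lambda}}$ at index $r$ corresponds precisely to the pair (commutativity of $X_r,Y_r$ and the vanishing of $1+\sum_{i=1}^r(q_i-1)Y_iX_i$), which is exactly the content of the identity $z_r = 1+\sum_{i=1}^r(q_i-1)y_ix_i$ combined with $x_ry_r - q_ry_rx_r = z_{r-1}$.
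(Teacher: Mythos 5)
Your proposal is correct and follows essentially the same route as the paper: both directions are handled by defining maps on generators, checking the defining relations (with the key point being the identity $z_r=1+\sum_{i=1}^{r}(q_i-1)y_ix_i$ and the collapse of the $i=r$ relation), and passing to the quotients to obtain mutually inverse homomorphisms. Your write-up simply makes explicit the relation checks that the paper leaves as ``easily verified.''
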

\begin{proof}
Define a $\mathbb{K}$-linear map $\Phi:{{A}_n^{\underline{q},\Lambda}}\longrightarrow R_{r}/I_{r}$ as follows: \[x_i\mapsto X_i+I_r,\ y_i\mapsto Y_i+I_{r}.\] We can easily verify that $\Phi$ is an algebra homomorphism with $\langle z_r \rangle\subseteq \kere(\Phi)$. Therefore by the universal property of quotient algebras, there is a unique $\mathbb{K}$-algebra homomorphism $\overline{\Phi}:{{A}_n^{\underline{q},\Lambda}}/\langle z_r \rangle\longrightarrow R_{r}/I_{r}$ such that 
\[\overline{\Phi}(x_i+\langle z_r \rangle)=X_i+I_r,\ \ \overline{\Phi}(y_i+\langle z_r \rangle)=Y_i+I_r\ \ \forall\ \ 1\leq i\leq n.\]
\par On the other hand, define a $\mathbb{K}$-linear map $\Psi:R_{r}\longrightarrow {{A}_n^{\underline{q},\Lambda}}/\langle z_r \rangle$ as follows:
\[X_i\mapsto x_i+\langle z_r \rangle,\ Y_i\mapsto y_i+\langle z_r \rangle.\] We can easily check that $\Psi$ is an algebra homomorphism such that $I_r\subseteq \kere(\Psi)$. Thus by the universal property of quotient algebras, we have a unique $\mathbb{K}$-algebra homomorphism $\overline{\Psi}:R_{r}/I_{r}\longrightarrow {{A}_n^{\underline{q},\Lambda}}/\langle z_r \rangle$ such that 
\[\overline{\Psi}(X_i+I_r)=x_i+\langle z_r \rangle,\ \ \overline{\Psi}(Y_i+I_r)=y_i+\langle z_r \rangle\ \ \forall\ \ 1\leq i\leq n.\]
Therefore $\overline{\Phi}$ and $\overline{\Psi}$ are inverse to each other and the lemma follows.
\end{proof}
Now based on Lemma \ref{cpfqwa} and Remark \ref{piquotient}, we can conclude that 
\begin{equation}\label{noteq}
    \pideg ({{A}_n^{\underline{q},\Lambda}}/\langle z_r \rangle)=\pideg{(R_r/I_r)}\leq \pideg{(R_r)}.
\end{equation} 
\noindent\textbf{Step 2:} Our focus now shifts to computing the PI degree for the algebra $R_r$. Similar to the quantum Weyl algebra, the algebra $R_r$ has an iterated skew polynomial presentation with respect to the ordering of variables $Y_1,X_1,\cdots,Y_n,X_n$ of the form: 
\begin{equation}\label{oc}
  \mathbb{K}[Y_1][X_1,\tau_1,\delta_1][Y_2,\sigma_2][X_2,\tau_2,\delta_2]\cdots [Y_n,\sigma_n][X_n,\tau_n.\delta_n]  
\end{equation}
where the $\tau_j$ and $\sigma_{j}$ are $\mathbb{K}$-linear automorphisms and the $\delta_j$ are $\mathbb{K}$-linear $\tau_j$-derivations  such that
\begin{align*}
 \tau_j(Y_i)&=q_i\lambda_{ij}Y_i,\ i<j  & \sigma_j(Y_i)&=\lambda_{ij}^{-1}Y_i,\ i<j\\
 \tau_j(X_i)&=q_i^{-1}\lambda_{ij}^{-1}X_i,\ i<j & \sigma_j(X_i)&=\lambda_{ij}X_i,\ i<j\\
 \tau_j(Y_j)&=q_jY_j,\ \forall~j\neq r & \tau_r(Y_r)&=Y_r\\
  \delta_j(Y_j)&=1+\sum\limits_{i<j}(q_i-1)Y_iX_i,\ \forall ~j\neq r& \delta_r(Y_r)&=0
\end{align*}
We can easily verify that the $\mathbb{K}$-linear maps $\tau_j$ and $\delta_j$ satisfy the skew relation $\delta_j\tau_j=q_j\tau_j\delta_j$ $(q_j\neq 1)$ from this iterated skew polynomial presentation of $R_r$. Hence, all the hypotheses of the derivation erasing process as in \cite[Theorem 7.1]{lm2} are satisfied by the PI algebra $R_r$. Therefore we obtain $\pideg (R_r)= \pideg \mathcal{O}_{C}(\mathbb{K}^{2n})$, where the $2n\times 2n$ matrix of relations $C$ is comprised of $2\times 2$ blocks 
\begin{align*}
C_{ii}&=
\begin{pmatrix} 
1 & q_i^{-1} \\
q_i & 1
\end{pmatrix}\ i\neq r &
C_{ij}&=\begin{pmatrix}
\lambda_{ij}&q_i^{-1}\lambda_{ij}^{-1}\\
\lambda_{ij}^{-1}&q_i\lambda_{ij}
\end{pmatrix}\ i<j \\
C_{rr}&=\begin{pmatrix} 
1 & 1 \\
1 & 1
\end{pmatrix}&
C_{ji}&=\begin{pmatrix}
\lambda_{ij}^{-1}&\lambda_{ij}\\
q_i\lambda_{ij}&q_i^{-1}\lambda_{ij}^{-1}
\end{pmatrix}\ i<j.    
\end{align*}
Now as in the step 2 of Section \ref{lpi}, the integral matrix $C'$ associated with $C$ under the assumption (\ref{asm}) is comprised of $2\times 2$ blocks
\begin{align*}
C^{'}_{ii}&=
\begin{pmatrix} 
0 & -s_ik_i \\
s_ik_i & 0
\end{pmatrix}\ i\neq r &  
C^{'}_{ij}&=
\begin{pmatrix}
s_ik_ik_{ij}&-s_ik_i-s_ik_ik_{ij}\\
-s_ik_ik_{ij}&s_ik_i+s_ik_ik_{ij}
\end{pmatrix}\ i<j\\
C^{'}_{rr}&=
\begin{pmatrix} 
0 & 0 \\
0 & 0
\end{pmatrix}&
C^{'}_{ji}&=
\begin{pmatrix}
-s_ik_ik_{ij}& s_ik_ik_{ij}\\
s_ik_i+s_ik_ik_{ij}&-s_ik_i-s_ik_ik_{ij}
\end{pmatrix}\ i<j.
\end{align*}
The $\pideg R_r$ can be computed using the integral matrix $C'$ as stated in Proposition \ref{quan}. However, we can perform certain elementary reductions on $C'$ before using it to compute the cardinality of the image of the homomorphism 
\begin{equation}\label{piqwa1}
   \mathbb{Z}^{2n} \xrightarrow{C'} \mathbb{Z}^{2n} \xrightarrow{\pi} \left(\mathbb{Z}/l_n\mathbb{Z}\right)^{2n}, 
\end{equation} and this will not change the result. We note that our choice of $s_i$ and $k_i$ satisfies $s_nk_n|\cdots|s_1k_1$, which we will use in the following operations. To simplify $C'$, we will perform the following elementary operations: 
\begin{itemize}
    \item For $i=2,4,\cdots, 2n$, replace $\rowa(i)$ with $\rowa(i) - \rowa(i-1)$.
    \item For $i=2,4,\cdots, 2n$, replace $\colu(i)$ with $\colu(i) - \colu(i-1)$.
    \item For $1\leq i < j \leq n$ and $i<r$, replace $\rowa(2j-1)$ by $\rowa(2j-1)+k_{ij}\rowa(2i)$ and replace $\rowa(2j)$ with $\rowa(2j)-\rowa(2i)$.
    \item For $1\leq i < j \leq n$ and $i<r$, replace $\colu(2j-1)$ by $\colu(2j-1)+k_{ij}\colu(2i)$ and replace $\colu(2j)$ with $\colu(2j)-\colu(2i)$.
    \item For $1\leq i < j \leq n$ and $r<i$, replace $\rowa(2j)$ by $\rowa(2j)-\rowa(2i)$ and replace $\colu(2j)$ with $\colu(2j)-\colu(2i)$.
    \item For $r+2\leq j\leq n$ and $r\leq i<j$, replace $\rowa(2i-1)$ with \[\rowa(2i-1)-\displaystyle\frac{s_ik_ik_{ij}}{s_jk_j}\rowa(2j)\] and replace $\colu(2i-1)$ with \[\colu(2i-1)-\displaystyle\frac{s_ik_ik_{ij}}{s_jk_j}\colu(2j).\]
    \item Replace $\rowa(2r-1)$ by \[\rowa(2r-1)-\displaystyle\frac{s_rk_rk_{r(r+1)}}{s_{r+1}k_{r+1}}\rowa(2r+2)\] and replace $\colu(2r-1)$ by \[\colu(2r-1)-\displaystyle\frac{s_rk_rk_{r(r+1)}}{s_{r+1}k_{r+1}}\colu(2r+2),\] then replace $\rowa(2r-1)$ with \[\rowa(2r-1)-\displaystyle\frac{s_rk_r}{s_{r+1}k_{r+1}}\rowa(2r+1)\] and replace $\colu(2r-1)$ by \[\colu(2r-1)-\displaystyle\frac{s_rk_r}{s_{r+1}k_{r+1}}\colu(2r+1).\]
\end{itemize}
Finally, we obtain a $2n\times 2n$ integer matrix $C^{''}$ with the following diagonal form 
$$C^{''}=\diagonal\left( \begin{pmatrix} 
0 & -s_1k_1 \\
s_1k_1 & 0
\end{pmatrix},\cdots,\begin{pmatrix} 
0 & 0 \\
0 & 0
\end{pmatrix},\cdots,\begin{pmatrix} 
0 & -s_nk_n \\
s_nk_n & 0
\end{pmatrix}\right).$$
Then the cardinality of the image of the homomorphism (\ref{piqwa1}), induced by the matrix $C''$, can be expressed as $\displaystyle\prod\limits_{\substack{i=1\\i\neq r}}^{n} \left(\ord(s_ik_i)\right)^2$, where order is taken in the additive group $\mathbb{Z}/{l_n\mathbb{Z}}$.
Consequently, based on the aforementioned discussions and Proposition \ref{quan}, we can deduce the following:
\[\pideg (R_r)=\displaystyle\prod\limits_{\substack{i=1\\i\neq r}}^{n} \ord(s_ik_i)=\displaystyle\prod\limits_{\substack{i=1\\i\neq r}}^{n} \frac{l_n}{\gcdi(s_ik_i,l_n)}=\displaystyle\prod\limits_{\substack{i=1\\i\neq r}}^{n} \ord (q_i)=\displaystyle\prod\limits_{\substack{i=1\\i\neq r}}^{n} l_i.\] 
Thus from (\ref{noteq}), we can conclude that 
\begin{equation}\label{ineq}
\pideg ({{A}_n^{\underline{q},\Lambda}}/\langle z_r \rangle)\leq \displaystyle\prod\limits_{\substack{i=1\\i\neq r}}^{n} l_i.
\end{equation}
\textbf{Step 3:} Finally we aim to establish the equality in (\ref{ineq}). For simplicity let \[X_i:=x_i+\langle z_r \rangle\ \ \text{and}\ \  Y_i:=y_i+\langle z_r \rangle\] denote the cosets of $x_i$ and $y_i$ in ${{A}_n^{\underline{q},\Lambda}}/\langle z_r \rangle$ respectively. Suppose $A(r)$ be the subalgebra of the factor algebra ${{A}_n^{\underline{q},\Lambda}}/\langle z_r \rangle$ generated by $X_i,Y_i$ for all $i\neq r$. Clearly $A(r)$ is the $\mathbb{K}$-algebra generated by the $(2n-2)$-variables $X_i,Y_i$ for all $1\leq i\neq r\leq n$ and the following relations:
\begin{align}
Y_iY_j&=\lambda_{ij}Y_jY_i \ \ \ \ \ \forall \ \ \ 1\leq i<j\leq n\nonumber\\
X_iX_j&=q_i\lambda_{ij}X_jX_i \ \ \ \ \ \forall \ \ \ 1\leq i<j\leq n\nonumber\\
X_iY_j&=\lambda_{ij}^{-1}Y_jX_i\ \ \ \ \ \forall \ \ \ 1\leq i<j\leq n\\
Y_iX_j&=q_i^{-1}\lambda_{ij}^{-1}X_jY_i \ \ \ \ \ \forall \ \ \ 1\leq i<j\leq n\nonumber\\
X_iY_i-q_iY_iX_i&=1+\sum_{k=1}^{i-1}(q_k-1)Y_kX_k \ \ \ \ \ \forall \ \ \ 1\leq i\leq r-1\nonumber\\
X_{r+1}Y_{r+1}&=q_{r+1}Y_{r+1}X_{r+1}\nonumber\\
X_iY_i-q_iY_iX_i&=\sum_{k=r+1}^{i-1}(q_k-1)Y_kX_k \ \ \ \ \ \forall \ \ \ r+2\leq i\leq n\nonumber
\end{align}
As a result, based on Remark \ref{piquotient}, we have: 
\begin{equation}\label{subfactor1}
    \pideg({A}(r))\leq \pideg({{A}_n^{\underline{q},\Lambda}}/\langle z_r \rangle)
\end{equation}
Moreover, applying the same reasoning as discussed in Step 2 of Subsection \ref{sr}, we can obtain that
\begin{equation}\label{subfactor2}
    \pideg({{A}(r)})=\prod\limits_{\substack{i=1\\ i\neq r}}^{n}l_i
\end{equation} Therefore by combining the relations (\ref{ineq}), (\ref{subfactor1}) and (\ref{subfactor2}), we can establish the following result:
\begin{theo}\label{piin}
Assuming the root of unity assumption (\ref{asm}) on the defining multiparameters, the PI degree of a prime factor of ${A}_n^{\underline{q},\Lambda}$ by the normal element $z_r$ is given by $\displaystyle\prod\limits_{\substack{i=1\\i\neq r}}^{n} l_i$.
\end{theo}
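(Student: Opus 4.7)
The upper bound is already in hand: by Lemma \ref{cpfqwa} together with Remark \ref{piquotient}(2) we have $\pideg({A}_n^{\underline{q},\Lambda}/\langle z_r \rangle) \leq \pideg(R_r)$, and the integral-matrix computation in Step 2 showed $\pideg(R_r)=\prod_{i\neq r} l_i$, giving (\ref{ineq}). So the whole task reduces to producing a matching lower bound, and for this the plan is to exhibit a subalgebra of ${A}_n^{\underline{q},\Lambda}/\langle z_r \rangle$ whose PI degree is exactly $\prod_{i\neq r}l_i$.

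The natural candidate, suggested by the shape of the excerpt, is the subalgebra $A(r)\subseteq {A}_n^{\underline{q},\Lambda}/\langle z_r \rangle$ generated by $\{X_i,Y_i : i\neq r\}$, where $X_i,Y_i$ denote the cosets of $x_i,y_i$. First I would verify that $A(r)$ is presented by the relations listed in Step 3 of the excerpt; this follows because the relations of ${A}_n^{\underline{q},\Lambda}$ involving only indices $i\neq r$ descend to $A(r)$, and the relation $X_rY_r=Y_rX_r$ in the quotient lets one solve for $Y_rX_r$ in the expression $z_{r+1}=z_r+(q_{r+1}-1)Y_{r+1}X_{r+1}$ to rewrite the $(r{+}1)$-st cross relation as $X_{r+1}Y_{r+1}=q_{r+1}Y_{r+1}X_{r+1}$, and inductively each later relation with $i\geq r+2$ collapses to the homogeneous form displayed. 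Once the presentation is checked, Remark \ref{piquotient}(1) gives $\pideg(A(r)) \leq \pideg({A}_n^{\underline{q},\Lambda}/\langle z_r \rangle)$, which is (\ref{subfactor1}).

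The content of the proof is then (\ref{subfactor2}): computing $\pideg(A(r))$. I would argue that $A(r)$ has an iterated skew polynomial presentation in the variables $Y_1,X_1,\ldots,\widehat{Y_r},\widehat{X_r},\ldots,Y_n,X_n$ (skipping the $r$-th pair) with automorphisms and $q_i$-skew derivations exactly as in the quantum Weyl algebra case, so the hypotheses of Proposition \ref{dr} are satisfied and $\pideg(A(r)) = \pideg\mathcal{O}_D(\mathbb{K}^{2n-2})$ where $D$ is the $(2n-2)\times(2n-2)$ matrix of commutation scalars obtained from the $B$-matrix of Step 1 by simply deleting the row-block and column-block of index $r$. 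The associated integral matrix $D'$ is then the matrix $B'$ with the $(2r-1)$-st and $(2r)$-th rows and columns deleted. Applying the same elementary row/column operations as in Step 3 of the subsection on $\pideg({A}_n^{\underline{q},\Lambda})$ (restricted to indices $i\neq r$) reduces $D'$ to the block diagonal form
\[
D''=\diagonal\!\left(\begin{pmatrix}0&-s_1k_1\\ s_1k_1&0\end{pmatrix},\ldots,\widehat{\begin{pmatrix}0&-s_rk_r\\ s_rk_r&0\end{pmatrix}},\ldots,\begin{pmatrix}0&-s_nk_n\\ s_nk_n&0\end{pmatrix}\right),
\]
and by Proposition \ref{quan} the PI degree equals $\prod_{i\neq r}\ord(s_ik_i)=\prod_{i\neq r}\ord(q_i)=\prod_{i\neq r}l_i$, establishing (\ref{subfactor2}). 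Combining (\ref{ineq}), (\ref{subfactor1}), and (\ref{subfactor2}) sandwiches $\pideg({A}_n^{\underline{q},\Lambda}/\langle z_r \rangle)$ between $\prod_{i\neq r}l_i$ and $\prod_{i\neq r}l_i$, yielding the theorem.

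The main obstacle I anticipate is the bookkeeping for the presentation of $A(r)$: one must carefully check that the defining relations of $A(r)$ really are the ones claimed, in particular that after killing $z_r$ the $(r{+}1)$-st and higher cross relations take the stated homogeneous shape, and that no additional relations sneak in. Once the presentation is pinned down, the derivation-erasing step and the matrix reduction are formally identical to the computations already carried out in Step 2 and the PI-degree computation of ${A}_n^{\underline{q},\Lambda}$ itself, so they are essentially routine; the only place where something genuinely new could go wrong is the verification that $A(r)$ is faithfully a subalgebra (so that its PI degree is a lower bound, via Remark \ref{piquotient}(1)), and this is immediate because $A(r)$ sits inside the prime PI ring ${A}_n^{\underline{q},\Lambda}/\langle z_r \rangle$.
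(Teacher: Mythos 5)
Your proposal follows the same route as the paper: the upper bound comes from the isomorphism ${A}_n^{\underline{q},\Lambda}/\langle z_r\rangle \cong R_r/I_r$ together with the derivation-erasing/integral-matrix computation of $\pideg(R_r)$, and the matching lower bound comes from the subalgebra $A(r)$ generated by the cosets $X_i,Y_i$ ($i\neq r$), whose PI degree is computed by the same erasing-and-reduction argument to be $\prod_{i\neq r}l_i$. This is exactly the paper's three-step proof, with your version merely spelling out the matrix reduction for $A(r)$ a bit more explicitly.
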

\noindent\textbf{Question:} As the factor algebra ${{A}_n^{\underline{q},\Lambda}}/\langle z_r\rangle$ is classified as a prime affine PI algebra, therefore by Proposition \ref{sim}, there exists a $z_r$-torsion simple ${{A}_n^{\underline{q},\Lambda}}$-module with $\mathbb{K}$-dimension $\prod\limits_{\substack{i=1\\i\neq r}}^{n} l_i$ for each $1\leq r\leq n$. It would be interesting to explore the construction of such simple modules and subsequently classify them. So let us give it a try!
\section{Maximal Dimensional Simple \texorpdfstring{${{A}_n^{\underline{q},\Lambda}}$}{TEXT}-modules}\label{intqwa} In view of Theorem \ref{piin}, the following result provides a necessary condition for an ${A}_n^{\underline{q},\Lambda}$-module to attain maximal $\mathbb{K}$-dimension.
\begin{theo}\label{onepart}
Let $M$ be a maximal $\mathbb{K}$-dimensional (i.e., equal to the $\pideg {{A}_n^{\underline{q},\Lambda}}$) simple ${{A}_n^{\underline{q},\Lambda}}$-module. Then $M$ is $z_i$-torsionfree for all $1\leq i\leq n$.
\end{theo}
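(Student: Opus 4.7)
The plan is to argue by contradiction. Suppose $M$ is a maximal $\mathbb{K}$-dimensional simple ${{A}_n^{\underline{q},\Lambda}}$-module, so $\dime_{\mathbb{K}}(M) = \pideg({{A}_n^{\underline{q},\Lambda}}) = \prod_{i=1}^n l_i$ by Theorem \ref{pitheorem}, but that $M$ is $z_r$-torsion for some index $1 \leq r \leq n$. Since $z_r$ is a normal element of ${{A}_n^{\underline{q},\Lambda}}$ by Proposition \ref{l1}, Lemma \ref{itn} applies and yields $M z_r = 0$.

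The key consequence is that $M$ descends to a simple module over the prime affine PI-algebra ${{A}_n^{\underline{q},\Lambda}}/\langle z_r \rangle$. Indeed, $\langle z_r \rangle \subseteq \ann_{{{A}_n^{\underline{q},\Lambda}}}(M)$, so the $A$-action on $M$ factors through the quotient, and $M$ remains simple as a module over the quotient. Then Proposition \ref{sim} forces the bound
\[
\dime_{\mathbb{K}}(M) \leq \pideg\bigl({{A}_n^{\underline{q},\Lambda}}/\langle z_r \rangle\bigr).
\]
By Theorem \ref{piin}, the right-hand side equals $\prod_{i \neq r} l_i$.

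Combining these observations gives $\prod_{i=1}^n l_i \leq \prod_{i \neq r} l_i$, which would force $l_r \leq 1$; but the assumption (\ref{asm}) guarantees $l_r \geq 2$ since $q_r \in \mathbb{K} \setminus \{0,1\}$. This is the required contradiction, so $M$ must be $z_r$-torsionfree. Since $r$ was arbitrary, $M$ is $z_i$-torsionfree for every $1 \leq i \leq n$.

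There is no real obstacle here: the whole argument is a short dimension-count once the two preparatory results, namely Theorem \ref{piin} (computing the PI degree of the prime factor ${{A}_n^{\underline{q},\Lambda}}/\langle z_r \rangle$) and Lemma \ref{itn} (the trivial/invertible dichotomy for normal-element actions on simples), are in place. The only small point to be careful about is to explicitly invoke the normality of $z_r$ before passing to the quotient, so that one knows $M$ is annihilated (not merely has nontrivial $z_r$-torsion) and is therefore genuinely a simple module over ${{A}_n^{\underline{q},\Lambda}}/\langle z_r \rangle$.
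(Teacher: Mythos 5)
Your proposal is correct and follows essentially the same route as the paper: pass to the quotient ${{A}_n^{\underline{q},\Lambda}}/\langle z_r\rangle$ via normality of $z_r$, bound $\dime_{\mathbb{K}}(M)$ by its PI degree using Proposition \ref{sim}, and contradict maximality via Theorem \ref{piin}. Your extra care in invoking Lemma \ref{itn} and noting $l_r\geq 2$ just makes explicit what the paper leaves implicit.
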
 
\begin{proof}
Suppose $M$ be a $z_r$-torsion simple $ {{A}_n^{\underline{q},\Lambda}}$-module for some $1\leq r\leq n$. As $z_r$ is normal element, $M$ becomes a simple module over the factor algebra ${{A}_n^{\underline{q},\Lambda}}/\langle z_r\rangle$. Note that the algebra ${{A}_n^{\underline{q},\Lambda}}/\langle z_r\rangle$ is prime affine PI algebra. Hence by Proposition \ref{sim} we have 
\[\dime_{\mathbb{K}}(M)\leq \pideg ({{A}_n^{\underline{q},\Lambda}}/\langle z_r\rangle).\]
Now by Theorem \ref{piin}, it follows that \[\dime_{\mathbb{K}}(M)\leq \pideg ({{A}_n^{\underline{q},\Lambda}}/\langle z_r\rangle)<\pideg ({{A}_n^{\underline{q},\Lambda}}).\] Thus we arrive at a contradiction. Hence we conclude that the maximal $\mathbb{K}$-dimensional simple ${{A}_n^{\underline{q},\Lambda}}$-modules are $z_i$-torsionfree for all $1\leq i\leq n$.
\end{proof} 
In the following subsections, we aim to prove the converse of the above theorem, by classifying all $z_i$-torsionfree simple ${{A}_n^{\underline{q},\Lambda}}$-modules. Denote $[1,n]:=\{1,2,\cdots,n\}$ for $n\in \mathbb{N}$.
\subsection{Construction of Simple Modules:}
For any two subsets $I$ and $J$ of indices $[1,n]$, let $\underline{\mu}(I,J):=(\mu_1,\cdots,\mu_n)\in \mathbb{K}^n$ such that $\mu_i=0$ iff $i\in I\cap J$ and let $\underline{\gamma}(I):=(\gamma_1,\cdots,\gamma_n)\in \mathbb{({K}^*)}^{n}$ such that $q_i\gamma_i=\gamma_{i-1}$ if $i\in I$. Given such $\underline{\mu}(I,J)$ and $\underline{\gamma}(I)$, let $M\left(\underline{\mu}(I,J),\underline{\gamma}(I)\right)$ be the $\mathbb{K}$-vector space with basis consisting of \[\{e(\underline{a})~|~\underline{a}=(a_1,\cdots,a_n),~0\leq a_i \leq l_i-1,~ 1\leq i\leq n\}.\]
Let $\epsilon_i(\pm 1)$ denote the vector $(0,\cdots,{\pm 1},\cdots,0)$ whose $i$-th coordinate is $\pm 1$ and the remaining are zero. Define the ${{A}_n^{\underline{q},\Lambda}}$-module structure on the $\mathbb{K}$-space $M\left(\underline{\mu}(I,J),\underline{\gamma}(I)\right)$ by the action of each generator on the basis vectors as follows: for $1\leq i \leq n$,
\begin{align*}
e(\underline{a})x_i&=
\begin{cases}
    \mu_i\mathbb{A}(\underline{a};i)e(\underline{a}\oplus\epsilon_i(+1)),&\ i\in [1,n]\setminus I\\
    \mu_i^{-1}\mathbb{A}(\underline{a};i)\displaystyle\frac{(q_i^{-a_i}-1)\gamma_{i-1}}{q_i-1}~e(\underline{a}\oplus\epsilon_i(-1)),&\ i\in I\setminus (I\cap J)\\
    \mathbb{A}(\underline{a};i)\displaystyle\frac{(q_i^{-a_i}-1)\gamma_{i-1}}{q_i-1}~e(\underline{a}\oplus\epsilon_i(-1)),&\ a_i\neq 0,\ i\in I\cap J\\
      0,& a_i=0,~i\in I\cap J
\end{cases}\\
e(\underline{a})y_i&=
\begin{cases}
     \mu_i^{-1} \mathbb{B}(\underline{a};i)\displaystyle\frac{q^{a_i}_i\gamma_i-\gamma_{i-1}}{q_i-1}~e(\underline{a}\oplus\epsilon_i(-1)),&\ i\in [1,n]\setminus I\\
     \mu_i \mathbb{B}(\underline{a};i)~e(\underline{a}\oplus\epsilon_i(+1)),& \ i\in I\setminus (I\cap J)\\
     \mathbb{B}(\underline{a};i)~e(\underline{a}\oplus\epsilon_i(+1)),& a_i\neq l_i-1,\ i\in I\cap J\\
      0,& a_i=l_i-1,~i\in I\cap J
\end{cases}
\end{align*}
where $\gamma_0=1$ and $\oplus$ is addition in the additive group $\bigoplus_{i=1}^{n}\mathbb{Z}/l_i\mathbb{Z}$ and the scalars $\mathbb{A}(\underline{a};i)$ and $\mathbb{B}(\underline{a};i)$ are given by
\begin{align*}
\mathbb{A}(\underline{a};i)&=\begin{cases}
1, &i=1\\
 \displaystyle\left(\prod_{\substack {s<i \\ s \notin I}}
    (q_s\lambda_{si})^{a_s}\right)\left(\prod_{\substack {s<i \\ s \in I}}(q_s\lambda_{si})^{-a_s}\right),& 2\leq i\leq n
\end{cases}\\
\mathbb{B}(\underline{a};i)&=\begin{cases}
1,&i=1\\
\displaystyle\left(\prod_{\substack {s<i \\ s \notin I}}
    \lambda_{si}^{-a_s}\right)\left(\prod_{\substack {s<i \\ s \in I}}\lambda_{si}^{a_s}\right),&2\leq i\leq n
    \end{cases}
\end{align*}
In order to establish the well-definedness we need to check that the $\mathbb{K}$-endomorphisms of $M\left(\underline{\mu}(I,J),\underline{\gamma}(I)\right)$ defined by the above rules satisfy the defining relations of ${{A}_n^{\underline{q},\Lambda}}$. Indeed we can compute 
\begin{align*}
    e(\underline{a})x_iy_i&=\begin{cases}
    \mathbb{A}(\underline{a};i)\mathbb{B}(\underline{a}\oplus\epsilon_i(+1);i)\displaystyle\frac{q_i^{a_i+1}\gamma_i-\gamma_{i-1}}{q_i-1}e(\underline{a}),&i\in [1,n]\setminus I\\
     \mathbb{A}(\underline{a};i)\mathbb{B}(\underline{a}\oplus\epsilon_i(-1);i)\displaystyle\frac{(q_i^{-a_i}-1)\gamma_{i-1}}{q_i-1}e(\underline{a}),&a_i\neq 0, i\in I\\
     0,&a_i=0,i\in I
    \end{cases}\\
e(\underline{a})y_ix_i&=\begin{cases}
 \mathbb{A}(\underline{a}\oplus\epsilon_i(-1);i)\mathbb{B}(\underline{a};i)\displaystyle\frac{q_i^{a_i}\gamma_i-\gamma_{i-1}}{q_i-1}e(\underline{a}),&i\in [1,n]\setminus I\\
  \mathbb{A}(\underline{a}\oplus\epsilon_i(+1);i)\mathbb{B}(\underline{a};i)\displaystyle\frac{(q_i^{-(a_i+1)}-1)\gamma_{i-1}}{q_i-1}e(\underline{a}),& a_i\neq l_i-1,i\in I\\
  0,&a_i=l_i-1,i\in I\\
\end{cases}
\end{align*}
so that for each $i\in [1,n]$: \[e(\underline{a})(x_iy_i-q_iy_ix_i)=\displaystyle\left(\prod_{\substack {s<i \\ s \notin I}}q_s^{a_s}\right)\left(\prod_{\substack {s<i \\ s \in I}}q_s^{-a_s}\right)\gamma_{i-1}e(\underline{a})\] and 
\[e(\underline{a})\left(1+\sum_{k=1}^{i-1}(q_k-1)y_kx_k\right)=\displaystyle\left(\prod_{\substack {s<i \\ s \notin I}}q_s^{a_s}\right)\left(\prod_{\substack {s<i \\ s \in I}}q_s^{-a_s}\right)\gamma_{i-1}e(\underline{a}).\] The remaining relations can be easily verified. Therefore the above action on the $\mathbb{K}$-space $M\left(\underline{\mu}(I,J),\underline{\gamma}(I)\right)$ is well defined. Now we have the following result:
\begin{theo}
The module $M\left(\underline{\mu}(I,J),\underline{\gamma}(I)\right)$ is a simple ${{A}_n^{\underline{q},\Lambda}}$-module of dimension $\prod\limits_{i=1}^n l_i$.
\end{theo}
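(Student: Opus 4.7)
The plan is to establish the two assertions separately. The dimension claim is immediate, since the defining basis of $M\left(\underline{\mu}(I,J),\underline{\gamma}(I)\right)$ consists of exactly $\prod_{i=1}^n l_i$ vectors $e(\underline{a})$. For simplicity, I would argue in two stages: first, that any nonzero submodule $N$ must contain at least one basis vector $e(\underline{a})$; second, that from any single $e(\underline{a})$ every other basis vector can be reached by the action of the generators. Combining these stages forces $N = M\left(\underline{\mu}(I,J),\underline{\gamma}(I)\right)$.

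For stage one, I would exploit the normal central-like elements $z_i$. The displayed identity $e(\underline{a})(x_iy_i - q_iy_ix_i) = \left(\prod_{s<i,\,s\notin I} q_s^{a_s}\right)\left(\prod_{s<i,\,s\in I} q_s^{-a_s}\right)\gamma_{i-1}\,e(\underline{a})$ established in the course of the well-definedness check, together with the relation $z_{i-1} = x_iy_i - q_iy_ix_i$, shows that each $z_i$ acts diagonally on the chosen basis. Writing $\chi_i(\underline{a})$ for the $z_i$-eigenvalue of $e(\underline{a})$, the ratio $\chi_i(\underline{a})/\chi_{i-1}(\underline{a})$ depends on $a_i$ only through $q_i^{\pm a_i}$; since $q_i$ is a primitive $l_i$-th root of unity, varying $a_i$ over $\{0,1,\ldots,l_i-1\}$ with $a_1,\ldots,a_{i-1}$ fixed produces $l_i$ distinct values. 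Inducting on $i$ shows that the joint eigenvalues $(\chi_1(\underline{a}),\ldots,\chi_n(\underline{a}))$ separate the basis. A standard polynomial-projection argument applied to the commuting diagonalizable operators $z_1,\ldots,z_n$ then shows that any nonzero $v \in N$ can be projected to a nonzero multiple of some specific $e(\underline{a})$, forcing $e(\underline{a}) \in N$.

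For stage two, I would verify coordinate-by-coordinate that from any $e(\underline{a}) \in N$ every $e(\underline{a}\oplus\epsilon_i(k))$ lies in $N$. If $i \notin I$ then $\mu_i \neq 0$ (since $i \notin I\cap J$) and $\mathbb{A}(\underline{a};i)$ is a nonzero product of roots of unity, so $x_i$ sends $e(\underline{a})$ to a nonzero multiple of $e(\underline{a}\oplus\epsilon_i(+1))$; iterating cycles through all $l_i$ residues of $a_i$. If $i \in I\setminus J$ the symmetric argument with $y_i$ (scalar $\mu_i\mathbb{B}(\underline{a};i) \neq 0$) works. The delicate case is $i \in I\cap J$, where $\mu_i = 0$ makes $x_i$ annihilate at $a_i = 0$ and $y_i$ annihilate at $a_i = l_i - 1$, so the two operators form a finite chain rather than a cycle in the $a_i$-direction; nevertheless, starting from any $a_i$ I can first apply $y_i$ repeatedly (with nonzero scalar $\mathbb{B}(\underline{a};i)$) to raise $a_i$ up to $l_i-1$, then apply $x_i$ (with scalar $\mathbb{A}(\underline{a};i)(q_i^{-a_i}-1)\gamma_{i-1}/(q_i-1)$, which is nonzero for $1 \le a_i \le l_i-1$ by primitivity of $q_i$) to descend to any target value. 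Performing this in every coordinate shows $N$ contains every $e(\underline{b})$, hence $N$ equals the whole module.

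The main subtlety I anticipate is the $i \in I\cap J$ case in stage two: the $a_i$-action is no longer cyclic but a chain of length $l_i$ with both endpoints annihilated, so one must carefully check that the chain remains traversable from every interior point. The exact shape of the coefficient $(q_i^{-a_i}-1)$, which vanishes precisely at $a_i = 0$, is what makes the reduction work. Everything else is bookkeeping verifying that the $\mathbb{A}, \mathbb{B}$ prefactors never vanish and that iteration of the raising and lowering operators sweeps out the full cyclic group $\mathbb{Z}/l_i\mathbb{Z}$.
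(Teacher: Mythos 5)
Your proposal is correct and follows essentially the same route as the paper: any nonzero submodule must contain a basis vector because the $z_i$-eigenvalues separate the $e(\underline{a})$ (the paper implements this by a length-reduction induction using a single well-chosen $z_r$, you by spectral projections for the commuting diagonalizable $z_1,\dots,z_n$), after which the generator actions carry that basis vector to all others. Your stage two, including the careful treatment of the chain case $i\in I\cap J$, simply fills in details that the paper disposes of with a one-line assertion.
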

\begin{proof}
Let $P$ be a nonzero submodule of $M\left(\underline{\mu}(I,J),\underline{\gamma}(I)\right)$. We claim that $P$ contains a basis vector of the form $e(a_1,\cdots,a_n)$. Indeed, any member $p\in P$ is a finite $\mathbb{K}$-linear combination of such vectors. i.e.,
\[
  p:=\sum_{\text{finite}} \lambda_k~e\left(a_1^{(k)},\cdots,a_n^{(k)}\right)  
\]
for some $\lambda_k\in \mathbb{K}$. Suppose there exist two non-zero coefficients, say, $\lambda_u,\lambda_v$.
We can choose the smallest index $1\leq r\leq n$ such that $a_r^{(u)}\neq a_r^{(v)}$ in ${\mathbb{Z}}/{l_r\mathbb{Z}}$. Then the vectors $e\left(a_1^{(u)},\cdots,a_n^{(u)}\right)$ and $e\left(a_1^{(v)},\cdots,a_n^{(v)}\right)$ are eigenvectors of $z_r$ associated with the eigenvalues \[\Lambda_u=
    \displaystyle\left(\prod_{\substack {s<r \\ s \notin I}}
    q_s^{a_s^{(u)}}\right)\left(\prod_{\substack {s<r \\ s \in I}}q_s^{-a_s^{(u)}}\right)\gamma_rq_r^{a^{(u)}_r}\ \ \text{and}\ \ \Lambda_v=\displaystyle\left(\prod_{\substack {s<r \\ s \notin I}}
    q_s^{a_s^{(v)}}\right)\left(\prod_{\substack {s<r \\ s \in I}}q_s^{-a_s^{(v)}}\right)\gamma_rq_r^{a^{(v)}_r}\] respectively. We claim that $\Lambda_u\neq \Lambda_v$. Indeed, \[\Lambda_u=\Lambda_v\implies \gamma_rq_r^{a^{(u)}_r}=\gamma_rq_r^{a^{(v)}_r} \implies q_r^{\left(a_r^{(u)}-a_r^{(v)}\right)}=1 \implies l_r\mid \left(a_r^{(u)}-a_r^{(v)}\right),\] which is a contradiction. 
\par Now $pz_r-\Lambda_up$ is a non zero element in $P$ of smaller length than $p$. Hence by induction, it follows that $P$ contains a basis vector of the form $e(a_1,\cdots,a_n)$ and hence $P=M\left(\underline{\mu}(I,J),\underline{\gamma}(I)\right)$ by the actions of $x_i$ and $y_i$ for all $1\leq i \leq n$. This completes the proof.   
\end{proof}
\begin{coro}
Let $M\left(\underline{\mu}(I,J),\underline{\gamma}(I)\right)$ be a simple ${{A}_n^{\underline{q},\Lambda}}$-module. Then
\begin{itemize}
    \item [(1)] The set $I$ consists of indices $i\in [1,n]$ so that $x_i$ act as nilpotent operator on $M\left(\underline{\mu}(I,J),\underline{\gamma}(I)\right)$.
    \item [(2)] The set $J$ consists of indices $j\in [1,n]$ so that $y_j$ act as nilpotent operator on $M\left(\underline{\mu}(I,J),\underline{\gamma}(I)\right)$.
    \item [(3)] The module $M\left(\underline{\mu}(I,J),\underline{\gamma}(I)\right)$ is $z_i$-torsionfree for all $1\leq i\leq n$.
\end{itemize}
\end{coro}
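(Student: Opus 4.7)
The plan is to unpack the explicit right-action formulas defining $M:=M\left(\underline{\mu}(I,J),\underline{\gamma}(I)\right)$ on the distinguished basis $\{e(\underline{a})\}$ and verify each of the three assertions by direct inspection. All three hinge on the observation that the scalars appearing in those formulas are products of nonzero roots of unity and of $\gamma_i \in \mathbb{K}^*$, so the only way to get a zero is via the distinguished vanishing factors built into the definition.

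For (1), I want to show that $x_i$ is nilpotent exactly for $i\in I$. When $i\in I$, either the formula itself defines $e(\underline{a})x_i=0$ at $a_i=0$ (sub-case $i\in I\cap J$), or the scalar factor $(q_i^{-a_i}-1)\gamma_{i-1}/(q_i-1)$ vanishes at $a_i=0$ (sub-case $i\in I\setminus(I\cap J)$). Since each application of $x_i$ drops the $i$-th coordinate by $1 \bmod l_i$, starting from any $e(\underline{a})$ one reaches the state with $a_i=0$ within at most $l_i$ steps and then the next application returns $0$; thus $x_i^{l_i+1}$ annihilates $M$. Conversely, when $i\notin I$ we have $\mu_i\neq 0$, the factor $\mathbb{A}(\underline{a};i)$ is a nonzero product of roots of unity, and $e(\underline{a})x_i$ is a nonzero scalar multiple of $e(\underline{a}\oplus\epsilon_i(+1))$; this makes $x_i$ a cyclic permutation (up to invertible scalars) of the basis, hence a $\mathbb{K}$-linear automorphism of $M$ and certainly not nilpotent.

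Assertion (2) is handled by an entirely symmetric argument with the roles of $x_i,y_i$ and of the coordinates $a_i=0$, $a_i=l_i-1$ swapped. For $i\in I\cap J$ the formula forces $e(\underline{a})y_i=0$ at $a_i=l_i-1$; for $i\in I\setminus (I\cap J)$ the action of $y_i$ is an invertible scaling of a forward cyclic permutation, so $y_i$ is bijective and $i\notin J$; for $i\in J\setminus I$ the scalar $(q_i^{a_i}\gamma_i-\gamma_{i-1})/(q_i-1)$ is required to vanish at some $a_i\in\{0,\dots,l_i-1\}$ (equivalently $\gamma_{i-1}/\gamma_i\in\langle q_i\rangle$), which is the implicit compatibility on $\underline{\gamma}$ ensuring $y_i$ is nilpotent exactly on $J$; and for $i\notin I\cup J$ the scalars are all nonzero and $y_i$ is again a $\mathbb{K}$-linear automorphism.

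For (3), I simply reuse the eigenvalue computation that was already carried out inside the proof of simplicity: $z_r$ acts diagonally on the basis, sending $e(\underline{a})$ to $\Lambda_{\underline{a},r}\, e(\underline{a})$ with
\[
\Lambda_{\underline{a},r} \;=\; \gamma_r \left(\prod_{\substack{s\le r\\ s\notin I}} q_s^{\,a_s}\right)\!\left(\prod_{\substack{s\le r\\ s\in I}} q_s^{-a_s}\right).
\]
Since $\gamma_r\in\mathbb{K}^*$ and each $q_s$ is a nonzero root of unity, every eigenvalue is nonzero; hence $z_r$ is a $\mathbb{K}$-linear automorphism of $M$, and in particular $M$ is $z_r$-torsionfree for each $r$. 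There is no deep obstacle here: the main care is just in tracking the four case distinctions ($i\in I\cap J$, $i\in I\setminus J$, $i\in J\setminus I$, $i\notin I\cup J$) in the piecewise action formulas and noting exactly which factor produces the needed zero (or fails to).
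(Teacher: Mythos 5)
Your verifications of (1) and (3) are correct and are exactly how the corollary is meant to be read off the construction (the paper gives no separate proof): for $i\in I$ the action of $x_i$ is the backward shift whose scalar contains the factor $q_i^{-a_i}-1$, so it kills the slice $a_i=0$ and $x_i^{l_i}=0$, while for $i\notin I$ the forward shift has all scalars $\mu_i\,\mathbb{A}(\underline a;i)\neq 0$, so $x_i$ permutes the basis up to nonzero scalars and is bijective; and $z_r$ acts diagonally with nonzero eigenvalues (your formula agrees with a direct computation; the sign of the exponent of $q_r$ when $r\in I$ is immaterial since only nonvanishing is needed), so $M$ is $z_r$-torsionfree, as one can also see from Lemma \ref{itn} once the action of $z_r$ is nonzero.

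There is, however, a genuine gap in your treatment of (2) for the indices $j\notin I$. The subset $J$ enters the action formulas only through $I\cap J$ (via $\mu_j=0$); for $j\notin I$ the operator $y_j$ is the backward shift with scalar proportional to $q_j^{a_j}\gamma_j-\gamma_{j-1}$, so its nilpotency is equivalent to $\gamma_{j-1}\in\gamma_j\langle q_j\rangle$ and has a priori nothing to do with whether $j$ lies in $J$. The stated definition of $\underline{\gamma}(I)$ (namely $q_i\gamma_i=\gamma_{i-1}$ only for $i\in I$) neither forces this condition when $j\in J\setminus I$ nor rules it out when $j\notin I\cup J$: for instance $\gamma_j=\gamma_{j-1}$ with $j\notin I\cup J$ makes $y_j$ nilpotent, contradicting your claim that "for $i\notin I\cup J$ the scalars are all nonzero." You do flag the needed hypothesis as an "implicit compatibility" in the subcase $j\in J\setminus I$, but then assert the complementary subcase without it, so the argument for (2) is circular as written. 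To make (2) a theorem about the constructed modules one must add the convention that for $j\notin I$ one has $j\in J$ if and only if $\gamma_{j-1}\in\gamma_j\langle q_j\rangle$ — which is exactly what holds for the parameters $\mathbb{I},\mathbb{J},\underline{\mu},\underline{\gamma}$ produced in the classification (there $\beta_j=0$ forces the vanishing of one of these scalars) — or else restrict (2) to the indices in $I$. This is partly an imprecision in the statement itself, but your proof papers over it rather than resolving it.
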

\subsection{Classification of Simple Modules:}
Let us first recall the assumption (\ref{asm}) as well as the commutation relations and identities for the algebra ${{A}_n^{\underline{q},\Lambda}}$ as mentioned at the beginning of this chapter. In the root of unity setting, the algebra ${{A}_n^{\underline{q},\Lambda}}$ is a prime affine PI algebra. Proposition \ref{sim} implies that the $\mathbb{K}$-dimension of each simple ${{A}_n^{\underline{q},\Lambda}}$-module is finite and bounded above by its PI degree $\prod\limits_{i=1}^{n}l_i$. Note that the action of each normal element $z_i$ on a simple ${{A}_n^{\underline{q},\Lambda}}$-module is either zero or invertible. 
\par Let $N$ be a simple ${{A}_n^{\underline{q},\Lambda}}$-module on which each $z_i$ acts invertibly. We can easily verify that the elements 
\begin{equation}\label{cev1}
   x_i^{l_i},\ y_i^{l_i},\  z_i\ \ \ \forall\ \ \ 1\leq i\leq n 
\end{equation}
in ${{A}_n^{\underline{q},\Lambda}}$ commute. Since $N$ has finite $\mathbb{K}$-dimension, there exists a common eigenvector $v$ in $N$ of the operators in (\ref{cev1}).
We define
\[
\begin{array}{ll}
vx_i^{l_i}=\alpha_iv, &\forall \ \ \ 1\leq i\leq n\\
vy_i^{l_i}=\beta_iv,&\forall \ \ \ 1\leq i\leq n\\
vz_i=\zeta_iv, &\forall \ \ \ 1\leq i\leq n
\end{array}
\]
for some $\alpha_i,\beta_i\in \mathbb{K}$ and $\zeta_i\in \mathbb{K}^*$. By Schur's lemma, each of the central elements $x_i^{l_i}$ and $y_i^{l_i}$ act as scalars on $N$. Thus, the operators $x_i$ or $y_i$ on $N$ are either invertible or nilpotent. Now set \[\mathbb{I}:=\{i:\alpha_i= 0\}\  \ \text{and}\ \ \mathbb{J}:=\{i:\beta_i=0\}.\] Clearly the set  $\mathbb{I}$ (respectively $\mathbb{J}$) consists of indices $i$ in $[1,n]$ for which the operators $x_i$ (respectively $y_i$) on $N$ are nilpotent. Depending on this, the following cases arise:\\
\textbf{Case I:} First consider the case $\mathbb{I}=\varphi$. Then $\alpha_i\neq 0$ for all $1\leq i\leq n$ and thus the operators $x_i$ on $N$ are invertible for all $1\leq i\leq n$. Then the vectors 
\[vx_n^{a_n}\cdots x_1^{a_1},\ 0\leq a_i\leq l_i-1\] are nonzero in $N$. Let us set for each $i\in [1,n]$:
\[\gamma_i:=\zeta_i\ \ \text{and}\ \  \mu_i:=l_i\text{-th root of}\ \alpha_i.\]
Thus $\underline{\mu}(\mathbb{I},\mathbb{J}):=(\mu_1,\cdots,\mu_n)\in (\mathbb{K}^{*})^n$ and $\underline{\gamma}(\mathbb{I}):=(\gamma_1,\cdots,\gamma_n)\in (\mathbb{K}^{*})^n$. Now define a $\mathbb{K}$-linear map 
\[\phi:M\left(\underline{\mu}(\mathbb{I},\mathbb{J}),\underline{\gamma}(\mathbb{I})\right) \longrightarrow N\]
by specifying the image of basis vectors:
\[
\phi\left(e(a_1,\cdots,a_n)\right):=\left(\prod\limits_{i=1}^{n}\mu_i^{-a_i}\right) vx_n^{a_n}\cdots x_1^{a_1}.
\]
We can easily check that $\phi$ is a nonzero ${{A}_n^{\underline{q},\Lambda}}(\mathbb{K})$-module homomorphism. Thus by Schur's lemma, $\phi$ is an isomorphism.\\
\noindent\textbf{Case II:} Next consider the case $\mathbb{I}\neq \varphi$. Then for each $i\in \mathbb{I}$, the operator $x_i$ on $N$ is nilpotent, and hence $\kere (x_i):=\{m\in N:mx_i=0\}$ is a nonzero subspace of $N$. We now claim that $\cap_{\substack{i\in \mathbb{I}}}\kere (x_i)\neq \{0\}$. Indeed if $i_1,i_2\in \mathbb{I}$, then the nilpotent operator $x_{i_2}$ on $N$ must keep $\kere(x_{i_1})$ invariant because of the commutation relation between $x_{i_1}$ and $x_{i_2}$. Hence $x_{i_2}$ is also nilpotent on the subspace $\kere(x_{i_1})$. That is $\kere(x_{i_1})\cap \kere(x_{i_2})\neq \{0\}$. Continuing this argument we can obtain $\cap_{i\in \mathbb{I}}\kere (x_i)\neq \{0\}$. \\
\textbf{Step 1:} Now each of the commuting operators in (\ref{cev1}) must keep the $\mathbb{K}$-space $\cap_{i\in \mathbb{I}}\kere (x_i)$ invariant due to commutation relations with $x_i$ for $i\in \mathbb{I}$. Therefore we can choose a common eigenvector $w\in \cap_{i\in \mathbb{I}}\kere (x_i)$ of the operators (\ref{cev1}). We define
\[\begin{array}{l}
wx_i=0,~i\in \mathbb{I}\\
wx^{l_i}_i=\xi_iw,~i\in [1,n]\setminus \mathbb{I}\\
wy^{l_i}_i=\eta_iw,~i\in [1,n]\\
wz_i=\zeta_iw,~i\in [1,n]
\end{array}\]
Clearly, we can observe that

\begin{itemize}
    \item $\xi_i\in \mathbb{K}^*$ for all $i\in [1,n]\setminus \mathbb{I}$ and $\zeta_i\in \mathbb{K}^*$ for all $i\in [1,n]$,
    \item $\eta_i=0$ if $i\in \mathbb{J}$ and $\eta_i\in \mathbb{K}^*$ if $i\in [1,n]\setminus \mathbb{J}$.
\end{itemize}
Now for each $i\in \mathbb{I}$, the relation $x_iy_i-q_iy_ix_i=z_{i-1}$ induces the following
\[
wz_{i-1}=w(x_iy_i-q_iy_ix_i)=-q_iwy_ix_i=q_iwz_i.\] Thus we obtain $q_i\zeta_i=\zeta_{i-1}$ for each $i\in \mathbb{I}$.\\
\textbf{Step 2:} We now claim that for each $i\in \mathbb{I}$, the vectors $wy_i^{a_i},~0\leq a_i\leq l_i-1$ in $N$ are nonzero. From the description of the set $\mathbb{J}$, if $i\in \mathbb{I}\setminus \mathbb{J}$ then $wy^{l_i}_i\neq 0$ and we are done. On the other hand if $\mathbb{I}\cap \mathbb{J}\neq \varphi$, then for each $i\in \mathbb{I}\cap \mathbb{J}$, we have $wy^{l_i}_i=0$. Suppose $k_i$ is the smallest index with $1\leq k_i\leq l_i$ such that $wy_i^{k_i-1}\neq 0\ \text{and}\ wy_i^{k_i}=0$. In fact, after simplifying the equality $wy^{k_i}x_i=0$ we obtain
\begin{align*}
    0=wy^{k_i}x_i&=q_i^{-k_i}w\left(x_iy_i^{k_i}-\displaystyle\frac{q_i^{k_i}-1}{q_i-1}z_{i-1}y_i^{k_i-1}\right)\\
    &=-q_i^{-k_i}\displaystyle\frac{q_i^{k_i}-1}{q_i-1}wz_{i-1}y_i^{k_i-1}\\
    &=-q_i^{-k_i}\displaystyle\frac{q_i^{k_i}-1}{q_i-1}\zeta_{i-1}wy_i^{k_i-1}.
\end{align*}
This implies $k_i$ is the smallest index such that $q_i^{k_i}=1$ and hence $k_i=l_i$. Thus we can conclude that for each $i\in \mathbb{I}$, the vectors $wy_i^{a_i},~0\leq a_i\leq l_i-1$ are nonzero in $N$.\\
\textbf{Step 3:} Let us set for each $i\in [1,n]$: 
\[\mu_i:=\begin{cases}
    l_i\text{-th root of}\ \xi_i,& i\in [1,n]\setminus \mathbb{I}\\
    l_i\text{-th root of}\ \eta_i, & i\in \mathbb{I}\setminus (\mathbb{I}\cap \mathbb{J})\\
    0,& i\in \mathbb{I}\cap \mathbb{J}
\end{cases}~ \ \text{and}\ \  
~\gamma_i:=\zeta_i.\]
Clearly $\underline{\mu}(\mathbb{I},\mathbb{J}):=(\mu_1,\cdots,\mu_n)\in \mathbb{K}^n$ with $\mu_i=0$ iff $i\in \mathbb{I}\cap \mathbb{J}$ and $\underline{\gamma}(\mathbb{I}):=(\gamma_1,\cdots,\gamma_n)\in (\mathbb{K}^{*})^{n}$ with $q_i\gamma_i=\gamma_{i-1}$ if $i\in \mathbb{I}$. Any ${{A}_n^{\underline{q},\Lambda}}$-module homomorphism must map $z_i$-eigenvectors of $M\left(\underline{\mu}(\mathbb{I},\mathbb{J}),\underline{\gamma}(\mathbb{I})\right)$ to $z_i$-eigenvectors of $N$
with the same eigenvalue. Define a $\mathbb{K}$-linear map
\[\psi:M\left(\underline{\mu}(\mathbb{I},\mathbb{J}),\underline{\gamma}(\mathbb{I})\right) \longrightarrow N\]
by setting
\begin{equation*}
\psi\left(e(a_1,\cdots,a_n)\right):=\displaystyle wf(a_n)\cdots f(a_1),
\end{equation*} 
where \[f(a_i)=\begin{cases}
\mu_i^{-a_i} x_i^{a_i}, & i\in [1,n]\setminus \mathbb{I}\\
 \mu_i^{-a_i} y_i^{a_i}, & i\in \mathbb{I}\setminus (\mathbb{I}\cap \mathbb{J})\\
 y_i^{a_i},& i\in \mathbb{I}\cap \mathbb{J}.
\end{cases}\]
It can be easily verified that $\psi$ is a nonzero ${{A}_n^{\underline{q},\Lambda}}$-module homomorphism. By Schur's lemma, $\psi$ is an isomorphism. 
\par Thus the above discussions lead us to the main result of this section which provides a classification of simple ${{A}_n^{\underline{q},\Lambda}}$-modules in terms of scalar parameters:
\begin{theo}\label{ztorsim}
 Each $z_i$-torsionfree for all $1\leq i\leq n$ simple ${{A}_n^{\underline{q},\Lambda}}$-module is isomorphic to a simple ${{A}_n^{\underline{q},\Lambda}}$-module $M\left(\underline{\mu}(I,J),\underline{\gamma}(I)\right)$ for two subsets $I$ and $J$ of $[1,n]$ with $\underline{\mu}(I,J):=(\mu_1,\cdots,\mu_n)\in \mathbb{K}^n$ such that $\mu_i=0$ iff $i\in I\cap J$ and $\underline{\gamma}(I):=(\gamma_1,\cdots,\gamma_n)\in \mathbb{({K}^*)}^{n}$ such that $q_i\gamma_i=\gamma_{i-1}$ if $i\in I$.  
\end{theo}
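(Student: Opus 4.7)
The plan is to start with a simple module $N$ on which every $z_i$ acts invertibly (equivalently, by Lemma \ref{itn}, is $z_i$-torsionfree for all $i$), extract scalar parameters $\underline{\mu}$, $\underline{\gamma}$ from $N$, and then exhibit an explicit nonzero $\mathbb{K}$-linear map from $M(\underline{\mu}(I,J),\underline{\gamma}(I))$ to $N$ which must be an isomorphism by Schur's lemma and simplicity.

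First I would observe that the elements $x_i^{l_i}$, $y_i^{l_i}$, and $z_i$ all commute pairwise in ${{A}_n^{\underline{q},\Lambda}}$, since under assumption (\ref{asm}) all the twisting scalars become $1$ after raising to $l_i$-th powers (using the identities in Proposition \ref{l1}(1)). Since $N$ is finite-dimensional over $\mathbb{K}$, there exists a common eigenvector. The eigenvalues $\alpha_i$ of $x_i^{l_i}$ and $\beta_i$ of $y_i^{l_i}$ dictate whether $x_i$ (respectively $y_i$) acts invertibly or nilpotently, which leads to the definition of $\mathbb{I} = \{i : \alpha_i = 0\}$ and $\mathbb{J} = \{i : \beta_i = 0\}$, exactly as in the setup before the theorem.

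Next I split into two cases. When $\mathbb{I} = \varnothing$, all the $x_i$ act invertibly, so starting from the common eigenvector $v$ we can produce the $\prod l_i$ vectors $vx_n^{a_n}\cdots x_1^{a_1}$ with $0 \leq a_i \leq l_i - 1$, and these are all nonzero. I would then define $\phi(e(a_1,\dots,a_n)) = \bigl(\prod \mu_i^{-a_i}\bigr) vx_n^{a_n}\cdots x_1^{a_1}$, where $\mu_i$ is an $l_i$-th root of $\alpha_i$ and $\gamma_i = \zeta_i$, and verify that $\phi$ intertwines the explicit action formulas with the action on $N$ by a direct computation using the defining relations. When $\mathbb{I} \neq \varnothing$, I would first use the fact that $x_{i_1}$ and $x_{i_2}$ satisfy $x_{i_1}x_{i_2} = (\text{scalar}) x_{i_2} x_{i_1}$, so the kernels $\ker(x_i)$ for $i \in \mathbb{I}$ are preserved by all other $x_j$ with $j \in \mathbb{I}$; together with nilpotency this gives $\bigcap_{i\in \mathbb{I}} \ker(x_i) \neq 0$. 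Then take a common eigenvector $w$ of the operators in (\ref{cev1}) in this intersection, and define the map $\psi$ on basis vectors using $y_i^{a_i}$ in the $i \in \mathbb{I}$ slots and $x_i^{a_i}$ in the remaining slots, with the appropriate normalization by $\mu_i^{-a_i}$.

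The main obstacle is the case $\mathbb{I} \cap \mathbb{J} \neq \varnothing$, where both $x_i$ and $y_i$ are nilpotent on $N$. Here I must verify that the vectors $wy_i^{a_i}$ for $0 \leq a_i \leq l_i - 1$ are all nonzero. The trick is to let $k_i$ be the smallest index with $wy_i^{k_i} = 0$ and then multiply on the right by $x_i$; using Proposition \ref{l1}(2)(b) together with $wx_i = 0$ and $wz_{i-1} = \zeta_{i-1}w \neq 0$, the equation $0 = wy_i^{k_i}x_i$ reduces to a nonzero scalar multiple of $wy_i^{k_i - 1}$, which forces $q_i^{k_i} = 1$, hence $k_i = l_i$. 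This also forces the compatibility $q_i \zeta_i = \zeta_{i-1}$ for $i \in \mathbb{I}$, which is exactly the condition $q_i \gamma_i = \gamma_{i-1}$ required in the definition of $\underline{\gamma}(I)$.

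Once we have the nonzero map $\psi$ (or $\phi$) in each case, simplicity of both source and target together with Schur's lemma gives that the map is an isomorphism, proving the theorem.
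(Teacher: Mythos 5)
Your proposal is correct and follows essentially the same route as the paper's own argument: the same common-eigenvector setup for $x_i^{l_i},y_i^{l_i},z_i$, the same case split on $\mathbb{I}=\varnothing$ versus $\mathbb{I}\neq\varnothing$, the same kernel-intersection and $wy_i^{k_i}x_i=0$ computations, and the same conclusion via Schur's lemma. The only cosmetic difference is that the paper derives $q_i\zeta_i=\zeta_{i-1}$ directly from $wz_{i-1}=w(x_iy_i-q_iy_ix_i)=-q_iwy_ix_i=q_iwz_i$ before the nilpotency argument, rather than as a byproduct of it.
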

In view of this section together with Theorem \ref{onepart}, we have successfully achieved a comprehensive classification of maximal dimensional simple modules for ${{A}_n^{\underline{q},\Lambda}}$. Specifically, we have proved the following:
\begin{theo}\label{otm}
A simple ${{A}_n^{\underline{q},\Lambda}}$-module $M$ is $z_i$-torsionfree for all $1\leq i\leq n$ if and only if it is maximal dimensional, i.e., $\dime_{\mathbb{K}}(M)=\prod\limits_{i=1}^nl_i$.  
\end{theo}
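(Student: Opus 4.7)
The plan is to prove the equivalence by assembling the results that have already been laid out in this section; no fundamentally new argument is required, since Theorem \ref{otm} is essentially the synthesis of Theorem \ref{onepart} with the classification Theorem \ref{ztorsim}.

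For the ``only if'' direction, the claim is precisely Theorem \ref{onepart}. The argument proceeds by contrapositive: if $M$ is a $z_r$-torsion simple module for some $1\le r\le n$, then since $z_r$ is normal, $M$ descends to a simple module over the prime affine PI factor algebra ${{A}_n^{\underline{q},\Lambda}}/\langle z_r \rangle$; then Proposition \ref{sim} together with the strict inequality $\pideg({{A}_n^{\underline{q},\Lambda}}/\langle z_r \rangle)<\pideg({{A}_n^{\underline{q},\Lambda}})$ furnished by Theorems \ref{pitheorem} and \ref{piin} forces $\dime_{\mathbb{K}}(M)<\prod_{i=1}^n l_i$. So nothing additional is needed on this side.

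For the ``if'' direction, I would invoke the classification Theorem \ref{ztorsim}: any simple ${{A}_n^{\underline{q},\Lambda}}$-module that is $z_i$-torsionfree for every $1\le i\le n$ is isomorphic to one of the explicitly constructed modules $M\left(\underline{\mu}(I,J),\underline{\gamma}(I)\right)$. By construction, these modules have the $\mathbb{K}$-basis $\{e(\underline{a}) : 0\le a_i\le l_i-1,\, 1\le i\le n\}$, and so have $\mathbb{K}$-dimension exactly $\prod_{i=1}^n l_i$. Combining this with Theorem \ref{pitheorem}, which gives $\pideg {{A}_n^{\underline{q},\Lambda}}=\prod_{i=1}^n l_i$, we conclude that $\dime_{\mathbb{K}}(M)=\pideg {{A}_n^{\underline{q},\Lambda}}$, i.e.\ $M$ is maximal dimensional.

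Consequently, there is no genuine obstacle in the proof of Theorem \ref{otm} itself; it is a short two-paragraph assembly of earlier work. The substantive effort was already expended upstream: the PI degree computation for the prime factors in Theorem \ref{piin} (which powers the forward direction), and the construction together with the Schur-lemma and common-eigenvector arguments establishing Theorem \ref{ztorsim} (which powers the converse). Once those are in hand, Theorem \ref{otm} essentially writes itself.
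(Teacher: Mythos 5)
Your proposal is correct and follows exactly the paper's route: the direction ``maximal dimensional $\Rightarrow$ $z_i$-torsionfree'' is Theorem \ref{onepart}, and the converse follows from Theorem \ref{ztorsim} together with the fact that each constructed module $M\left(\underline{\mu}(I,J),\underline{\gamma}(I)\right)$ has basis $\{e(\underline{a}):0\leq a_i\leq l_i-1\}$ and hence dimension $\prod_{i=1}^n l_i=\pideg{{A}_n^{\underline{q},\Lambda}}$. The only slip is cosmetic: you have interchanged the labels ``if'' and ``only if'' relative to the statement, but the two implications you prove are the correct ones.
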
 
In the next sections, we aim to determine the center and the Azumaya locus for the quantum Weyl algebra under the roots of unity assumption (\ref{asm}).
\section{{The Center for Quantum Weyl Algebras}}\label{cenqwa}
In this section, we describe the centers of the PI quantized Weyl algebras under the assumption (\ref{asm}). Let $Z(A)$ denote the center of the algebra $A$.
\begin{prop}\label{center}
The center $Z({{A}_n^{\underline{q},\Lambda}})$ is the subalgebra generated by  $x_1^{l_1},y_1^{l_1},\cdots,x_n^{l_n},y_n^{l_n}$ and is
isomorphic to a polynomial algebra \[Z({{A}_n^{\underline{q},\Lambda}})=\mathbb{K}[x_1^{l_1},y_1^{l_1},\cdots,x_n^{l_n},y_n^{l_n}].\]
\end{prop}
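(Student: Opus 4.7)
The plan is to first show that each $x_i^{l_i}$ and $y_i^{l_i}$ is central, then observe that these elements generate a polynomial subalgebra $Z_0$ of ${{A}_n^{\underline{q},\Lambda}}$, and finally conclude $Z_0$ exhausts the whole center via Posner's theorem combined with a normality argument. For centrality, I would check commutation of each $x_i^{l_i}$ (and $y_i^{l_i}$) with every generator $x_j,y_j$. For $j\neq i$ the commutation multiplies by a scalar of the form $q_a^{c}\lambda_{ab}^{d}$; raising to the $l_i$-th power, the divisibility condition $l_1\mid\cdots\mid l_n$ together with the hypothesis that $\lambda_{ab}$ is an $l_{\min(a,b)}$-th root of unity forces this scalar to become $1$. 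For the same-index relation $x_iy_i-q_iy_ix_i=z_{i-1}$, I would invoke Proposition \ref{l1}(2) with $k=l_i$: since $q_i^{l_i}=1$ and $1+q_i+\cdots+q_i^{l_i-1}=0$, the correction term vanishes, giving $x_i^{l_i}y_i=y_ix_i^{l_i}$; the companion identity handles $x_iy_i^{l_i}=y_i^{l_i}x_i$.

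Setting $Z_0:=\mathbb{K}\langle x_1^{l_1},y_1^{l_1},\ldots,x_n^{l_n},y_n^{l_n}\rangle$, commutativity follows from the first step, while the ordered monomials $(y_1^{l_1})^{a_1}(x_1^{l_1})^{b_1}\cdots(y_n^{l_n})^{a_n}(x_n^{l_n})^{b_n}$ form a subset of the PBW-type basis (\ref{kbasis}), hence are $\mathbb{K}$-linearly independent, so $Z_0\cong\mathbb{K}[X_1,Y_1,\ldots,X_n,Y_n]$. Using the same basis and writing each exponent as $a_i=l_iu_i+r_i$ and $b_i=l_iv_i+s_i$ with $0\le r_i,s_i<l_i$, the centrality of $x_i^{l_i},y_i^{l_i}$ lets one collect the $l_i$-th power factors to the front and exhibits ${{A}_n^{\underline{q},\Lambda}}$ as a free $Z_0$-module of rank $\prod_{i=1}^n l_i^2$ with basis $\{y_1^{r_1}x_1^{s_1}\cdots y_n^{r_n}x_n^{s_n}:0\le r_i,s_i<l_i\}$.

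To close the center, set $F_0=\cf(Z_0)$ and $F=\cf(Z({{A}_n^{\underline{q},\Lambda}}))$, so $F_0\subseteq F$. The central Ore localization $Q':=(Z_0\setminus\{0\})^{-1}{{A}_n^{\underline{q},\Lambda}}$ is a prime $F_0$-algebra of $F_0$-dimension $\prod l_i^2$, hence simple Artinian; since the regular elements of ${{A}_n^{\underline{q},\Lambda}}$ remain invertible inside it, it coincides with the Goldie quotient $Q:=\cf({{A}_n^{\underline{q},\Lambda}})$. Posner's theorem (Theorem \ref{pos}) together with Theorem \ref{pitheorem} then gives $\dim_F Q=(\pideg {{A}_n^{\underline{q},\Lambda}})^2=(\prod l_i)^2$, and the tower law $\dim_{F_0}Q=[F:F_0]\cdot\dim_F Q$ forces $[F:F_0]=1$, i.e.\ $F=F_0$. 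Finally, any $z\in Z({{A}_n^{\underline{q},\Lambda}})$ lies in $F_0=\cf(Z_0)$ and is integral over $Z_0$ (because ${{A}_n^{\underline{q},\Lambda}}$ is a finite $Z_0$-module), so by integral closedness of the polynomial ring $Z_0$ in its fraction field we conclude $z\in Z_0$, yielding $Z({{A}_n^{\underline{q},\Lambda}})=Z_0$. The main technical subtlety is the identification of the central localization $Q'$ with the full Goldie quotient $Q$; this follows from $Q'$ being simple Artinian and containing ${{A}_n^{\underline{q},\Lambda}}$ with all its regular elements invertible, so the remainder of the argument is bookkeeping with the PBW basis and the divisibility hypothesis.
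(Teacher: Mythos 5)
Your proposal is correct, but it follows a genuinely different route from the paper. The paper proceeds by induction on $n$: after checking that the candidate subalgebra is central, it expands an arbitrary central element $w=\sum w(a,b)x_n^ay_n^b$ over the PBW basis, shows each coefficient $w(a,b)$ commutes with $x_n,y_n$ and with the $z_i$, kills all terms whose exponents are not divisible by $l_n$ by manipulating the identity $x_nw=wx_n$ (and $y_nw=wy_n$), and then pushes the surviving coefficients into the center of ${{A}_{n-1}^{\underline{q},\Lambda}}$ to invoke the inductive hypothesis (the $n=1$ case being quoted from the literature). You instead establish that ${{A}_n^{\underline{q},\Lambda}}$ is a free module of rank $\prod_i l_i^2$ over the central polynomial subalgebra $Z_0$, pass to the central localization to identify it with the Goldie quotient, and combine Posner's theorem with the already-computed PI degree (Theorem \ref{pitheorem}) and the tower law to get $\cf(Z({{A}_n^{\underline{q},\Lambda}}))=\cf(Z_0)$; integrality over $Z_0$ plus normality of the polynomial ring then closes the center. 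Your argument is sound: the centrality checks go through exactly as you say (divisibility $l_1\mid\cdots\mid l_n$ plus $\lambda_{ij}$ being an $l_{\min(i,j)}$-th root of unity, and Proposition \ref{l1}(2) with the vanishing $q_i$-integer for the same-index relations), there is no circularity since Theorem \ref{pitheorem} is proved in Section 3 independently of the center, and the identification of the central localization with $\cf({{A}_n^{\underline{q},\Lambda}})$ is justified because the localization is a finite-dimensional domain over $\cf(Z_0)$, hence a division ring in which all nonzero elements of the algebra are invertible (alternatively, Corollary \ref{pisame} could be used to the same effect). What each approach buys: the paper's induction is elementary and self-contained (no PI-degree input needed), while your argument is shorter at the price of relying on Theorem \ref{pitheorem}, is an instance of a general principle (a prime affine PI algebra finite over a normal central subalgebra of rank $\pideg^2$ has that subalgebra as its full center), and yields as a byproduct that ${{A}_n^{\underline{q},\Lambda}}$ is free of rank $(\pideg)^2$ over its center, which is useful for the Azumaya-locus discussion later in the paper.
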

\begin{proof} 
We proceed by induction $n$. The base case $n=1$ is clear from \cite[Lemma 2.2]{am}. Fix $n\geq 1$ and let $S$ denote the subalgebra of ${{A}_n^{\underline{q},\Lambda}}$ generated by the elements $x_1^{l_1},y_1^{l_1},\cdots,x_n^{l_n},y_n^{l_n}$. By Proposition \ref{l1} and the defining relations of ${{A}_n^{\underline{q},\Lambda}}$ along with the assumptions (\ref{asm}), the algebra $S$ is contained in the center of ${{A}_n^{\underline{q},\Lambda}}$ and is isomorphic to the polynomial algebra in variables $x_1^{l_1},y_1^{l_1},\cdots,x_n^{l_n},y_n^{l_n}$. Thus it suffices to show that $Z({{A}_n^{\underline{q},\Lambda}})$ is contained in $S$. Let $w\in Z({{A}_n^{\underline{q},\Lambda}})$. By the $\mathbb{K}$-basis (\ref{kbasis}), we can write
\[w=\sum\limits_{a,b}w(a,b)x_n^ay_n^b,\]
where the summation ranges over non-negative integers $a,b$ and each $w(a,b)$ is an element in the subalgebra ${{A}_{n-1}^{\underline{q},\Lambda}}$ generated by the $x_i,y_i$ for all $1\leq i\leq n-1$. The inclusion $Z({{A}_n^{\underline{q},\Lambda}})\subset S$ follows from the following claim, whose proof is divided into three steps:\\
\textbf{Claim:} The element $w(a,b)$ belongs to $S$ if $l_n$ divides both $a$ and $b$. Otherwise $w(a,b)=0$. \\
\textbf{Step 1:} We argue that $x_n$ and $y_n$ commutes with each $w(a,b)$. First note that $z_{i}$ commutes with each $w(a,b)$ for all $1\leq i\leq n-1$. This fact is a consequence of the observations that $w$ is central and $z_{i}$ commutes with $x_n$ and $y_n$ for all $1\leq i\leq n-1$. Next fix $a,b$ and abbreviate $w(a,b)$ by $w'$. Write
\[w'=\sum\limits_{{\underline{a},\underline{b}}}c_{\underline{a},\underline{b}}x^{\underline{a}}y^{\underline{b}}\]
where the sum ranges over pairs of $(n-1)$-tuples $\underline{a}=(a_1,\cdots,a_{n-1})$ and $\underline{b}=(b_1,\cdots,b_{n-1})$, and $x^{\underline{a}}y^{\underline{b}}$ abbreviates the element $x_1^{a_1}\cdots x_{n-1}^{a_{n-1}}y_1^{a_1}\cdots y_{n-1}^{a_{n-1}}$. Since $z_{i}$ commutes with $w'$ for all $1\leq i\leq n-1$, we use Proposition \ref{l1} to obtain
\[\sum\limits_{{\underline{a},\underline{b}}}c_{\underline{a},\underline{b}}x^{\underline{a}}y^{\underline{b}}z_{i}=w'z_{i}=z_{i}w'=\sum\limits_{{\underline{a},\underline{b}}}\prod\limits_{j=1}^{i}q_j^{(b_j-a_j)}c_{\underline{a},\underline{b}}x^{\underline{a}}y^{\underline{b}}z_{i}.\]
It follows that if $c_{\underline{a},\underline{b}}\neq 0$ then $\prod\limits_{j=1}^{i}q_j^{(b_j-a_j)}=1$ for all $1\leq i\leq n-1$. This implies $l_i$  divides $(b_i-a_i)$ for all $1\leq i\leq n-1$. Consequently, using the defining relations of ${{A}_n^{\underline{q},\Lambda}}$, we conclude that
\[x_nw'=\sum\limits_{{\underline{a},\underline{b}}}c_{\underline{a},\underline{b}}x_nx^{\underline{a}}y^{\underline{b}}=\sum\limits_{{\underline{a},\underline{b}}}c_{\underline{a},\underline{b}}\prod\limits_{i=1}^{n-1}(q_i\lambda_{in})^{(b_i-a_i)}x^{\underline{a}}y^{\underline{b}}x_n=w'x_n.\] An analogous argument shows that $y_nw'=w'y_n$.\\
\textbf{Step 2:} We argue that $w(a,b)=0$ if $l_n$ does not divide both $a$ and $b$. We have that
\[x_nw=\sum\limits_{a,b}w(a,b)x_n^{a+1}y_n^b=\sum\limits_{a\geq 1,b\geq 0}w(a-1,b)x_n^ay_n^b\] 
and 
\[\begin{array}{ll}
wx_n&=\left(\sum\limits_{a,b}w(a,b)x_n^ay_n^b\right)x_n\\
&=\sum\limits_{a,b}q_n^{-b}w(a,b)x_n^{a+1}y_n^b-\frac{1-q_n^{-b}}{1-q_n}z_{n-1}w(a,b)x_n^{a}y_n^{b-1}\\
&=\sum\limits_{a\geq 1,b\geq 0}q_n^{-b}w(a-1,b)x_n^{a}y_n^b-\sum\limits_{a,b}\frac{1-q_n^{-(b+1)}}{1-q_n}z_{n-1}w(a,b+1)x_n^{a}y_n^{b}.
\end{array}\]
Since $x_nw=wx_n$, we deduce that 
\[w(a-1,b)=q_n^{-b}w(a-1,b)+\frac{1-q_n^{-(b+1)}}{1-q_n}z_{n-1}w(a,b+1)\]
for $a\geq 1$ and $b\geq 0$. Rearranging this equation, it is straightforward to verify that this identity gives
\[(1-q_n^{-b})w(a,b)=\frac{1-q_n^{-(b+i)}}{(1-q_n)^i}z_{n-1}^iw(a+i,b+i)\]
for $a,b,i\geq 0$. Suppose that $l_n$ does not divide $b$, so that $q_n^{b}-1\neq 0$. There is a unique $i\in \{1,\cdots,l_n-1\}$ such that $l_n$ divides $b+i$. For this choice of $i$, we have 
\[w(a,b)=\frac{(1-q_n^{-(b+i)})z_{n-1}^iw(a+i,b+i)}{(1-q_n^{-b})(1-q_n)^i}=0.\]
Analogous argument using the identity $y_nw=wy_n$ shows that $w(a,b)=0$ if $l_n$ does not divide $a$.\\
\textbf{Step 3:} Finally, we show that if $l_n$ divides both $a$ and $b$, then $w(a,b)$ belongs to $S$. Since $w(a,b)$ lies in the subalgebra ${{A}_{n-1}^{\underline{q},\Lambda}}$, by induction it suffices to show that it lies in the center. This follows from the fact $x_i$ and $y_i$ commute with $w$ for $1\leq i\leq n-1$ and that $l_n$ divides both $a$ and $b$ whenever $w(a,b)$ is non-zero. This completes the proof.
\end{proof}
\begin{prop}\label{zre}
The following identities hold in ${{A}_{n}^{\underline{q},\Lambda}}$, for $i=1,\cdots,n$ and $z_0=1$:
    \[z_{i}^{l_i}=z_{i-1}^{{l_i}}+q_i^{l_i(l_i-1)/2}(q_i-1)^{l_i}y_i^{l_i}x_i^{l_i}\]
\end{prop}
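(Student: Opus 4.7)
The plan is to compute $y_i^{l_i} x_i^{l_i}$ in closed form and then rearrange. First, I would rewrite $z_i = z_{i-1} + (q_i-1) y_i x_i$ as $y_i x_i = (z_i - z_{i-1})/(q_i-1)$ and aim for the recurrence
\[y_i^k x_i^k = \frac{q_i^{-(k-1)} z_i - z_{i-1}}{q_i - 1} \, y_i^{k-1} x_i^{k-1}.\]
The derivation starts from $y_i^k x_i^k = y_i^{k-1}(y_i x_i) x_i^{k-1}$, substitutes the expression above, and invokes Proposition \ref{l1}(1): $z_{i-1}$ commutes with both $y_i$ and $x_i$ (parts (a) and (b) applied to indices $(i-1,i)$), while $z_i y_i = q_i y_i z_i$ (part (b) with $j=i$) iterates to $y_i^{k-1} z_i = q_i^{-(k-1)} z_i y_i^{k-1}$. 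Substituting and collecting gives the stated recurrence.

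Next, I would iterate the recurrence $l_i$ times from $k=1$ up to $k=l_i$, obtaining
\[y_i^{l_i} x_i^{l_i} = \frac{1}{(q_i-1)^{l_i}} \prod_{j=0}^{l_i-1}\bigl(q_i^{-j} z_i - z_{i-1}\bigr).\]
The factors all commute because they lie in the commutative subalgebra generated by $z_{i-1}$ and $z_i$ (by Proposition \ref{l1}(1)(c)), so the product is unambiguous. To evaluate the product, I would factor $q_i^{-j}$ out of the $j$-th factor and apply the polynomial identity $T^{l_i} - a^{l_i} = \prod_{j=0}^{l_i-1}(T - q_i^j a)$ in $\mathbb{K}[z_{i-1}, z_i]$, which holds since $q_i$ is a primitive $l_i$-th root of unity:
\[\prod_{j=0}^{l_i-1}\bigl(q_i^{-j} z_i - z_{i-1}\bigr) = q_i^{-l_i(l_i-1)/2}\prod_{j=0}^{l_i-1}\bigl(z_i - q_i^{j} z_{i-1}\bigr) = q_i^{-l_i(l_i-1)/2}\bigl(z_i^{l_i} - z_{i-1}^{l_i}\bigr).\]

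Combining yields $(q_i-1)^{l_i} y_i^{l_i} x_i^{l_i} = q_i^{-l_i(l_i-1)/2}(z_i^{l_i} - z_{i-1}^{l_i})$; multiplying both sides by $q_i^{l_i(l_i-1)/2}$ and solving for $z_i^{l_i}$ gives the claimed identity. The main obstacle is the first step: correctly handling the commutation to establish the recurrence, in particular pushing $z_i$ past $y_i^{k-1}$ via the iterated relation $y_i^{k-1} z_i = q_i^{-(k-1)} z_i y_i^{k-1}$ and simultaneously treating $z_{i-1}$ as central relative to $y_i, x_i$. The remaining manipulations occur inside the commutative subalgebra $\mathbb{K}[z_{i-1}, z_i]$ and are purely formal.
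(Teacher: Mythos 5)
Your argument is correct, and it takes a genuinely different route from the paper. The paper's proof expands $z_i^{l_i}=\left(z_{i-1}+(q_i-1)y_ix_i\right)^{l_i}$, shows by induction (using the identity $(y_i^rx_i^r)(y_ix_i)=q_i^ry_i^{r+1}x_i^{r+1}+\frac{q_i^r-1}{q_i-1}z_{i-1}y_i^rx_i^r$) that this equals $q_i^{l_i(l_i-1)/2}(q_i-1)^{l_i}y_i^{l_i}x_i^{l_i}+\sum_{r=1}^{l_i-1}c_r^{(l_i)}y_i^rx_i^r+z_{i-1}^{l_i}$, and then kills the cross terms by a centrality argument: the sum must be central, and by the explicit description of $Z({{A}_{n}^{\underline{q},\Lambda}})$ in Proposition \ref{center} this forces $c_r^{(l_i)}=0$. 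You instead solve for $y_ix_i=\frac{z_i-z_{i-1}}{q_i-1}$, derive the telescoping recurrence $y_i^kx_i^k=\frac{q_i^{-(k-1)}z_i-z_{i-1}}{q_i-1}\,y_i^{k-1}x_i^{k-1}$ from the commutation rules of Proposition \ref{l1}(1), and evaluate the resulting product inside the commutative subalgebra $\mathbb{K}[z_{i-1},z_i]$ via the cyclotomic factorization $\prod_{j=0}^{l_i-1}(T-q_i^ja)=T^{l_i}-a^{l_i}$, valid because $q_i$ is a primitive $l_i$-th root of unity. What your route buys is independence from Proposition \ref{center}: the identity comes out of a closed-form computation with no appeal to the structure of the center, and it even yields the more precise intermediate formula $y_i^{k}x_i^{k}=\frac{1}{(q_i-1)^{k}}\prod_{j=0}^{k-1}\left(q_i^{-j}z_i-z_{i-1}\right)$ for every $k$; the paper's route is shorter once Proposition \ref{center} is in hand, which it is at that point of the text. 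One small citation nit: the commutativity of $z_{i-1}$ with $x_i,y_i$ is just Proposition \ref{l1}(1)(a) (with the pair $(i-1,i)$); part (b) is only needed for $z_iy_i=q_iy_iz_i$, which you use correctly.
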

\begin{proof}
We claim that the following formula holds in ${{A}_{n}^{\underline{q},\Lambda}}$, for any $k\geq 1$:
 \begin{equation}\label{identity}
 \left((q_i-1)y_ix_i+z_{i-1}\right)^k=q_i^{k(k-1)/2}(q_i-1)^ky_i^kx_i^k+\sum\limits_{r=1}^{k-1}c_r^{(k)}y_i^rx_i^r+z_{i-1}^{k}
\end{equation}
where $c_r^{(k)}$ are certain element of ${{A}_{n-1}^{\underline{q},\Lambda}}$. This claim can be easily verified by induction, using the following fact:
 \begin{enumerate}
     \item The element $z_{i-1}$ commutes with each of $x_i$ and $y_i$.
     \item For any $r\geq 1$, we have $(y_i^rx_i^r)(y_ix_i)=q_i^ry_i^{r+1}x_i^{r+1}+\displaystyle\frac{q_i^r-1}{q_i-1}z_{i-1}y_i^rx_i^r$.
 \end{enumerate}
In particular the above identity (\ref{identity}) for $k=l_i$ becomes 
\[z_i^{l_i}=q_i^{l_i(l_i-1)/2}(q_i-1)^{l_i}y_i^{l_i}x_i^{l_i}+\sum\limits_{r=1}^{{l_i}-1}c_r^{({l_i})}y_i^rx_i^r+z_{i-1}^{l_i}.\]
This implies \[\sum\limits_{r=1}^{{l_i}-1}c_r^{({l_i})}y_i^rx_i^r=z_i^{l_i}-q_i^{l_i(l_i-1)/2}(q_i-1)^{l_i}y_i^{l_i}x_i^{l_i}-z_{i-1}^{l_i}.\]
The right-hand side is central, so it follows that the left-hand side is also central. By Proposition \ref{center}, we conclude that $c_r^{(l_i)}=0$ for $1\leq r\leq l_i-1$. Thus we obtain
\[z_i^{l_i}=z_{i-1}^{l_i}+q_i^{l_i(l_i-1)/2}(q_i-1)^{l_i}y_i^{l_i}x_i^{l_i},\ \ 1\leq i\leq n.\]
\end{proof}
Let $S$ be the multiplicative set generated by the elements $z_i$ for $1 \leq i \leq n$. Since each $z_i$ is a normal element, $S$ forms an Ore set. We denote the localization of ${{A}_n^{\underline{q},\Lambda}}$ with respect to the Ore set $S$ as ${{B}_n^{\underline{q},\Lambda}}:={{A}_n^{\underline{q},\Lambda}}[S^{-1}]$. Then the center of ${{B}_n^{\underline{q},\Lambda}}$ is obtained by localizing $Z({{A}_n^{\underline{q},\Lambda}})$ with respect to the multiplicative set $S_c$ generated by the elements $z_i^{l_i}$ for $1 \leq i \leq n$. Therefore \[Z({{B}_n^{\underline{q},\Lambda}})=\mathbb{K}[x_1^{l_1},y_1^{l_1},\cdots,x_n^{l_n},y_n^{l_n}][S_c^{-1}].\]
Now it follows from Corollary \ref{pisame} that the algebra ${{B}_n^{\underline{q},\Lambda}}$ is a PI algebra with $\pideg ({{B}_n^{\underline{q},\Lambda}})=\pideg({{A}_n^{\underline{q},\Lambda}})$.
\section{The Azumaya Locus for Quantum Weyl Algebras}\label{azuqwa}
Assume the assumption (\ref{asm}) on the defining multiparameters of ${{A}_{n}^{\underline{q},\Lambda}}$. Then the algebra ${{A}_{n}^{\underline{q},\Lambda}}$ is prime affine PI algebra satisfying the hypothesis (H) with $\pideg{{A}_n^{\underline{q},\Lambda}}=\prod\limits_{i=1}^nl_i$. This allows us to define the Azumaya locus of ${{A}_{n}^{\underline{q},\Lambda}}$, which parameterizes the simple ${{A}_{n}^{\underline{q},\Lambda}}$-modules of maximal dimension. As $\mathbb{K}$ is algebraically closed, it is clear from Proposition \ref{center}, the maximal ideals of $Z({{A}_{n}^{\underline{q},\Lambda}})$ are of the form \[\mathfrak{m}:=\langle x_i^{l_i}-\alpha_i,y_i^{l_i}-\beta_i: 1\leq i\leq n\rangle\] for some scalars $\alpha_i,\beta_i\in\mathbb{K}$. Suppose \[\chi_{\mathfrak{m}}:Z({{A}_{n}^{\underline{q},\Lambda}})\rightarrow \mathbb{K}\] is the central character corresponding to $\mathfrak{m}\in Z({{A}_{n}^{\underline{q},\Lambda}})$. Then $\chi_{\mathfrak{m}}(x_i^{l_i})=\alpha_i$ and $\chi_{\mathfrak{m}}(y_i^{l_i})=\beta_i$. As each $z_i^{l_i}$ is central, therefore using Proposition \ref{zre}, we can write
\[\chi_{\mathfrak{m}}(z_i^{l_i})=\chi_{\mathfrak{m}}(z_{i-1}^{l_i})+q_i^{l_i(l_i-1)/2}(q_i-1)^{l_i}\beta_i\alpha_i\ \ \text{for}\ \ 1\leq i\leq n.\] By Theorem \ref{otm}, a simple ${{A}_{n}^{\underline{q},\Lambda}}$-module $N$ is maximal dimensional if and only if the action of each $z_i$ on $N$ is invertible. Therefore a simple ${{A}_{n}^{\underline{q},\Lambda}}$-module is maximal dimensional if and only if it is a simple module over ${{B}_{n}^{\underline{q},\Lambda}}$. Consequently, the Azumaya locus of ${{A}_{n}^{\underline{q},\Lambda}}$ is given by 
\begin{align*}
\mathcal{AL}\left({{A}_{n}^{\underline{q},\Lambda}}\right)&=\{\mathfrak{m}\in \mspect Z({{A}_{n}^{\underline{q},\Lambda}}):\chi_{\mathfrak{m}}(z_i^{l_i})\neq 0,~\forall~1\leq i\leq n\}\\
&=\mspect Z({{B}_{n}^{\underline{q},\Lambda}}).
\end{align*}
Thus we have established the following
\begin{theo}\label{qwaal}
The Azumaya locus of ${{A}_{n}^{\underline{q},\Lambda}}$ is $\mspect Z({{B}_{n}^{\underline{q},\Lambda}})$. Moreover, ${{B}_{n}^{\underline{q},\Lambda}}$ is Azumaya algebra over $Z({{B}_{n}^{\underline{q},\Lambda}})$. 
\end{theo}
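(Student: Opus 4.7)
The plan is to harvest the results already assembled above and match them against the standard characterization of the Azumaya locus. Recall that for a prime affine PI algebra $A$ over an algebraically closed field with PI degree $d$, the Azumaya locus $\mathcal{AL}(A)$ consists of those $\mathfrak{m}\in \mspect Z(A)$ whose associated simple modules have $\mathbb{K}$-dimension $d$, and equivalently, $A$ is Azumaya over $Z(A)$ precisely when $\mathcal{AL}(A)=\mspect Z(A)$.

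First I would establish the identification $\mathcal{AL}({{A}_{n}^{\underline{q},\Lambda}})=\mspect Z({{B}_{n}^{\underline{q},\Lambda}})$. By Theorem \ref{otm}, a simple ${{A}_{n}^{\underline{q},\Lambda}}$-module attains $\mathbb{K}$-dimension $\prod_{i=1}^n l_i$ if and only if it is $z_i$-torsionfree for every $i$. By Lemma \ref{itn}, this is exactly the condition that each $z_i$ acts as a unit, which by the universal property of Ore localization amounts to saying that the module extends (uniquely) to a simple module over ${{B}_{n}^{\underline{q},\Lambda}}={{A}_{n}^{\underline{q},\Lambda}}[S^{-1}]$. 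Translating via the central character $\chi_\mathfrak{m}$: a maximal ideal $\mathfrak{m}$ of $Z({{A}_{n}^{\underline{q},\Lambda}})$ lies in $\mathcal{AL}({{A}_{n}^{\underline{q},\Lambda}})$ exactly when $\chi_\mathfrak{m}(z_i^{l_i})\neq 0$ for all $i$, which (using Proposition \ref{zre} to see that each $z_i^{l_i}$ is in fact central) is the condition that $\mathfrak{m}$ survives the localization at the central multiplicative set $S_c$, i.e., corresponds to a maximal ideal of $Z({{B}_{n}^{\underline{q},\Lambda}})$.

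For the ``moreover'' clause, I would observe that by Corollary \ref{pisame} one has $\pideg({{B}_{n}^{\underline{q},\Lambda}})=\pideg({{A}_{n}^{\underline{q},\Lambda}})=\prod_{i=1}^n l_i$. Every simple ${{B}_{n}^{\underline{q},\Lambda}}$-module restricts along ${{A}_{n}^{\underline{q},\Lambda}}\hookrightarrow {{B}_{n}^{\underline{q},\Lambda}}$ to a simple ${{A}_{n}^{\underline{q},\Lambda}}$-module on which each $z_i$ is invertible, hence $z_i$-torsionfree for every $i$; Theorem \ref{otm} then forces its $\mathbb{K}$-dimension to be $\prod_{i=1}^n l_i=\pideg({{B}_{n}^{\underline{q},\Lambda}})$. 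Thus every simple ${{B}_{n}^{\underline{q},\Lambda}}$-module realises the PI degree, giving $\mathcal{AL}({{B}_{n}^{\underline{q},\Lambda}})=\mspect Z({{B}_{n}^{\underline{q},\Lambda}})$, which is the standard characterisation of a prime affine PI algebra over $\mathbb{K}$ being Azumaya over its centre.

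The proof is essentially a synthesis, so there is no serious new obstacle; the substantive work was done in Theorem \ref{otm}. The one checkpoint that warrants a brief verification is the canonical correspondence between simple ${{B}_{n}^{\underline{q},\Lambda}}$-modules and $z_i$-torsionfree simple ${{A}_{n}^{\underline{q},\Lambda}}$-modules, which follows routinely from the normality of the $z_i$ recorded in Proposition \ref{l1} together with the Ore-set argument underlying the construction of ${{B}_{n}^{\underline{q},\Lambda}}$.
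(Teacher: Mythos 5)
Your proposal is correct and follows essentially the same route as the paper: it uses Theorem \ref{otm} to identify the maximal-dimensional simple modules with the $z_i$-torsionfree ones (equivalently, the simple ${{B}_{n}^{\underline{q},\Lambda}}$-modules), translates the condition into $\chi_{\mathfrak{m}}(z_i^{l_i})\neq 0$ via Proposition \ref{zre}, identifies the resulting locus with $\mspect Z({{B}_{n}^{\underline{q},\Lambda}})$, and deduces the Azumaya property of ${{B}_{n}^{\underline{q},\Lambda}}$ from the equality of PI degrees given by Corollary \ref{pisame}. The only point you flag for verification — the correspondence between simple ${{B}_{n}^{\underline{q},\Lambda}}$-modules and $z_i$-torsionfree simple ${{A}_{n}^{\underline{q},\Lambda}}$-modules — is exactly the step the paper also invokes without further elaboration, so there is no gap relative to the paper's argument.
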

\section{Alternative Quantum Weyl Algebras}\label{altqwa}
Another family of multiparameter quantized Weyl algebras has been studied in the literature, including \cite{aj}. This family exhibits more symmetric defining relations compared to those of ${{A}_n^{\underline{q},\Lambda}}$. Given such $\Lambda$ and $\underline{q}$, the multiparameter quantized Weyl algebra ${\mathcal{A}_n^{\underline{q},\Lambda}}$ of symmetric type due to  M. Akhavizadegan and D. Jordan, called as {\it alternative quantum Weyl algebra}, is the algebra generated over the field $\mathbb{K}$ by the variables $x_1,y_1,\cdots,x_n,y_n$ subject to the following relations:
\begin{align*}
&y_iy_j=\lambda_{ij}y_jy_i \ \ \ \ \ \forall \ \ \ 1\leq i<j\leq n,\\
&x_ix_j=\lambda_{ij}x_jx_i \ \ \ \ \ \forall \ \ \ 1\leq i<j\leq n,\\
&x_iy_j=\lambda_{ij}^{-1}y_jx_i \ \ \ \ \ \forall \ \ \ 1\leq i<j\leq n,\\
&y_ix_j=\lambda_{ij}^{-1}x_jy_i \ \ \ \ \ \forall \ \ \ 1\leq i<j\leq n,\\
&x_iy_i-q_iy_ix_i=1\ \ \ \ \ \forall \ \ \ 1\leq i \leq n.
\end{align*}  
The algebra ${\mathcal{A}_n^{\underline{q},\Lambda}}$ has an iterated skew polynomial presentation with respect to the ordering of indices $y_1,x_1,\cdots,y_n,x_n$ of the form: 
$$\mathbb{K}[y_1][x_1,\tau_1,\delta_1][y_2,\sigma_2][x_2,\tau_2,\delta_2]\cdots [y_n,\sigma_n][x_n,\tau_n,\delta_n]$$
where the $\tau_j$ and $\sigma_{j}$ are $\mathbb{K}$-linear automorphisms and the $\delta_j$ are $\mathbb{K}$-linear $\tau_j$-derivations  such that
\begin{align*}
  \tau_j(y_i)&=\lambda_{ij}y_i,\ i<j & \sigma_j(y_i)&=\lambda_{ij}^{-1}y_i,\ i<j\\
  \tau_j(x_i)&=\lambda_{ij}^{-1}x_i,\ i<j& \sigma_j(x_i)&=\lambda_{ij}x_i,\ i<j\\
  \tau_j(y_j)&=q_jy_j,\ \forall~j & 
  \delta_j(x_i)&=\delta_j(y_i)=0,\ i<j\\
  \delta_j(y_j)&=1,\ \forall ~j&&
\end{align*}
Using the skew Hilbert basis theorem, we can conclude that the algebra ${\mathcal{A}_n^{\underline{q},\Lambda}}$ is an affine noetherian domain. Moreover, the family \[\{y_1^{a_1}x_1^{b_1}\cdots y_n^{a_n}x_n^{b_n}~:~a_i,b_i\in \mathbb{Z}_{\geq 0}\}\] is a $\mathbb{K}$-basis of ${\mathcal{A}_n^{\underline{q},\Lambda}}$. Similar to the quantum Weyl algebra, in the context of the alternative quantum Weyl algebra, we define $z_0=1$ and 
\[z_i=x_iy_i-y_ix_i\ \  \forall\ \  1\leq i\leq n.\] These elements will also play a crucial role in this section. It can be easily verified that for all $1\leq i\leq n$,
\[{z}_i=1+(q_i-1)y_ix_i=q_i^{-1}\left(1+(q_i-1)x_iy_i\right).\]
The following proposition is well known (cf. \cite{aj}) for the algebra ${\mathcal{A}_n^{\underline{q},\Lambda}}$ and can be proved using the defining relations.
\begin{prop} \label{l11}
Direct computations yield the following results for ${\mathcal{A}_n^{\underline{q},\Lambda}}$.
\begin{enumerate}
\item For all $1\leq i\leq n$, $z_i$ is a normal element of ${\mathcal{A}_n^{\underline{q},\Lambda}}$. More precisely, we have: 
\begin{itemize}
    \item [(a)] For all $i,j,~~1\leq i\neq j\leq n$, $z_ix_j=x_jz_i$ and $z_iy_j=y_jz_i$ 
    \item [(b)] For all $i,~~1\leq i\leq n$, $z_ix_i=q_i^{-1}x_iz_i$ and $z_iy_i=q_iy_iz_i$
    \item [(c)] For all $i,j$ with $1\leq i,j\leq n$, $z_iz_j=z_jz_i$.
\end{itemize}
\item For $k\geq 1$, the following identities hold in the algebra ${\mathcal{A}_n^{\underline{q},\Lambda}}$
\begin{itemize}
    \item[(a)] $x_i^ky_i=q_i^ky_ix_i^k+\left(1+q_i+\cdots+q_i^{k-1}\right)x_i^{k-1}$,
    \item[(b)] $x_iy_i^k=q_i^ky_i^kx_i+\left(1+q_i+\cdots+q_i^{k-1}\right)y_i^{k-1}$. 
\end{itemize}
\end{enumerate}
\end{prop}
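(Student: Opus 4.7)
The proof of Proposition \ref{l11} is entirely computational, relying on the defining relations of $\mathcal{A}_n^{\underline{q},\Lambda}$ together with the expressions $z_i = 1 + (q_i-1)y_ix_i = q_i^{-1}\bigl(1 + (q_i-1)x_iy_i\bigr)$. My plan is to handle parts (1a)--(1c) by direct cross-commutation computations using the $\lambda_{ij}$-twisted relations, and then to obtain part (2) by a routine induction on $k$ using only the local relation $x_iy_i - q_iy_ix_i = 1$.

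For part (1a), since $z_i = 1 + (q_i-1)y_ix_i$, it suffices to show that $y_ix_i$ commutes with $x_j$ and $y_j$ whenever $j \neq i$. I would split into cases $j > i$ and $j < i$ and in each case push $x_j$ (respectively $y_j$) past $y_i$ and then past $x_i$ using the four mixed relations; the scalars $\lambda_{ij}^{\pm 1}$ picked up in the two steps cancel, giving commutativity. Part (1b) is an internal computation at the single index $i$: using $x_iy_i = 1 + q_iy_ix_i$ one computes
\[
x_iz_i = x_i + (q_i-1)x_iy_ix_i = x_i + (q_i-1)(x_i + q_iy_ix_i^2) = q_i z_i x_i,
\]
which gives $z_ix_i = q_i^{-1}x_iz_i$; the identity $z_iy_i = q_iy_iz_i$ follows symmetrically from the alternative expression $z_i = q_i^{-1}(1 + (q_i-1)x_iy_i)$. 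Part (1c) is then immediate: for $i \neq j$ one uses (1a) (with the roles of the indices swapped) to see that $x_i$ and $y_i$ commute with $z_j$, so $z_iz_j = z_jz_i$ after substituting $z_i = 1 + (q_i-1)y_ix_i$.

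For part (2), I would induct on $k$. The base $k=1$ is the defining relation $x_iy_i = q_iy_ix_i + 1$. Assuming $x_i^ky_i = q_i^ky_ix_i^k + (1+q_i+\cdots+q_i^{k-1})x_i^{k-1}$, multiplying on the left by $x_i$ and applying the $k=1$ relation once to the factor $x_iy_i$ yields
\[
x_i^{k+1}y_i = q_i^{k+1}y_ix_i^{k+1} + q_i^k x_i^k + (1+q_i+\cdots+q_i^{k-1})x_i^k = q_i^{k+1}y_ix_i^{k+1} + (1+q_i+\cdots+q_i^k)x_i^k,
\]
completing the induction. Identity (2b) follows by a symmetric induction, now using the right-multiplication form $y_ix_i = q_i^{-1}(x_iy_i - 1)$ or equivalently $x_iy_i = 1 + q_iy_ix_i$ applied on the right.

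There is no real obstacle here; the only mild care needed is bookkeeping with the scalars $\lambda_{ij}^{\pm 1}$ in part (1a) to confirm that the two factors picked up when commuting past $y_i$ and past $x_i$ are mutually inverse, so that $y_ix_i$ is genuinely central with respect to $x_j$ and $y_j$ for $j \neq i$. Once this is verified, every other assertion of the proposition reduces to a short algebraic manipulation inside the single-index subalgebra generated by $x_i, y_i$.
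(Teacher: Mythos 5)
Your computations are correct and follow exactly the route the paper intends: the paper gives no written proof, simply asserting that the identities follow by direct computation from the defining relations (citing Akhavizadegan--Jordan), and your case-by-case verification of the $\lambda_{ij}^{\pm1}$ cancellations for (1a), the single-index manipulations for (1b)--(1c), and the induction on $k$ for (2) are precisely that computation carried out in full. Nothing is missing.
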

In this section, our objective is to establish
analogous results for ${\mathcal{A}_n^{\underline{q},\Lambda}}$ in parallel to those obtained for ${{A}_n^{\underline{q},\Lambda}}$. To begin, we derive the following necessary and sufficient conditions for the alternative quantum Weyl algebra to be a PI algebra.
\begin{prop}
The alternative quantum Weyl algebra ${\mathcal{A}_n^{\underline{q},\Lambda}}$ is a PI algebra if and only if the parameters $q_i$ and $\lambda_{ij}$ are roots of unity.
\end{prop}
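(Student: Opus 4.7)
The plan is to follow the same template as the proof of Proposition \ref{qwapia}, adapting the computations to the symmetric defining relations of ${\mathcal{A}_n^{\underline{q},\Lambda}}$. The structural backbone is: exhibit a central subalgebra over which ${\mathcal{A}_n^{\underline{q},\Lambda}}$ is finitely generated (for the forward direction) and pass to a $2$-generated subalgebra that is known to fail PI (for the converse).

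For the forward direction, assume every $q_i$ and every $\lambda_{ij}$ is a root of unity, and set $l := \lcmu(\ord(q_i),\ord(\lambda_{ij}) : 1\leq i,j\leq n)$. Let $Z$ be the subalgebra of ${\mathcal{A}_n^{\underline{q},\Lambda}}$ generated by $x_i^{l}, y_i^{l}$ for $1\leq i\leq n$. I would then verify directly from the defining relations that every generator commutes with each $x_i^l$ and $y_i^l$: the cross-index cases $i\neq j$ use that $\lambda_{ij}^{\pm l}=1$, while the same-index cases reduce, via Proposition \ref{l11}(2) with $k=l$, to checking that $q_i^l=1$ and that $1+q_i+\cdots+q_i^{l-1}=0$ (which holds since $q_i$ is a nontrivial root of unity whose order divides $l$). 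This puts $Z$ inside the center. The $\mathbb{K}$-basis of ordered monomials then shows ${\mathcal{A}_n^{\underline{q},\Lambda}}$ is generated as a $Z$-module by the finitely many monomials $y_1^{a_1}x_1^{b_1}\cdots y_n^{a_n}x_n^{b_n}$ with $0\leq a_i,b_i<l$. Proposition \ref{f} then yields that ${\mathcal{A}_n^{\underline{q},\Lambda}}$ is PI.

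For the converse, I would prove the contrapositive by locating a non-PI subalgebra. If some $\lambda_{ij}$ (with $i<j$) is not a root of unity, the $\mathbb{K}$-subalgebra of ${\mathcal{A}_n^{\underline{q},\Lambda}}$ generated by $x_i$ and $x_j$ satisfies only the single relation $x_ix_j=\lambda_{ij}x_jx_i$, hence is a quantum plane, which is not PI by \cite[Proposition I.14.2]{brg}. If some $q_i$ is not a root of unity, the subalgebra generated by $x_i$ and $y_i$ satisfies only $x_iy_i-q_iy_ix_i=1$, hence is the first quantum Weyl algebra $A_1^{q_i}$, which is not PI in that case (the defining parameter being a root of unity is necessary for $A_1^{q_i}$ to be PI). In either case Remark \ref{piquotient}(1) forces ${\mathcal{A}_n^{\underline{q},\Lambda}}$ to fail PI.

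The main obstacle is the centrality check at the same-index level: verifying that $x_i^l$ commutes with $y_i$ (and symmetrically $y_i^l$ with $x_i$) is where the derivation-like term from $x_iy_i-q_iy_ix_i=1$ intervenes and requires the vanishing of the $q_i$-integer $1+q_i+\cdots+q_i^{l-1}$. Everything else reduces to keeping track of $\lambda$-scalars, which disappear once raised to the $l$-th power, and invoking Proposition \ref{l11}. The converse is essentially immediate once the right $2$-generated subalgebras are identified.
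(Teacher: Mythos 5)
Your proposal is correct and is essentially the paper's proof: the paper simply states that the argument is parallel to Proposition \ref{qwapia}, using the defining relations of ${\mathcal{A}_n^{\underline{q},\Lambda}}$ and the identities of Proposition \ref{l11}, which is exactly what you carry out (central subalgebra generated by the $l$-th powers, finite module over it, Proposition \ref{f}). Your case split in the converse (a quantum plane on $x_i,x_j$ when some $\lambda_{ij}$ is not a root of unity, and $A_1^{q_i}$ when some $q_i$ is not) is precisely the appropriate adaptation of the converse step of Proposition \ref{qwapia} to the symmetric relations, so no genuinely different route is involved.
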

\begin{proof}
The argument is parallel to Proposition \ref{qwapia}, based on the defining relations of ${\mathcal{A}_n^{\underline{q},\Lambda}}$ and the identities in Proposition \ref{l11}.
\end{proof}
We denote by ${\mathcal{B}_n^{\underline{q},\Lambda}}$ the localization of ${\mathcal{A}_n^{\underline{q},\Lambda}}$ with respect to the Ore set generated by the normal elements $z_i,\ 1\leq i\leq n$. Now we recall a result from \cite{aj} which states that the localizations of both versions of quantum Weyl algebras are isomorphic. 
\begin{theo}\label{loiso}\emph{(\cite[1.7]{aj})}
The map $\theta:{\mathcal{B}_n^{\underline{q},\Lambda}}\longrightarrow{{B}_n^{\underline{q},\Lambda}}$ defined by \[\theta(y_i)=y_i, \theta(x_i)=z_{i-1}^{-1}x_i\] with $z_0=1$ is an isomorphism of $\mathbb{K}$-algebras.
\end{theo}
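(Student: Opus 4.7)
The plan is to prove Theorem \ref{loiso} in three steps: first, verify that the formulas for $\theta$ give a well-defined $\mathbb{K}$-algebra homomorphism from ${\mathcal{A}_n^{\underline{q},\Lambda}}$ into the localization ${B}_n^{\underline{q},\Lambda}$; second, observe that $\theta$ sends each normal element $z_i$ of the alternative algebra to a unit of ${B}_n^{\underline{q},\Lambda}$, so that $\theta$ extends to a homomorphism out of ${\mathcal{B}_n^{\underline{q},\Lambda}}$ by the universal property of localization; and third, construct an explicit inverse to conclude it is an isomorphism. To avoid notational collision, in this sketch I write $\tilde{x}_i, \tilde{y}_i, \tilde{z}_i$ for the generators and normal elements of ${\mathcal{A}_n^{\underline{q},\Lambda}}$, reserving $x_i, y_i, z_i$ for those of ${{A}_n^{\underline{q},\Lambda}}$.

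For the well-definedness, I would verify that the five families of defining relations of ${\mathcal{A}_n^{\underline{q},\Lambda}}$ are satisfied by the images $\theta(\tilde{y}_i) = y_i$ and $\theta(\tilde{x}_i) = z_{i-1}^{-1} x_i$. Each check reduces to commuting $z_{i-1}^{-1}$ past other generators using Proposition \ref{l1}: for $k \leq i-1$ this introduces a factor of $q_k^{\pm 1}$, while for $k \geq i$ the element $z_{i-1}$ commutes outright with both $x_k$ and $y_k$. These $q_k$-factors are precisely what absorb the extra $q_i$-factors appearing in the cross-relations of ${{A}_n^{\underline{q},\Lambda}}$ but not in those of ${\mathcal{A}_n^{\underline{q},\Lambda}}$. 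For example, $\tilde{x}_i \tilde{y}_i - q_i \tilde{y}_i \tilde{x}_i = 1$ becomes, using that $z_{i-1}$ commutes with $y_i$, the identity $z_{i-1}^{-1}(x_iy_i - q_iy_ix_i) = z_{i-1}^{-1} z_{i-1} = 1$. A direct computation then yields $\theta(\tilde{z}_i) = 1 + (q_i-1) y_i z_{i-1}^{-1} x_i = z_{i-1}^{-1} z_i$, which is a unit of ${B}_n^{\underline{q},\Lambda}$ since both $z_{i-1}$ and $z_i$ are inverted there; hence $\theta$ extends to a homomorphism ${\mathcal{B}_n^{\underline{q},\Lambda}} \to {B}_n^{\underline{q},\Lambda}$.

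To construct the inverse, I would iterate the identity $\theta(\tilde{z}_i) = z_{i-1}^{-1} z_i$ to get $z_i = \theta(\tilde{z}_1 \tilde{z}_2 \cdots \tilde{z}_i)$ (using $z_0 = 1$ and the fact that all $\tilde{z}_k$ commute), which suggests defining $\psi: {{A}_n^{\underline{q},\Lambda}} \to {\mathcal{B}_n^{\underline{q},\Lambda}}$ on generators by $\psi(y_i) = \tilde{y}_i$ and $\psi(x_i) = \tilde{z}_1 \tilde{z}_2 \cdots \tilde{z}_{i-1} \tilde{x}_i$. I would then verify $\psi$ respects the defining relations of ${{A}_n^{\underline{q},\Lambda}}$ using Proposition \ref{l11}. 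The Weyl-type relation is the most intricate: after commuting each $\tilde{z}_k$ past $\tilde{y}_i$ (which is free for $k \neq i$) one finds $\psi(x_iy_i - q_iy_ix_i) = \tilde{z}_1 \cdots \tilde{z}_{i-1}(\tilde{x}_i\tilde{y}_i - q_i\tilde{y}_i\tilde{x}_i) = \tilde{z}_1 \cdots \tilde{z}_{i-1}$, while the desired right-hand side $1 + \sum_{k=1}^{i-1}(q_k-1)\psi(y_k)\psi(x_k)$ telescopes to the same value via $(q_k-1)\tilde{y}_k\tilde{x}_k = \tilde{z}_k - 1$. Since $\psi(z_i) = \tilde{z}_1 \cdots \tilde{z}_i$ is a unit of ${\mathcal{B}_n^{\underline{q},\Lambda}}$, $\psi$ extends to a homomorphism out of ${{B}_n^{\underline{q},\Lambda}}$; a quick check on generators (using the same telescoping cancellation) confirms $\psi \circ \theta = \operatorname{id}$ and $\theta \circ \psi = \operatorname{id}$.

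The main obstacle will be the bookkeeping in the relation checks, particularly for the cross-relation $\tilde{y}_i \tilde{x}_j = \lambda_{ij}^{-1} \tilde{x}_j \tilde{y}_i$ with $i < j$, which requires commuting $y_i$ past $z_{j-1}^{-1}$: the resulting $q_i$-factor from Proposition \ref{l1}(1)(b) must exactly cancel the $q_i^{-1}$ that appears in the corresponding ${{A}_n^{\underline{q},\Lambda}}$ relation, leaving just the symmetric factor $\lambda_{ij}^{-1}$. Tracking all such scalar cancellations — across the five relations for $\theta$ and the five for $\psi$, along with the computations of $\theta(\tilde{z}_i)$ and $\psi(z_i)$ — is the principal technical labor; everything else is routine symbol manipulation.
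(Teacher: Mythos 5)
Your proposal is correct. Note that the paper itself gives no proof of this statement: it is quoted verbatim from Akhavizadegan--Jordan \cite[1.7]{aj}, so there is nothing internal to compare against. Your direct verification is exactly the expected argument — checking the five relation families for $\theta$ on generators (with the $q_i$-factors from Proposition \ref{l1}(1)(b) cancelling the asymmetric $q_i$'s in the relations of ${A_n^{\underline{q},\Lambda}}$), computing $\theta(\tilde z_i)=z_{i-1}^{-1}z_i$ to invoke the universal property of the Ore localization, and exhibiting the inverse $\psi(x_i)=\tilde z_1\cdots\tilde z_{i-1}\tilde x_i$, $\psi(y_i)=\tilde y_i$, whose Weyl-relation check telescopes via $(q_k-1)\tilde y_k\tilde x_k=\tilde z_k-1$ — and all the scalar bookkeeping you flag does work out as you describe.
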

The above isomorphism will play a crucial role to obtain the PI degree, maximal dimensional simple module, center, and the Azumaya locus for ${\mathcal{A}_n^{\underline{q},\Lambda}}$.
\subsection{PI degree for \texorpdfstring{${\mathcal{A}_n^{\underline{q},\Lambda}}$}{TEXT}} Here we wish to compute the PI degree of ${\mathcal{A}_n^{\underline{q},\Lambda}}$, taking into account the roots of unity assumption (\ref{asm}) concerning the defining multiparameters ${q_i}$ and $\lambda_{ij}$. In the roots of unity setting, it follows from Corollary \ref{pisame} that the algebras ${B}_n^{\underline{q},\Lambda}$ and $\mathcal{B}_n^{\underline{q},\Lambda}$ are PI algebras and \[\pideg ({{A}_n^{\underline{q},\Lambda}})=\pideg ({B}_n^{\underline{q},\Lambda}),\ \pideg ({\mathcal{A}_n^{\underline{q},\Lambda}})=\pideg (\mathcal{B}_n^{\underline{q},\Lambda}).\]
Therefore using the isomorphism in Theorem \ref{loiso}
we obtain \[\pideg ({A}_n^{\underline{q},\Lambda})=\pideg (\mathcal{A}_n^{\underline{q},\Lambda}).\]
Consequently, based on Theorem \ref{pitheorem}, we have now established that
\begin{theo}
The PI degree of ${A}_n^{\underline{q},\Lambda}$ is equal to the PI degree of ${\mathcal{B}_n^{\underline{q},\Lambda}}$, and both are given by
\begin{equation}\label{aqwapideg}
\pideg{\mathcal{A}_n^{\underline{q},\Lambda}}=\pideg {\mathcal{B}_n^{\underline{q},\Lambda}}=\prod\limits_{i=1}^nl_i.
\end{equation}
\end{theo}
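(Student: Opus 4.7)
The plan is to chain together three facts already established in the paper, reducing the theorem to the observation that the two non-isomorphic algebras $\mathcal{A}_n^{\underline{q},\Lambda}$ and $A_n^{\underline{q},\Lambda}$ share a common localization. Specifically, I would combine (i) invariance of the PI degree under the Ore localizations at hand, via Corollary \ref{pisame}; (ii) the explicit isomorphism $\theta:{\mathcal{B}_n^{\underline{q},\Lambda}}\to{B_n^{\underline{q},\Lambda}}$ supplied by Theorem \ref{loiso}; and (iii) the explicit formula $\pideg A_n^{\underline{q},\Lambda}=\prod_{i=1}^n l_i$ from Theorem \ref{pitheorem}. In principle one could instead compute $\pideg \mathcal{A}_n^{\underline{q},\Lambda}$ directly by running the derivation-erasing process of Proposition \ref{dr} on the iterated Ore presentation of $\mathcal{A}_n^{\underline{q},\Lambda}$ and then invoking Lemma \ref{mainpi}, but that is precisely the kind of $2n\times 2n$ block-matrix calculation the isomorphism $\theta$ is designed to bypass.

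First I would pass each algebra to its localization. For ${A_n^{\underline{q},\Lambda}}$, the $z_i$ are normal by Proposition \ref{l1}, so they generate an Ore set; moreover Proposition \ref{zre} together with Proposition \ref{center} shows that the powers $z_i^{l_i}$ are central, so inverting the $z_i$ coincides with inverting the central elements $z_i^{l_i}$. Hence ${B_n^{\underline{q},\Lambda}}$ is a subring of $\cf({A_n^{\underline{q},\Lambda}})$ containing ${A_n^{\underline{q},\Lambda}}$, and Corollary \ref{pisame} gives $\pideg {A_n^{\underline{q},\Lambda}} = \pideg {B_n^{\underline{q},\Lambda}}$. The parallel argument for $\mathcal{A}_n^{\underline{q},\Lambda}$ (the needed centrality of the $z_i^{l_i}$ in this algebra coming from Proposition \ref{l11}, via the same inductive identity used in Proposition \ref{zre}) yields $\pideg {\mathcal{A}_n^{\underline{q},\Lambda}} = \pideg {\mathcal{B}_n^{\underline{q},\Lambda}}$.

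Next, since PI degree is a $\mathbb{K}$-algebra invariant, the isomorphism $\theta$ of Theorem \ref{loiso} immediately gives $\pideg {\mathcal{B}_n^{\underline{q},\Lambda}} = \pideg {B_n^{\underline{q},\Lambda}}$. Concatenating these equalities produces $\pideg \mathcal{A}_n^{\underline{q},\Lambda} = \pideg \mathcal{B}_n^{\underline{q},\Lambda} = \pideg B_n^{\underline{q},\Lambda} = \pideg A_n^{\underline{q},\Lambda}$, and Theorem \ref{pitheorem} supplies the final value $\prod_{i=1}^n l_i$. Since the substantive work was absorbed into Theorems \ref{pitheorem} and \ref{loiso} (and, further back, into the derivation-erasing process of Proposition \ref{dr} and the De Concini--Procesi formula in Lemma \ref{mainpi}), there is no serious technical obstacle at this stage; the only point that needs a line of care is verifying that Corollary \ref{pisame} applies to the normal---not a priori central---localizations, which is handled by the centrality of the $z_i^{l_i}$.
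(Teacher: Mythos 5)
Your proposal is correct and follows essentially the same route as the paper: Corollary \ref{pisame} to pass from ${A}_n^{\underline{q},\Lambda}$ and ${\mathcal{A}_n^{\underline{q},\Lambda}}$ to their localizations, the isomorphism $\theta$ of Theorem \ref{loiso} to identify ${\mathcal{B}_n^{\underline{q},\Lambda}}$ with ${B}_n^{\underline{q},\Lambda}$, and Theorem \ref{pitheorem} for the value $\prod_{i=1}^n l_i$. Your extra care in checking that inverting the normal elements $z_i$ stays inside $\cf$ of each algebra (via centrality of the $z_i^{l_i}$) is a detail the paper leaves implicit, but it does not change the argument.
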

\begin{rema}\normalfont
It is worth noting that there exists a simple $\mathcal{A}_n^{\underline{q},\Lambda}$-module with $\mathbb{K}$-dimension $\prod\limits_{i=1}^nl_i$. Subsection \ref{maxaqwa} will present an explicit construction and classification of these simple modules.   
\end{rema}
\subsection{PI degree for Prime Factors of \texorpdfstring{${\mathcal{A}_n^{\underline{q},\Lambda}}$}{TEXT}}
Fix an index $r$ with $1\leq r\leq n$. Then the ideal generated by the normal element $z_r:=x_ry_r-y_rx_r$ is a completely prime ideal of ${\mathcal{A}_n^{\underline{q},\Lambda}}$ (see \cite[Proposition 4.5]{aj}) and hence the factor algebra ${\mathcal{A}_n^{\underline{q},\Lambda}}/\langle z_r \rangle$ is a domain. Note that the algebra ${\mathcal{A}_n^{\underline{q},\Lambda}}/\langle z_r \rangle$ is a prime affine PI algebra under the root of unity assumption (\ref{asm}). Here we wish to compute the PI degree of the factor algebra ${\mathcal{A}_n^{\underline{q},\Lambda}}/\langle z_r \rangle$ explicitly.\\
\textbf{Step 1:} First we consider an algebra that is similar to Step 1 in Section \ref{sr}. Let $\mathcal{R}_r$ denote the $\mathbb{K}$-algebra generated by the variables $X_1,Y_1,\cdots,X_n,Y_n$ subject to the relations:
\begin{align}\label{qwadr2}
Y_iY_j&=\lambda_{ij}Y_jY_i \ \ \ \ \ \forall \ \ \ 1\leq i<j\leq n\nonumber\\
X_iX_j&=\lambda_{ij}X_jX_i \ \ \ \ \ \forall \ \ \ 1\leq i<j\leq n\nonumber\\
X_iY_j&=\lambda_{ij}^{-1}Y_jX_i\ \ \ \ \ \forall \ \ \ 1\leq i<j\leq n\\
Y_iX_j&=\lambda_{ij}^{-1}X_jY_i \ \ \ \ \ \forall \ \ \ 1\leq i<j\leq n\nonumber\\
X_rY_r&=Y_rX_r\nonumber\\
X_iY_i&-q_iY_iX_i=1 \ \ \ \ \ \forall \ \ \ 1\leq i\neq r\leq n\nonumber
\end{align}
In the root of unity setting (\ref{asm}), the algebra $\mathcal{R}_r$ becomes a PI algebra. The following result provides a connection between the algebras ${\mathcal{A}_n^{\underline{q},\Lambda}}$ and $\mathcal{R}_{r}$.
\begin{lemm}\label{cpfqwa1}
The factor algebra ${\mathcal{A}_n^{\underline{q},\Lambda}}/\langle z_r \rangle$ is isomorphic to the factor algebra $\mathcal{R}_{r}/\mathcal{I}_{r}$, where $\mathcal{I}_r$ is the ideal of $\mathcal{R}_r$ generated by $1+(q_r-1)Y_rX_r$. 
\end{lemm}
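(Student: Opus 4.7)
The plan is to proceed exactly in parallel with the proof of Lemma \ref{cpfqwa}, by constructing mutually inverse algebra homomorphisms between the two quotient algebras and invoking the universal property of the quotient.

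First I would define a $\mathbb{K}$-linear map $\Phi \colon {\mathcal{A}_n^{\underline{q},\Lambda}} \longrightarrow \mathcal{R}_{r}/\mathcal{I}_{r}$ on the generators by $x_i \mapsto X_i + \mathcal{I}_r$ and $y_i \mapsto Y_i + \mathcal{I}_r$. To see this extends to an algebra homomorphism I would verify that every defining relation of ${\mathcal{A}_n^{\underline{q},\Lambda}}$ is satisfied in $\mathcal{R}_r/\mathcal{I}_r$. The only nontrivial relation is $x_ry_r - q_r y_r x_r = 1$; in $\mathcal{R}_r$ we have $X_r Y_r = Y_r X_r$, so $X_r Y_r - q_r Y_r X_r = (1-q_r) Y_r X_r$, and modulo $\mathcal{I}_r$ the defining generator $1 + (q_r-1) Y_r X_r$ vanishes, giving $(1-q_r) Y_r X_r \equiv 1$. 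All other relations are routine.

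Next I would check that $z_r = x_r y_r - y_r x_r \in \kere(\Phi)$, which is immediate from $X_r Y_r = Y_r X_r$ in $\mathcal{R}_r$. The universal property of the quotient then yields a unique algebra homomorphism $\overline{\Phi} \colon {\mathcal{A}_n^{\underline{q},\Lambda}}/\langle z_r \rangle \longrightarrow \mathcal{R}_r/\mathcal{I}_r$. For the reverse direction I would define $\Psi \colon \mathcal{R}_r \longrightarrow {\mathcal{A}_n^{\underline{q},\Lambda}}/\langle z_r \rangle$ by $X_i \mapsto x_i + \langle z_r\rangle$ and $Y_i \mapsto y_i + \langle z_r\rangle$. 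The verification that $\Psi$ respects the relations of $\mathcal{R}_r$ is straightforward, and the key point is to observe that in ${\mathcal{A}_n^{\underline{q},\Lambda}}$ the identity $x_r y_r - q_r y_r x_r = 1$ combined with $z_r = x_r y_r - y_r x_r$ yields
\[
1 + (q_r - 1) y_r x_r = z_r,
\]
so that $\mathcal{I}_r \subseteq \kere(\Psi)$. Hence $\Psi$ factors through $\mathcal{R}_r/\mathcal{I}_r$ to give $\overline{\Psi} \colon \mathcal{R}_r/\mathcal{I}_r \longrightarrow {\mathcal{A}_n^{\underline{q},\Lambda}}/\langle z_r \rangle$.

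Finally, the compositions $\overline{\Phi}\circ\overline{\Psi}$ and $\overline{\Psi}\circ\overline{\Phi}$ fix the images of the generators, so they are the respective identity maps, proving that $\overline{\Phi}$ and $\overline{\Psi}$ are mutually inverse isomorphisms. The only genuinely nontrivial step, and the one worth flagging, is the compatibility check centered on the identity $1 + (q_r-1) y_r x_r = z_r$ in ${\mathcal{A}_n^{\underline{q},\Lambda}}$, which is the bridge that lets the single defining generator of $\mathcal{I}_r$ match up with the normal element $z_r$ generating the ideal on the other side; everything else is bookkeeping with the universal property.
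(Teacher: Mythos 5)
Your proposal is correct and follows essentially the same route as the paper, which proves this lemma exactly as it proves Lemma \ref{cpfqwa}: construct the two maps on generators, verify the relations (the only nontrivial point being the identity $z_r=1+(q_r-1)y_rx_r$ in ${\mathcal{A}_n^{\underline{q},\Lambda}}$, which you correctly isolate), and conclude by the universal property that the induced maps are mutually inverse.
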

The proof of this lemma follows a similar approach as that of Lemma \ref{cpfqwa}. As a result, based on Remark \ref{piquotient}, we can conclude that 
\begin{equation}\label{noteq1}
    \pideg ({\mathcal{A}_n^{\underline{q},\Lambda}}/\langle z_r \rangle)=\pideg{(\mathcal{R}_{r}/\mathcal{I}_{r})}\leq \pideg{(\mathcal{R}_r)}.
 \end{equation}  
\noindent\textbf{Step 2:} Our objective now is to determine the PI degree of the algebra $\mathcal{R}_r$. By following the same reasoning outlined in Step 2 of Section \ref{sr}, we can establish that \[\pideg(\mathcal{R}_r)=\prod\limits_{\substack{i=1\\i\neq r}}^{n} l_i.\] Consequently, using (\ref{noteq1}), we obtain the inequality
\begin{equation}\label{leqaaq}
\pideg ({\mathcal{A}_n^{\underline{q},\Lambda}}/\langle z_r \rangle)\leq 
\displaystyle\prod\limits_{\substack{i=1\\i\neq r}}^{n} l_i.
\end{equation}
\noindent\textbf{Step 3:} Finally we aim to establish the equality in (\ref{leqaaq}). It is important to note that the alternative quantum Weyl algebra ${\mathcal{A}_{n-1}^{\underline{q}(r),\Lambda(r)}}$ can be viewed as a subalgebra of the factor algebra ${\mathcal{A}_n^{\underline{q},\Lambda}}/\langle z_r \rangle$. Here $\underline{q}(r)$ is the $(n-1)$-tuple elements of $\mathbb{K}\setminus\{0,1\}$ obtained from $\underline{q}$ by removing the $r$-th element and $\Lambda(r)$ is the $(n-1)\times (n-1)$ multiplicatively antisymmetric matrix obtained by simultaneously removing the $r$-th row and column from $\Lambda$. Thus from Remark \ref{piquotient} we have:
\begin{equation}
\pideg ({\mathcal{A}_{n-1}^{\underline{q}(r),\Lambda(r)}})\leq \pideg({\mathcal{A}_n^{\underline{q},\Lambda}}/\langle z_r \rangle)
\end{equation}
As a consequence of (\ref{aqwapideg}), we can establish that 
\begin{equation}
\pideg ({\mathcal{A}_{n-1}^{\underline{q}(r),\Lambda(r)}})=\displaystyle\prod\limits_{\substack{i=1\\i\neq r}}^{n} l_i.
\end{equation}
Consequently, we have established the following
\begin{theo}\label{itsnew}
Assuming the root of unity assumption (\ref{asm}) on the defining multiparameters, the PI degree of a prime factor of $\mathcal{A}_n^{\underline{q},\Lambda}$ by the normal element $z_r$ is given by $\displaystyle\prod\limits_{\substack{i=1\\i\neq r}}^{n} l_i.$
\end{theo}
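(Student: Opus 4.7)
The plan is to assemble Theorem \ref{itsnew} by sandwiching $\pideg({\mathcal{A}_n^{\underline{q},\Lambda}}/\langle z_r \rangle)$ between two quantities that both evaluate to $\prod_{i\neq r}l_i$, using the three preparatory steps already laid out. Nothing new is needed beyond combining the pieces: inequality (\ref{leqaaq}) already supplies the upper bound, and Step $3$ supplies the lower bound via an embedded subalgebra.

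For the upper bound, I would invoke Lemma \ref{cpfqwa1}, which identifies ${\mathcal{A}_n^{\underline{q},\Lambda}}/\langle z_r\rangle$ with the quotient $\mathcal{R}_r/\mathcal{I}_r$. By Remark \ref{piquotient}(2), passing to a prime quotient cannot increase the PI degree, so $\pideg(\mathcal{R}_r/\mathcal{I}_r)\le \pideg(\mathcal{R}_r)$. The PI degree of $\mathcal{R}_r$ itself was computed in Step $2$ via the derivation-erasing process of Proposition \ref{dr} followed by the De Concini--Procesi / invariant-factor calculation of Proposition \ref{quan} and Lemma \ref{mainpi}, yielding $\pideg(\mathcal{R}_r)=\prod_{i\neq r}l_i$. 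Chaining these gives (\ref{leqaaq}).

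For the lower bound, I would exploit the fact that the alternative quantum Weyl algebra ${\mathcal{A}_{n-1}^{\underline{q}(r),\Lambda(r)}}$ in the $2(n-1)$ variables $\{x_i,y_i:i\neq r\}$ sits inside ${\mathcal{A}_n^{\underline{q},\Lambda}}/\langle z_r\rangle$ as a subalgebra. The defining relations of ${\mathcal{A}_{n-1}^{\underline{q}(r),\Lambda(r)}}$ are precisely those of ${\mathcal{A}_n^{\underline{q},\Lambda}}$ restricted to the indices $i\neq r$, and since none of these relations involve $z_r$, a PBW-basis comparison (using the $\mathbb{K}$-basis $\{y_1^{a_1}x_1^{b_1}\cdots y_n^{a_n}x_n^{b_n}\}$ displayed earlier) shows the canonical map is injective. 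Remark \ref{piquotient}(1) then delivers $\pideg({\mathcal{A}_{n-1}^{\underline{q}(r),\Lambda(r)}})\le \pideg({\mathcal{A}_n^{\underline{q},\Lambda}}/\langle z_r\rangle)$. By applying the already-proved PI-degree formula (\ref{aqwapideg}) to the $(n-1)$-variable alternative quantum Weyl algebra (whose relevant parameters $l_i$ for $i\neq r$ still satisfy the divisibility condition of (\ref{asm})), we obtain $\prod_{i\neq r}l_i\le \pideg({\mathcal{A}_n^{\underline{q},\Lambda}}/\langle z_r\rangle)$.

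Combining the two inequalities gives the desired equality. The main obstacle has in fact already been overcome in Step $2$: carrying out the correct sequence of row/column operations on the $2n\times 2n$ integral matrix $C'$ so that the $r$-th block collapses to the zero block while all other blocks reduce to the skew-normal diagonal form $\diagonal\bigl(0,-s_ik_i;\,s_ik_i,0\bigr)$. Once that calculation is in hand, the proof of Theorem \ref{itsnew} reduces to the short combination of (\ref{leqaaq}) with the subalgebra embedding just described.
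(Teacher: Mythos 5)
Your proposal is correct and follows essentially the same route as the paper: the upper bound via Lemma \ref{cpfqwa1}, Remark \ref{piquotient} and the computation $\pideg(\mathcal{R}_r)=\prod_{i\neq r}l_i$, and the lower bound via the embedded subalgebra ${\mathcal{A}_{n-1}^{\underline{q}(r),\Lambda(r)}}$ together with the formula (\ref{aqwapideg}). The only addition you make is an explicit PBW-basis justification of the embedding, which the paper leaves implicit, and your passing reference to the matrix $C'$ should strictly be to its analogue for $\mathcal{R}_r$ in the alternative (symmetric) relations, but the reduction is the same.
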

\noindent\textbf{Question:} In view of Proposition \ref{sim}, there exists a $z_r$-torsion simple ${\mathcal{A}_n^{\underline{q},\Lambda}}$-module with $\mathbb{K}$-dimension $\prod\limits_{\substack{i=1\\i\neq r}}^{n} l_i$ for each $1\leq r\leq n$. It would be interesting to explore the construction of such simple modules and subsequently classify them. So let us give it a try!
\subsection{Classification of Maximal Dimensional Simple Modules}\label{maxaqwa}
Here we classify all maximal dimensional simple ${\mathcal{A}_n^{\underline{q},\Lambda}}$-modules. As a direct consequence of Theorem \ref{loiso}, we can derive the following result, establishing a significant connection between the simple modules of both versions of quantum Weyl algebras.
\begin{prop}\label{conn}
There is a a one-to-one correspondence between the simple $z_i$-torsionfree ${{A}_n^{\underline{q},\Lambda}}$-modules and the simple $z_i$-torsionfree ${\mathcal{A}_n^{\underline{q},\Lambda}}$-modules.
\end{prop}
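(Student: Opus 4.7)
The plan is to leverage the isomorphism $\theta:{\mathcal{B}_n^{\underline{q},\Lambda}}\longrightarrow {{B}_n^{\underline{q},\Lambda}}$ from Theorem \ref{loiso} together with the standard correspondence between simple torsionfree modules over a prime ring and simple modules over its Ore localization. The backbone of the argument is: a simple module is $z_i$-torsionfree for every $i$ precisely when it extends (uniquely) to a module over the localization obtained by inverting all the $z_i$'s.

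First I would invoke Lemma \ref{itn}: for a simple ${{A}_n^{\underline{q},\Lambda}}$-module $M$ and each normal element $z_i$, the action of $z_i$ is either zero or invertible. Hence $M$ is $z_i$-torsionfree for all $i\in[1,n]$ if and only if every $z_i$ acts invertibly on $M$. Under this condition, the ${{A}_n^{\underline{q},\Lambda}}$-action extends uniquely to a ${{B}_n^{\underline{q},\Lambda}}$-action, and simplicity is preserved in both directions (any ${{A}_n^{\underline{q},\Lambda}}$-submodule of such an $M$ is automatically closed under $z_i^{-1}$, hence a ${{B}_n^{\underline{q},\Lambda}}$-submodule). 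This produces a bijection between simple $z_i$-torsionfree ${{A}_n^{\underline{q},\Lambda}}$-modules and simple ${{B}_n^{\underline{q},\Lambda}}$-modules. The identical reasoning, applied using Proposition \ref{l11} in place of Proposition \ref{l1}, yields a bijection between simple $z_i$-torsionfree ${\mathcal{A}_n^{\underline{q},\Lambda}}$-modules and simple ${\mathcal{B}_n^{\underline{q},\Lambda}}$-modules.

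Second, I would use the isomorphism $\theta$ of Theorem \ref{loiso} to transport simple modules: pullback along $\theta$ sets up a bijection between simple ${{B}_n^{\underline{q},\Lambda}}$-modules and simple ${\mathcal{B}_n^{\underline{q},\Lambda}}$-modules. Composing the three bijections
\[
\bigl\{\text{simple }z_i\text{-torsionfree }{{A}_n^{\underline{q},\Lambda}}\text{-mod}\bigr\}
\;\longleftrightarrow\;
\bigl\{\text{simple }{{B}_n^{\underline{q},\Lambda}}\text{-mod}\bigr\}
\;\stackrel{\theta^{*}}{\longleftrightarrow}\;
\bigl\{\text{simple }{\mathcal{B}_n^{\underline{q},\Lambda}}\text{-mod}\bigr\}
\;\longleftrightarrow\;
\bigl\{\text{simple }z_i\text{-torsionfree }{\mathcal{A}_n^{\underline{q},\Lambda}}\text{-mod}\bigr\}
\]
produces the desired one-to-one correspondence, where isomorphism classes are matched on either side.

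There is no real obstacle here; the only point requiring a moment of care is verifying that extension/restriction along the Ore localization preserves simplicity, which is immediate once one notes that the $z_i$ act invertibly on the simple module in question so that every submodule over the unlocalized algebra is already stable under the inverses. I would state this verification once (for ${{A}_n^{\underline{q},\Lambda}}\subset {{B}_n^{\underline{q},\Lambda}}$) and remark that the analogous statement for ${\mathcal{A}_n^{\underline{q},\Lambda}}\subset {\mathcal{B}_n^{\underline{q},\Lambda}}$ follows \emph{mutatis mutandis}, leaving Theorem \ref{loiso} to handle the final identification.
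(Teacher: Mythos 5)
Your argument is correct and is essentially the paper's own: the paper simply asserts the proposition as a direct consequence of Theorem \ref{loiso}, and your chain of bijections (simple $z_i$-torsionfree modules $\leftrightarrow$ simple modules over the localizations ${{B}_n^{\underline{q},\Lambda}}$, ${\mathcal{B}_n^{\underline{q},\Lambda}}$, transported through $\theta^{*}$) is precisely the standard localization correspondence that makes this explicit. The one point you gloss --- that an ${{A}_n^{\underline{q},\Lambda}}$-submodule of a simple localized module is stable under $z_i^{-1}$ --- really uses finite-dimensionality of the simple modules (available here since the algebras are prime affine PI), not invertibility alone, but in this setting that is a cosmetic remark rather than a gap.
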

In light of this proposition along with Theorem \ref{ztorsim}, we can classify simple $z_i$-torsionfree $\mathcal{A}_n^{\underline{q},\Lambda}$-modules. In particular, each $z_i$-torsionfree for all $1\leq i\leq n$ simple ${\mathcal{A}_n^{\underline{q},\Lambda}}$-module is isomorphic to a simple ${{A}_n^{\underline{q},\Lambda}}$-module $M\left(\underline{\mu}(I,J),\underline{\gamma}(I)\right)$. Moreover, the $\mathbb{K}$-dimension of each simple $z_i$-torsionfree ${\mathcal{A}_n^{\underline{q},\Lambda}}$-module is equal to $\pideg{\mathcal{A}_n^{\underline{q},\Lambda}}$ which is maximal. 
\par Next, we aim to prove that each maximal dimensional (i.e., equal to the $\pideg {\mathcal{A}_n^{\underline{q},\Lambda}}$) simple ${\mathcal{A}_n^{\underline{q},\Lambda}}$-module is $z_i$-torsionfree for all $1\leq i\leq n$. If possible let $M$ be a maximal dimensional simple ${\mathcal{A}_n^{\underline{q},\Lambda}}$-module which is $z_r$-torsion for some $1\leq r\leq n$. As $z_r$ is a normal element then $M$ becomes a simple module over the factor algebra ${\mathcal{A}_n^{\underline{q},\Lambda}}/\langle z_r \rangle$. Hence by Proposition \ref{sim} along with Theorem \ref{itsnew}, we obtain \[\dime_{\mathbb{K}}(M)\leq \pideg({\mathcal{A}_n^{\underline{q},\Lambda}}/\langle z_r \rangle)<\pideg ({\mathcal{A}_n^{\underline{q},\Lambda}}).\] This is a contradiction. 
\par Consequently, we have achieved a comprehensive classification of maximal dimensional simple ${\mathcal{A}_n^{\underline{q},\Lambda}}$-modules. In particular, we have established the following. 
\begin{theo}\label{aqmd}
A simple ${\mathcal{A}_n^{\underline{q},\Lambda}}$-module $M$ is $z_i$-torsionfree for all $1\leq i\leq n$ if and only if it is maximal dimensional, i.e., $\dime_{\mathbb{K}}(M)=\prod\limits_{i=1}^nl_i$.
\end{theo}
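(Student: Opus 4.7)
The plan is to prove the two implications of this biconditional separately, in both cases leveraging the machinery developed earlier in the section. The biconditional has a purely internal character: it does not require constructing new modules for $\mathcal{A}_n^{\underline{q},\Lambda}$ from scratch, because the torsionfree side is already handled by the correspondence with ${A}_n^{\underline{q},\Lambda}$.

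For the ``torsionfree implies maximal dimensional'' direction, I would take a simple $\mathcal{A}_n^{\underline{q},\Lambda}$-module $M$ that is $z_i$-torsionfree for every $i$ and use Proposition \ref{conn} to transfer it to a simple $z_i$-torsionfree ${A}_n^{\underline{q},\Lambda}$-module. By Theorem \ref{ztorsim} this transferred module is isomorphic to some $M(\underline{\mu}(I,J),\underline{\gamma}(I))$, which by the construction in Section \ref{intqwa} has $\mathbb{K}$-dimension exactly $\prod_{i=1}^n l_i$. Since the correspondence in Proposition \ref{conn} preserves $\mathbb{K}$-dimension (it is induced by the localization isomorphism $\theta$ of Theorem \ref{loiso}, which is the identity on vector space structure after localization), the original module $M$ has the same dimension, which coincides with $\pideg \mathcal{A}_n^{\underline{q},\Lambda}$ by Theorem (\ref{aqwapideg}).

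For the converse, I would argue by contradiction, exactly mirroring the proof of Theorem \ref{onepart} for ${A}_n^{\underline{q},\Lambda}$. Suppose $M$ is maximal dimensional but $z_r$-torsion for some $1\le r\le n$. By Lemma \ref{itn}, the normality of $z_r$ forces $Mz_r=0$, so $M$ descends to a simple module over the prime affine PI algebra $\mathcal{A}_n^{\underline{q},\Lambda}/\langle z_r\rangle$. Applying Proposition \ref{sim} to this quotient and invoking Theorem \ref{itsnew}, one obtains
\[
\dime_{\mathbb{K}}(M)\leq \pideg(\mathcal{A}_n^{\underline{q},\Lambda}/\langle z_r\rangle)=\prod\limits_{\substack{i=1\\i\neq r}}^{n}l_i<\prod\limits_{i=1}^{n}l_i=\pideg \mathcal{A}_n^{\underline{q},\Lambda},
\]
contradicting maximality. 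Hence $M$ must be $z_i$-torsionfree for every $i$.

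There is no genuine obstacle left at this stage; the entire argument is a synthesis of ingredients already in place. If anything, the subtle point to double-check is that Proposition \ref{conn} truly preserves $\mathbb{K}$-dimension of simple modules in both directions, so that dimension computations for the two algebras can be interchanged freely. Once this is confirmed, the statement of Theorem \ref{aqmd} follows by stitching together the two directions above, yielding the desired classification in complete parallel to Theorem \ref{otm}.
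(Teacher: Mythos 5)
Your proposal is correct and follows essentially the same route as the paper: the torsionfree-implies-maximal direction via Proposition \ref{conn} and Theorem \ref{ztorsim} (with the dimension carried over through the localization isomorphism $\theta$), and the converse by contradiction using Proposition \ref{sim} together with Theorem \ref{itsnew}, in parallel with Theorem \ref{onepart}. The one point you flag — that the correspondence of Proposition \ref{conn} preserves $\mathbb{K}$-dimension — is indeed the only implicit step, and it holds because a $z_i$-torsionfree simple module and its extension to the common localization share the same underlying vector space.
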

\subsection{The Center and Azumaya Locus for \texorpdfstring{${\mathcal{A}_n^{\underline{q},\Lambda}}$}{TEXT}}
In this subsection, we describe $Z({\mathcal{A}_n^{\underline{q},\Lambda}})$ and determine the Azumaya locus. By the isomorphism $\theta$ in Theorem \ref{loiso}, we have 
\begin{equation}\label{calo}
Z({\mathcal{B}_n^{\underline{q},\Lambda}})=\theta^{-1}(Z({{B}_n^{\underline{q},\Lambda}}))=\theta^{-1}(\mathbb{K}[x_1^{l_1},y_1^{l_1},\cdots,x_n^{l_n},y_n^{l_n}][S_c^{-1}]).
\end{equation}
From the definition of $\theta$, it follows that 
\[\theta^{-1}(x_i^{l_i})=(z_1\cdots z_{i-1}x_i)^{l_i}, \ \theta^{-1}(y_i^{l_i})=y_i^{l_i},\ \theta^{-1}(z_i^{l_i})=(z_1\cdots z_{i-1}z_i)^{l_i}\]
and $\mathcal{S}_c:=\theta^{-1}(S_c)$ is also a multiplicative set in ${\mathcal{A}_n^{\underline{q},\Lambda}}$ contained in the multiplicative set $T_c$ generated by the $z_i^{l_i},\ 1\leq i\leq n$. Now applying $\theta^{-1}$ in the identity of Proposition \ref{zre}, we obtain the following identity for the central element $z_i^{l_i}$ in $\mathcal{A}_n^{\underline{q},\Lambda}$
\begin{equation}\label{id2a}
z_i^{l_i}=1+q_i^{l_i(l_i-1)/2}(q_i-1)^{l_i}y_i^{l_i}x_i^{l_i}, \ \ 1\leq i\leq n
\end{equation}
Therefore from (\ref{calo}),
\begin{align*}
Z({\mathcal{B}_n^{\underline{q},\Lambda}})&=\mathbb{K}[\theta^{-1}(x_i^{l_i}),\theta^{-1}(y_i^{l_i}):1\leq i\leq n][\mathcal{S}_c^{-1}]\\
&=\mathbb{K}[x_i^{l_i},y_i^{l_i}:1\leq i\leq n][T_c^{-1}].
\end{align*}
Thus the center of $\mathcal{A}_n^{\underline{q},\Lambda}$ is 
\[Z(\mathcal{A}_n^{\underline{q},\Lambda})=Z({\mathcal{B}_n^{\underline{q},\Lambda}})\cap \mathcal{A}_n^{\underline{q},\Lambda}=\mathbb{K}[x_i^{l_i},y_i^{l_i}:1\leq i\leq n].\]
As $\mathbb{K}$ is algebraically closed, the maximal ideals of $Z({\mathcal{A}_{n}^{\underline{q},\Lambda}})$ are of the form \[\mathfrak{m}:=\langle x_i^{l_i}-\alpha_i,y_i^{l_i}-\beta_i: 1\leq i\leq n\rangle\] for some scalars $\alpha_i,\beta_i\in\mathbb{K}$. Suppose \[\chi_{\mathfrak{m}}:Z({\mathcal{A}_{n}^{\underline{q},\Lambda}})\rightarrow \mathbb{K}\] is the central character corresponding to $\mathfrak{m}\in Z({\mathcal{A}_{n}^{\underline{q},\Lambda}})$. Then $\chi_{\mathfrak{m}}(x_i^{l_i})=\alpha_i$ and $\chi_{\mathfrak{m}}(y_i^{l_i})=\beta_i$. Hence from (\ref{id2a}), we can write
\[\chi_{\mathfrak{m}}(z_i^{l_i})=1+q_i^{l_i(l_i-1)/2}(q_i-1)^{l_i}\beta_i\alpha_i\ \ \text{for}\ \ 1\leq i\leq n.\] By Theorem \ref{aqmd}, a simple ${\mathcal{A}_{n}^{\underline{q},\Lambda}}$-module $N$ is maximal dimensional if and only if the action of each $z_i$ on $N$ is invertible. Consequently, the Azumaya locus of ${\mathcal{A}_{n}^{\underline{q},\Lambda}}$ is given by 
\begin{align*}
\mathcal{AL}\left({\mathcal{A}_{n}^{\underline{q},\Lambda}}\right)&=\{\mathfrak{m}\in \mspect Z({\mathcal{A}_{n}^{\underline{q},\Lambda}}):\chi_{\mathfrak{m}}(z_i^{l_i})\neq 0,~\forall~1\leq i\leq n\}\\
&=\mspect Z({\mathcal{B}_{n}^{\underline{q},\Lambda}}).
\end{align*}
Thus we have established the following
\begin{theo}
The center of $\mathcal{A}_n^{\underline{q},\Lambda}$ is 
$\mathbb{K}[x_i^{l_i},y_i^{l_i}:1\leq i\leq n]$ and the Azumaya locus of ${\mathcal{A}_{n}^{\underline{q},\Lambda}}$ is $\mspect Z({\mathcal{B}_{n}^{\underline{q},\Lambda}})$. Moreover, ${\mathcal{B}_{n}^{\underline{q},\Lambda}}$ is Azumaya algebra over $Z({\mathcal{B}_{n}^{\underline{q},\Lambda}})$. 
\end{theo}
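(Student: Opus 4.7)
The strategy is to exploit the isomorphism $\theta\colon \mathcal{B}_n^{\underline{q},\Lambda}\to B_n^{\underline{q},\Lambda}$ of Theorem \ref{loiso} to transport the already-known center of $B_n^{\underline{q},\Lambda}$ back to $\mathcal{B}_n^{\underline{q},\Lambda}$, then intersect with $\mathcal{A}_n^{\underline{q},\Lambda}$ to recover its center. Once the center is in hand, Theorem \ref{aqmd} (maximal-dimensional simple modules are exactly the $z_i$-torsionfree simples) translates the question of identifying the Azumaya locus into the question of when each $z_i$ acts invertibly, which is precisely captured by the central characters of the localized algebra $\mathcal{B}_n^{\underline{q},\Lambda}$.

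For the center computation, I would start from Proposition \ref{center}, which together with Corollary \ref{pisame} gives
\[Z(B_n^{\underline{q},\Lambda})=\mathbb{K}[x_1^{l_1},y_1^{l_1},\ldots,x_n^{l_n},y_n^{l_n}][S_c^{-1}],\]
with $S_c$ generated by $z_i^{l_i}$. Applying $\theta^{-1}$ componentwise, and noting that $\theta^{-1}(y_i^{l_i})=y_i^{l_i}$ and $\theta^{-1}(x_i^{l_i})=(z_1\cdots z_{i-1}x_i)^{l_i}$, yields $Z(\mathcal{B}_n^{\underline{q},\Lambda})$ generated (as a localization) by these images. The key simplification comes from pulling back Proposition \ref{zre} via $\theta^{-1}$ to obtain identity (\ref{id2a}): $z_i^{l_i}=1+q_i^{l_i(l_i-1)/2}(q_i-1)^{l_i}y_i^{l_i}x_i^{l_i}$. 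This identity lets one rewrite $\theta^{-1}(x_i^{l_i})$ entirely in terms of $x_i^{l_i}$ and $y_i^{l_i}$, so that
\[Z(\mathcal{B}_n^{\underline{q},\Lambda})=\mathbb{K}[x_i^{l_i},y_i^{l_i}:1\le i\le n][T_c^{-1}],\]
where $T_c$ is generated by the $z_i^{l_i}$. Intersecting with $\mathcal{A}_n^{\underline{q},\Lambda}$ then collapses to the unlocalized polynomial algebra, giving $Z(\mathcal{A}_n^{\underline{q},\Lambda})=\mathbb{K}[x_i^{l_i},y_i^{l_i}:1\le i\le n]$.

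For the Azumaya locus, since $\mathbb{K}$ is algebraically closed the maximal ideals of the center have the form $\mathfrak{m}=\langle x_i^{l_i}-\alpha_i,\ y_i^{l_i}-\beta_i\rangle$ with central character $\chi_\mathfrak{m}$. By Theorem \ref{aqmd}, a simple module is maximal-dimensional precisely when each normal element $z_i$ acts invertibly, which happens iff $\chi_\mathfrak{m}(z_i^{l_i})\neq 0$ for all $i$; using (\ref{id2a}) this becomes $1+q_i^{l_i(l_i-1)/2}(q_i-1)^{l_i}\alpha_i\beta_i\neq 0$. These characters are exactly the ones that extend to $Z(\mathcal{B}_n^{\underline{q},\Lambda})$, so the Azumaya locus is $\mspect Z(\mathcal{B}_n^{\underline{q},\Lambda})$. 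For the final Azumaya assertion, every simple $\mathcal{B}_n^{\underline{q},\Lambda}$-module is, by construction, a $z_i$-torsionfree simple $\mathcal{A}_n^{\underline{q},\Lambda}$-module and therefore attains $\mathbb{K}$-dimension $\prod l_i=\pideg \mathcal{B}_n^{\underline{q},\Lambda}$; a prime affine PI algebra whose simple modules all achieve PI degree is Azumaya over its center.

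The step I expect to require the most care is the intersection $Z(\mathcal{B}_n^{\underline{q},\Lambda})\cap \mathcal{A}_n^{\underline{q},\Lambda}=\mathbb{K}[x_i^{l_i},y_i^{l_i}]$. The inclusion $\supseteq$ is immediate from the defining relations and Proposition \ref{l11}, but the reverse inclusion needs a genuine argument: an arbitrary element of the localized polynomial ring that happens to lie in $\mathcal{A}_n^{\underline{q},\Lambda}$ must already have cleared denominators. Because each $z_i^{l_i}$ equals $1+c_i y_i^{l_i}x_i^{l_i}$ with $c_i\in\mathbb{K}^*$, these are distinguished non-units in the polynomial subring, and one can use the PBW-type $\mathbb{K}$-basis $\{y_1^{a_1}x_1^{b_1}\cdots y_n^{a_n}x_n^{b_n}\}$ of $\mathcal{A}_n^{\underline{q},\Lambda}$ to show no such denominators can survive — alternatively, one can run an inductive argument on $n$ parallel to the proof of Proposition \ref{center}, exploiting that $\mathcal{A}_n^{\underline{q},\Lambda}$ has the same iterated Ore presentation with only the relation between $x_i$ and $y_i$ changed. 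Either route is essentially a bookkeeping exercise, but it is the one substantive verification the proof relies on.
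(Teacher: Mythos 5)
Your proposal is correct and follows essentially the same route as the paper: transporting $Z(B_n^{\underline{q},\Lambda})$ through $\theta^{-1}$, simplifying via the pulled-back identity $z_i^{l_i}=1+q_i^{l_i(l_i-1)/2}(q_i-1)^{l_i}y_i^{l_i}x_i^{l_i}$, intersecting with $\mathcal{A}_n^{\underline{q},\Lambda}$, and then reading off the Azumaya locus from Theorem \ref{aqmd} via central characters. The only cosmetic difference is at the very end, where you invoke the Artin--Procesi-type criterion (all simples of $\mathcal{B}_n^{\underline{q},\Lambda}$ attain the PI degree) directly, whereas the paper notes the Azumaya property of $\mathcal{B}_n^{\underline{q},\Lambda}$ can equally be deduced from the isomorphism $\theta$ together with Theorem \ref{qwaal}; the two justifications are equivalent in substance.
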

Finally, it is worth mentioning that the Azumaya property of the algebra ${\mathcal{B}_{n}^{\underline{q},\Lambda}}$ can also be established by utilizing the isomorphism presented in Theorem \ref{loiso} along with Theorem \ref{qwaal}.
\section{Concluding Remarks} In conclusion, this chapter provides significant contributions to the study of multiparameter quantized algebras at roots of unity. We have achieved an explicit expression for the PI degree and the center, as well as a comprehensive classification of the maximal dimensional simple modules. Consequently, we have obtained the Azumaya locus of the multiparameter quantized algebras at roots of unity.
\par It is worth noting that recent studies by Brown and Yakimov \cite{bry} have shed light on the connection between the Azumaya locus $\mathcal{AL}(R)$ and the discriminant ideal $D_{l}$ of a prime PI algebra $R$. Specifically, in their main theorem \cite[Main Theorem]{bry}, they established that the zero set of $D_{l}$ is the complement of the Azumaya locus of $R$, in the case when $l$ is the square of the PI degree of $R$. As an application, the classification of the Azumaya locus of the multiparameter quantized algebras at roots of unity was obtained in \cite[Theorem 6.2]{bry} by determining its discriminant ideal.
\par Thus, in particular, this chapter provides an alternative approach to determine the Azumaya locus of the multiparameter quantized Weyl algebras without explicitly finding its discriminant ideal.

\section*{Acknowledgements}
The authors are grateful to the National Board of Higher Mathematics, Department of Atomic Energy, Government of India for providing financial support to carry out this research.

\end{document}